\newcommand{\CM}{Cohen-Macaulay}
\newcommand{\wrt}{with respect to}
\newcommand{\n}{\mathfrak{n} }
\newcommand{\m}{\mathfrak{m} }
\newcommand{\M}{\mathfrak{M} }
\newcommand{\A}{\mathfrak{a} }
\newcommand{\R}{\mathcal{R} }
\newcommand{\fp}{\mathfrak{p}}
\newcommand{\fP}{\mathfrak{P}}
\newcommand{\F}{\mathcal{F} }
\newcommand{\eR}{\widehat{\mathcal{R} }}
\newcommand{\Z}{\mathbb{Z} }
\newcommand{\fb}{\mathbf{f} }
\newcommand{\rt}{\rightarrow}
\newcommand{\ov}{\overline}
\newcommand{\wh}{\widehat }
\newcommand{\wt}{\widetilde }
\newcommand{\image}{\operatorname{image}}
\newcommand{\rank}{\operatorname{rank}}
\newcommand{\grade}{\operatorname{grade}}
\newcommand{\Ass}{\operatorname{Ass}}
\newcommand{\Sing}{\operatorname{Sing}}
\newcommand{\Proj}{\operatorname{Proj}}
\newcommand{\Spec}{\operatorname{Spec}}
\newcommand{\embdim}{\operatorname{embdim}}
\newcommand{\projdim}{\operatorname{projdim}}
\newcommand{\height}{\operatorname{height}}
\newcommand{\Fitt}{\operatorname{Fitt}}
\newcommand{\Ext}{\operatorname{Ext}}
\newcommand{\Tor}{\operatorname{Tor}}
\theoremstyle{plain}
\newtheorem{theorem}{Theorem}[section]
\newtheorem{corollary}[theorem]{Corollary}
\newtheorem{lemma}[theorem]{Lemma}
\newtheorem{proposition}[theorem]{Proposition}
\theoremstyle{definition}
\newtheorem{remark}[theorem]{Remark}
\newtheorem{example}[theorem]{Example}
\theoremstyle{remark}
\begin{document}

\title[Andr\'e-Quillen homology]{Andr\'e-Quillen homology of Rees algebras and extended Rees algebras}
\author{Tony~J.~Puthenpurakal}
\date{\today}
\address{Department of Mathematics, IIT Bombay, Powai, Mumbai 400 076, India}

\email{tputhen@gmail.com}
\subjclass{Primary  13D03, 13A30; Secondary 13H10, 14M10}
\keywords{ Andr\'e-Quillen homology, complete intersections, Proj of a graded ring, Koszul homology}

 \begin{abstract}
Let $(A,\m)$ be an excellent  local complete intersection ring and let \\ $I = (a_1, \ldots, a_r)$ be an ideal of positive height. Let $\R(I) = A[It]$ be the Rees algebra of $I$. Consider the map $\psi \colon S = A[X_1, \ldots, X_r]  \rt \R(I)$ which maps $X_i \rt a_it$ for all $i$. Let $J = \ker \psi$ and let $H_*(J)$ be the Koszul homology of $J$.  We prove that the following assertions are equivalent:
\begin{enumerate}[\rm (i)]
  \item $\Proj \R(I)$ is a complete intersection.
  \item \begin{enumerate}[\rm (a)]
          \item  $D_3(\R(I)|A, \R(I))_n = 0$ for $n \gg 0$.
          \item For $P \in \Proj \R(I)$ we have $H_1(J)_P$ is a free $\R(I)_P$-module.
        \end{enumerate}
\end{enumerate}
Here $D_3(\R(I)|A, \R(I))$ is the third Andr\'e-Quillen homology of $\R(I)$ with respect to $A \rt \R(I)$. We prove an analogous result for the extended Rees algebra $\eR(I) = A[It, t^{-1}]$.
When $A$ is a \CM \ domain (not necessarily a complete intersection) we compute that rank of $H_1(J)$ and hence compute its free locus.

\end{abstract}
 \maketitle
\section{introduction}
\s \label{setup} Let $(A,\m)$ be a  \CM \  local ring and let $I = (a_1, \ldots, a_r)$ (minimally) be an ideal.  Assume $\height I \geq 1$. Let $\R(I) = A[It]$ be the Rees algebra of $I$. Consider the map $\epsilon \colon S = A[X_1, \ldots, X_r]  \rt \R(I)$ which maps $X_i \rt a_it$ for all $i$. Let $J = \ker \epsilon$. We give the standard grading to $S$. The ideal $J$ is called the defining ideal of the Rees algebra of $I$ and has been extensively studied.   Analogously
let $\eR(I) = A[It, t^{-1}]$ be the extended Rees algebra of $I$. Consider the map $\wh{\epsilon} \colon \wh{S} = A[X_1, \ldots, X_r, T]  \rt \eR(I)$ which maps $X_i \rt a_it$ for all $i$ and $T$ is mapped to $t^{-1}$. Let $\wh{J} = \ker \wh{\epsilon}$. We give the following  grading to $\wh{S}$; set $\deg A = 0$, $\deg X_i = 1$ for all $i$ and $\deg T = -1$.
The ideal $\wh{J}$ is the defining ideal of the extended Rees algebra of $I$. It can be shown that
$\wh{J} = J\wh{S} + (TX_i -a_i \mid 1\leq i \leq r)$, see \cite[5.5.7]{HS}.

Let $H_*(J)$ ($H_*(\wh{J})$) denote the Koszul homology of $J$  (respectively of $\wh{J}$). In this paper we investigate some Koszul homology of $J$ ( respectively of $\wh{J} )$ and decode its impact on properties of $\R(I)$ (respectively of $\eR(I)$).  The main tool for this analysis is the  Andr\'e-Quillen homology $D_*(\R(I)|A, -)$ and $D_*(\eR(I)|A, -)$. We note that Andr\'e-Quillen homology has been earlier studied in the context of Rees algebras by  Andr\'e  \cite{A-m} and by Planas-Vilanova \cite{Pl-96}, \cite{Pl-15}, \cite{Pl-21} and \cite{Pl-22}.

If $T = \bigoplus_{n \in \Z}T_n$ is a graded ring then by $T_+$ we denote the ideal generated by $\bigoplus_{n \geq 1}T_n$.  By $\Proj(T)$ we mean the scheme consisting of homogeneous prime ideals $P$ with $P \nsupseteq T_+$. Andr\'e-Quillen homology is quite effective when dealing with complete intersections. We show

\begin{theorem}\label{proj-ci}
Let $(A,\m)$ be a local complete intersection and let $I$ be an ideal of positive height.
Assume either $A$ is excellent or $A$ is a quotient of a regular local ring.
The following assertions are equivalent:
\begin{enumerate}[\rm (i)]
  \item $\Proj \R(I)$ is a complete intersection.
  \item $\Proj \eR(I)$ is a complete intersection.
  \item \begin{enumerate}[\rm (a)]
          \item $D_3(\R(I)|A, \R(I))_n = 0$ for $n \gg 0$.
          \item   $H_1(J)_P$ is a free $\R(I)_P$-module for  every $P \in \Proj(\R(I))$.
        \end{enumerate}
  \item \begin{enumerate}[\rm (a)]
          \item $D_3(\eR(I)|A, \eR(I))_n = 0$ for $n \gg 0$.
          \item   $H_1(\wh{J})_P$ is a free $\eR(I)_P$-module for every $P \in \Proj(\eR(I))$,.
        \end{enumerate}
\end{enumerate}
\end{theorem}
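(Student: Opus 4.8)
The plan is to prove everything by reducing to a pointwise statement on $\Proj$ and then analysing a single localisation with Andr\'e--Quillen and Koszul homology. First I would dispose of (i) $\Leftrightarrow$ (ii): on the chart $D_+(a_it)$ both $\R(I)$ and $\eR(I)$ have homogeneous (degree-zero) localisation equal to $A[a_1/a_i,\dots,a_r/a_i]$, and since $\R(I)_+ = \eR(I)_+$ these charts cover both $\Proj$'s compatibly; hence $\Proj \R(I) \cong \Proj \eR(I)$ as schemes, and (i) $\Leftrightarrow$ (ii) is immediate, being an intrinsic property of this common scheme. This isomorphism also reduces (iv) to (iii), so I would concentrate on (i) $\Leftrightarrow$ (iii); the extended case then runs verbatim with $\wh{J} = J\wh{S} + (TX_i - a_i)$ in place of $J$, or follows formally.

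Next I would make condition (iii)(a) pointwise. Because $A \rt \R(I)$ is a graded homomorphism essentially of finite type and $\R(I)$ is Noetherian, $D_3(\R(I)|A,\R(I))$ is a finitely generated graded $\R(I)$-module, so $D_3(\R(I)|A,\R(I))_n = 0$ for $n \gg 0$ if and only if $D_3(\R(I)|A,\R(I))_P = 0$ for every $P \in \Proj \R(I)$. Thus (i), (iii)(a) and (iii)(b) all become conditions ranging over $P \in \Proj$, and it suffices to prove, for each such $P$ with $\fp = P \cap A$, that $\R(I)_P$ is a complete intersection if and only if $D_3(\R(I)|A,\R(I))_P = 0$ and $H_1(J)_P$ is $\R(I)_P$-free. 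Localising and using that $A \rt S = A[X_1,\dots,X_r]$ is smooth, the Jacobi--Zariski sequence gives $D_n(\R(I)|A,-)_P \cong D_n(\R(I)|S,-)_P$ for $n \geq 2$, so I may work with the surjection $S_\fP \rt \R(I)_P = S_\fP/J_\fP$, where $\fP$ is the contraction of $P$ to $S$.

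The heart of the argument is a local criterion for this surjection. Using that $A$ is a complete intersection together with the hypothesis that $A$ is excellent or a quotient of a regular ring, I would choose (after completion) a regular local ring $\wt{S}$ with $S_\fP = \wt{S}/(\underline g)$ for a regular sequence $\underline g$, so that $\R(I)_P = \wt{S}/\wt{J}$. Over the regular ring $\wt{S}$, $\R(I)_P$ is a complete intersection if and only if $D_2(\R(I)_P|\wt{S},-)$ vanishes, equivalently the first Koszul homology $H_1(\wt{J})$ vanishes. Since $S_\fP$ is a complete intersection over $\wt{S}$, the Jacobi--Zariski sequence of $\wt{S} \rt S_\fP \rt \R(I)_P$ yields $D_n(\R(I)_P|\wt{S},\R(I)_P) \cong D_n(\R(I)_P|S_\fP,\R(I)_P)$ for $n \geq 3$ and a four-term exact sequence in low degrees; in particular $D_3(\R(I)|A,\R(I))_P$ is identified with the ``absolute'' homology $D_3(\R(I)_P|\wt{S},\R(I)_P)$. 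Finally, via the Lichtenbaum--Schlessinger description of the cotangent complex of a surjection, I would identify $H_1(J)_P$ with the first Koszul-homology term governing $D_2(\R(I)_P|S_\fP,-)$, so that its freeness pins down the decomposable part of the homology exactly as in the complete-intersection case. Feeding freeness of $H_1(J)_P$ into the four-term sequence and the identification of $D_3$, the vanishing of $D_3(\R(I)|A,\R(I))_P$ forces $H_1(\wt{J}) = 0$, i.e. $\R(I)_P$ is a complete intersection; the converse is the easier ascent direction, since a complete intersection $\R(I)_P$ makes all $D_n(\R(I)_P|\wt{S},-)$ vanish for $n \geq 2$ (giving (a)) and makes $H_1(J)_P$ free (giving (b)).

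I expect the main obstacle to be the coefficient module. Condition (iii)(a) only controls $D_3$ with coefficients in $\R(I)$ itself, which is strictly weaker than control with residue-field coefficients, so Avramov's rigidity (a single higher vanishing forcing the complete-intersection property) does not apply directly; the freeness hypothesis (iii)(b) is precisely the substitute that makes the homological bookkeeping collapse to the complete-intersection case. This also explains the appearance of $D_3$ rather than $D_2$: since $H_1(J)_P$ is in general a nonzero free module even when $\R(I)_P$ is a complete intersection, $D_2(\R(I)_P|S_\fP,\R(I)_P)$ need not vanish, so the obstruction is pushed one step up to $D_3$. Making the implication ``$H_1(J)_P$ free and $D_3 = 0$ $\Rightarrow$ $H_1(\wt{J}) = 0$'' precise --- tracking the exterior-algebra structure on the Koszul homology through the four-term sequence --- is the step I would expect to require the most care.
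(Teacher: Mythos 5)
Your reduction of the whole theorem to a pointwise statement at each $P \in \Proj\R(I)$ is where the argument breaks down. The step you defer to the end --- ``$H_1(J)_P$ free and $D_3(\R(I)|A,\R(I))_P=0$ force $\R(I)_P$ to be a complete intersection, by tracking the exterior-algebra structure through the four-term sequence'' --- is the entire content of the hard implication, and it is not a statement that can be settled by local Koszul bookkeeping over your auxiliary regular ring $\wt{S}$. Every available criterion for the complete intersection property (rigidity of Andr\'e--Quillen homology, \cite[8.7]{I}, followed by \cite[1.4]{Av}) requires $D_3(\R(I)_P|A,\kappa(P))=0$, i.e.\ vanishing with \emph{residue-field} coefficients, and the passage from coefficients in $\R(I)_P$ to coefficients in $\kappa(P)$ is exactly what your hypotheses do not give locally: one must first kill $D_2(\R(I)|A,\R(I))$ away from $V(\R(I)_+)$ and control $D_2$ with coefficients in $E=\ker(\R(I)\rt\R(I)/P)$. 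In the paper this is done by irreducibly global, graded means: freeness of $H_1(\wh J)_P$ at \emph{all} $P\in\Proj$ gives $\Ass H_1(\wh J)\subseteq \Ass\eR(I)\cup V(\eR(I)_+)$, whence Theorem \ref{second} (whose proof rests on the $t^{-1}$-torsion Lemma \ref{l} and the unmixedness of Rees algebras, Theorem \ref{unmixed}) yields $D_2(\eR(I)|A,\eR(I))_n=0$ for $n\gg0$; and $E$ is recognized as an extended Rees module of an $I$-stable filtration (\cite[3.1]{PTJ-2}) so that $D_2(\eR(I)|A,E)$ is $\eR(I)_+$-torsion. Only then does the sequence $D_3(\eR(I)|A,\eR(I))\rt D_3(\eR(I)|A,\eR(I)/P)\rt D_2(\eR(I)|A,E)$ localize to give $D_3(\eR(I)_P|A,\kappa(P))=0$. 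A single localization $\R(I)_P$ carries none of the $t^{-1}$/grading structure, so your plan has no mechanism for either killing $D_2$ or changing coefficients; note that $D_2(R|\bar S,R)=\ker(H_1(J)\rt R^s)$ need not vanish even when $H_1(J)$ is free, so the four-term sequence alone cannot close the loop.

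Two secondary points. First, (i) $\Rightarrow$ (iii)(b) is not the ``easier ascent direction'': $S_{\fP}$ is only a complete intersection, not regular, so Gulliksen's criterion does not apply and freeness of $H_1(J)_P$ for a complete intersection quotient is a genuine theorem; the paper again proves it globally (Claim-1 in its proof, via Rees modules of filtrations and the $t^{-1}$-torsion argument), and a purely local proof would need the quasi-complete-intersection machinery, which you do not invoke. Second, the equivalence of (iii) and (iv) is not a formal consequence of $\Proj\R(I)\cong\Proj\eR(I)$: comparing $D_3(\R(I)|A,\R(I))_n$ with $D_3(\eR(I)|A,\eR(I))_n$ for $n\gg0$ is Lemma \ref{lemm}, which again uses the Jacobi--Zariski sequence for $A\rt\R(I)\rt\eR(I)$ and the graded structure, and the two freeness conditions concern different Koszul homologies ($H_1(J)$ versus $H_1(\wh J)$).
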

Bountiful examples of rings satisfying the assumptions of Theorem \ref{proj-ci} arise from Hironaka's resolution of singularities, see \ref{ex-1}.

\begin{remark}
We consider $0$ to be a free module over the ambient ring.
\end{remark}

\s \label{herzog} We note that $D_j(\R(I)|A, -) = D_j(\R(I)|S, -)$ for $j \geq 2$ and $D_j(\eR(I)|A, -) = D_j(\eR(I)|\wh{S},-)$ for $j \geq 2$. If $A$ contains a field of characteristic zero then we have an exact sequence (see \cite[4.6]{H})
$$ D_4(\R(I)|S, \R(I)) \rt      \bigwedge^2H_1(J) \rt H_2(J) \rt    D_3(\R(I)|S, \R(I)) \rt 0,$$
and an exact sequence
$$ D_4(\eR(I)|\wh{S}, \eR(I)) \rt      \bigwedge^2H_1(\wh{J}) \rt H_2(\wh{J}) \rt    D_3(\eR(I)|\wh{S}, \eR(I)) \rt 0.$$
Here $\bigwedge^2H_1(-) \rt H_2(-)$ is the natural multipliction map induced in the algebra $H_*(-)$.
Thus the condition $D_3(\R(I)|A, \R(I))_n = 0$ for $n \gg 0$ (respectively $D_3(\eR(I)|A, \eR(I))_n = 0$ for $n \gg 0$) can be made entirely in terms of Koszul homology of $J$ (and $\wh{J}$ respectively).

In view of Theorem \ref{proj-ci} we might wonder what happens when $\R(I)$ or $\eR(I)$ is a complete intersection. There is a paucity of examples when $\R(I)$ is a complete intersection. However there are bountiful examples of $\eR(I)$ being a complete intersection, see \cite[1.5]{PTJ}. Our result is

\begin{theorem}\label{ex-2-thm} Let $(A, \m)$ be a local complete intersection and let $I$ be an ideal of height $ \geq 1$.
  The following assertions are equivalent:
  \begin{enumerate}[\rm (i)]
    \item $\eR(I)$ is a complete intersection.
    \item $H_1(\wh{J})$ is a free $\eR(I)$-module and $D_3(\eR(I)|A, \eR(I)) = 0$.
  \end{enumerate}
\end{theorem}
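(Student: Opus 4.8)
The plan is to reduce the whole statement to the single local ring obtained by localizing at the homogeneous maximal ideal $\M$ of $\eR(I)$, and there to recognize condition (ii) as the assertion that the surjection $\wh{S}\rt\eR(I)$ is a \emph{quasi-complete intersection} (q.c.i.). Since $\eR(I)$ is a finitely generated graded algebra whose homogeneous primes are all contained in $\M$, the ring $\eR(I)$ is a complete intersection \ff\ $\eR(I)_{\M}$ is, and the finitely generated graded modules $H_1(\wh{J})$ and $D_3(\eR(I)|A,\eR(I))$ are respectively free and zero \ff\ they are so after localizing at $\M$ (graded Nakayama). Thus I would pass to the local maps $A\rt R':=\wh{S}_{\mathfrak N}\rt B':=\eR(I)_{\M}$, where $\mathfrak N$ is the homogeneous maximal ideal of $\wh{S}$, noting that $D_n(B'|A,-)=D_n(B'|R',-)$ for $n\ge 2$ by \ref{herzog} and that $R'$ is a complete intersection because $A$ is and $A\rt\wh{S}$ is smooth. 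It then suffices to prove that (i) $\Leftrightarrow$ [$\wh{S}\rt\eR(I)$ is q.c.i.] $\Leftrightarrow$ (ii).

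The engine for the first equivalence is transitivity. After completing, write $\wh{R'}=Q/(\mathbf g)$ with $Q$ regular and $\mathbf g$ a regular sequence, so that $Q\rt\wh{R'}$ is complete intersection and $D_n(\wh{R'}|Q,-)=0$ for $n\ge 2$. The Jacobi--Zariski sequence for $Q\rt\wh{R'}\rt\wh{B'}$ then collapses to isomorphisms
\[
D_n(\wh{B'}|Q,-)\;\cong\;D_n(\wh{B'}|\wh{R'},-)\qquad (n\ge 3).
\]
If $B'$ is a complete intersection then $Q\rt\wh{B'}$ is complete intersection, so $D_n(\wh{B'}|Q,-)=0$ for $n\ge 2$, and the display forces $D_n(B'|R',-)=0$ for $n\ge 3$, i.e.\ $\wh{S}\rt\eR(I)$ is q.c.i. at $\M$. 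Conversely, q.c.i.\ gives $D_n(\wh{B'}|Q,-)=0$ for $n\ge 3$; since $Q$ is regular, $\wh{B'}$ has finite flat dimension over $Q$, and Avramov's theorem resolving Quillen's conjecture upgrades this to the statement that $Q\rt\wh{B'}$ is complete intersection, whence $B'$, and therefore $\eR(I)$, is a complete intersection.

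It remains to match condition (ii) with the q.c.i.\ property. One direction is immediate: a quasi-complete intersection map has $D_n(\eR(I)|\wh{S},-)=0$ for $n\ge 3$, so $D_3(\eR(I)|A,\eR(I))=0$, and its Koszul homology $H_*(\wh{J})$ is the exterior algebra over $\eR(I)$ on the free module $H_1(\wh{J})$. For the converse I would feed $D_3(\eR(I)|A,\eR(I))=D_3(\eR(I)|\wh{S},\eR(I))=0$ into the exact sequence of \ref{herzog}, which makes the natural multiplication $\bigwedge^2 H_1(\wh{J})\rt H_2(\wh{J})$ surjective; combining this with freeness of $H_1(\wh{J})$ and the characterization of quasi-complete intersection homomorphisms (Avramov--Henriques--\c{S}ega), one promotes the comparison $\bigwedge^{*}H_1(\wh{J})\rt H_*(\wh{J})$ to an isomorphism, that is, $\wh{S}\rt\eR(I)$ is q.c.i.

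The main obstacle I expect is exactly this last promotion: the sequence of \ref{herzog} only controls $D_3$, $D_4$, $H_2$ and $\bigwedge^2 H_1$, so passing from ``$H_1(\wh{J})$ free and $\bigwedge^2 H_1\twoheadrightarrow H_2$'' to the vanishing of \emph{all} higher André--Quillen homology (equivalently the full exterior-algebra structure of $H_*(\wh{J})$) requires the rigidity of André--Quillen homology rather than the bare exact sequence, and in the stated generality one must also supply the characteristic-zero input underlying \ref{herzog}, or argue directly from the intrinsic quasi-complete intersection criterion. By contrast the transitivity bookkeeping and the appeal to Quillen's conjecture over the regular ring $Q$ are formal once the q.c.i.\ property is in hand; I would also note that $\eR(I)$ being a complete intersection forces $\Proj\eR(I)$ to be one, so Theorem \ref{proj-ci} runs in parallel and can be used to cross-check the behaviour away from $\M$.
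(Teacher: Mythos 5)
Your reduction to the local ring at the homogeneous maximal ideal and your treatment of (i) $\Leftrightarrow$ [q.c.i.] are fine, and the direction (i) $\implies$ (ii) would go through once the q.c.i.\ property is established (the paper gets there differently, via \ref{g-l} and the fact that $\M$ is an extended Rees module, which yields $\projdim_{\eR(I)}\wh{J}/\wh{J}^2\le 1$ and hence freeness of $H_1(\wh{J})$). But the direction (ii) $\implies$ [q.c.i.], which you yourself flag as the obstacle, is a genuine gap, and the specific reason is one your proposal does not isolate: hypothesis (ii) gives the vanishing of $D_3(\eR(I)|A,\eR(I))$, i.e.\ Andr\'e--Quillen homology with coefficients in the \emph{ring}, whereas both the Avramov--Henriques--\c{S}ega criterion and Avramov's theorem over $Q$ require vanishing with coefficients in the \emph{residue field} (indeed $D_3(\eR(I)|A,k)=0=D_4(\eR(I)|A,k)$ for q.c.i., and $D_3(\eR(I)|Q,k)=0$ for the complete intersection conclusion). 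There is no formal implication from $D_3(-,\eR(I))=0$ to $D_3(-,k)=0$. Your proposed bridge via the sequence of \ref{herzog} fails on two counts: that sequence is only available in characteristic zero, while the theorem carries no characteristic hypothesis; and even granting it, it only produces surjectivity of $\bigwedge^2H_1(\wh{J})\rt H_2(\wh{J})$, which is far from injectivity there or from any control in higher homological degrees.

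The paper closes exactly this coefficient gap, and the mechanism is worth recording because it is where the freeness hypothesis on $H_1(\wh{J})$ actually earns its keep. Freeness makes $H_1(\wh{J})$ unmixed and equi-dimensional, so Theorem \ref{first}(II) gives $D_2(\eR(I)|A,\eR(I))=0$; hence the complex $0\rt H_1(\wh{J})\rt\eR(I)^s\rt\wh{J}/\wh{J}^2\rt 0$ is exact and, $H_1(\wh{J})$ being free, $\projdim_{\eR(I)}\wh{J}/\wh{J}^2\le 1$. This yields $D_2(\eR(I)|A,\M)=\Tor_1^{\eR(I)}(\M,\wh{J}/\wh{J}^2)=\Tor_2^{\eR(I)}(k,\wh{J}/\wh{J}^2)=0$, and then the long exact sequence of $0\rt\M\rt\eR(I)\rt k\rt 0$ sandwiches $D_3(\eR(I)|A,k)$ between $D_3(\eR(I)|A,\eR(I))=0$ and $D_2(\eR(I)|A,\M)=0$. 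From there the Jacobi--Zariski sequence for $Q\rt A\rt\eR(I)$ and Avramov's theorem finish, exactly as in your outline. So your architecture is salvageable, but only after inserting this passage from ring coefficients to residue-field coefficients; without it (or some substitute for it) the proof does not go through, and the appeal to the q.c.i.\ characterization is not yet applicable.
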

In view of the above results it is necessary to understand the free locus of $H_1(J)$ and $H_1(\wh{J})$. When $T$ is local (or *-local) for a finitely generated module $M$ (graded if $T$ is *-local) we denote the number of minimal generators of $M$ by $\mu(M)$.
When $A$ is a domain we prove:
\begin{theorem}\label{rank}
Let $(A,\m)$ be a \CM \ local domain of dimension $d \geq 1$ and let $I$ be a non-zero ideal.
Then we have
\begin{enumerate}[\rm (1)]
\item
$\rank H_1(\wh{J}) = \mu(\wh{J}) - \mu(I)$.
\item
$\rank H_1({J}) = \mu({J}) - \mu(I) + 1$.
\end{enumerate}
\end{theorem}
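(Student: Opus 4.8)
The plan is to compute both ranks at the generic point, after base change to the fraction field $F=\operatorname{Frac}(A)$, where the defining ideals become the maximal ideals of regular local rings and the first Koszul homology can be read off by a K\"unneth argument. Since $A$ is a domain, $\R(I)\subseteq A[t]$ and $\eR(I)\subseteq A[t,t^{-1}]$ are domains, so in each case the rank of a finitely generated module is the dimension of its localization at $(0)$ over the fraction field. As $F$ is flat over $A$ and $H_*(J)$, $H_*(\wh J)$ are homologies of Koszul complexes of free $S$- (resp.\ $\wh S$-) modules, they commute with $-\otimes_A F$. I would first record that $I^n\otimes_A F\cong F$ for all $n$ (each $I^n$ has rank one), whence $\R(I)\otimes_A F\cong F[u]$ and $\eR(I)\otimes_A F\cong F[t,t^{-1}]$ are one-dimensional. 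Writing $S_F=S\otimes_A F=F[X_1,\dots,X_r]$ and $\wh S_F=\wh S\otimes_A F$ (both regular), the ideals $J_F=JS_F$ and $\wh J_F=\wh J\,\wh S_F$ are prime with $S_F/J_F=F[u]$ and $\wh S_F/\wh J_F=F[t,t^{-1}]$; since $S_F,\wh S_F$ are equidimensional and catenary, $\height J_F=\dim S_F-1=r-1$ and $\height \wh J_F=\dim\wh S_F-1=r$.

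The heart is the following local computation: if $(L,\mathfrak n,k)$ is regular local and $f_1,\dots,f_n$ generate $\mathfrak n$, then $H_1(K_\bullet(f_1,\dots,f_n;L))\cong k^{\,n-\dim L}$. To see this, put $c=\dim L$; after an invertible $L$-linear change of the tuple $(f_i)$ I may assume $f_1,\dots,f_c$ is a regular system of parameters, and then, writing each remaining $f_j=\sum_{i\le c}a_{ji}f_i\in\mathfrak n=(f_1,\dots,f_c)$ and applying the unipotent substitution $e_j\mapsto e_j-\sum_{i\le c}a_{ji}e_i$ on the Koszul basis, I identify $K_\bullet(f_1,\dots,f_n;L)$ with $K_\bullet(f_1,\dots,f_c;L)\otimes_L K_\bullet(\underbrace{0,\dots,0}_{n-c};L)$. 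The first factor resolves $k$ (a regular sequence), the second has homology $\bigwedge^\bullet L^{\,n-c}$ with zero differential, so K\"unneth yields $H_1=k\otimes_L\bigwedge^1 L^{\,n-c}=k^{\,n-c}$.

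To finish, I would localize the Koszul complex of the minimal generators of $J$ (resp.\ $\wh J$) at the prime $J_F$ (resp.\ $\wh J_F$). The rings $(S_F)_{J_F}$ and $(\wh S_F)_{\wh J_F}$ are regular local, in them $J_F$ (resp.\ $\wh J_F$) is the maximal ideal, and their residue fields are $F(u)=\operatorname{Frac}(\R(I))$ and $F(t)=\operatorname{Frac}(\eR(I))$. Applying the computation above with $n=\mu(J)$, $c=\height J_F=r-1$ gives $\rank_{\R(I)}H_1(J)=\mu(J)-(r-1)=\mu(J)-\mu(I)+1$, and with $n=\mu(\wh J)$, $c=\height\wh J_F=r$ gives $\rank_{\eR(I)}H_1(\wh J)=\mu(\wh J)-r=\mu(\wh J)-\mu(I)$, which are (2) and (1). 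The step needing the most care is the reduction: verifying that base change to $F$ makes $J$ and $\wh J$ into the maximal ideals of regular local rings at the generic point, so that the local number of generators equals the height, and that the ranks over $\R(I)$ and $\eR(I)$ are genuinely computed by these localizations; once this is in place the K\"unneth computation is routine.
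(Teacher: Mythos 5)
Your argument is correct, and it takes a genuinely different route from the paper. The paper never leaves the Andr\'e--Quillen framework: it uses the four-term exact sequence $0 \rt D_2(\eR(I)|\wh{S},\eR(I)) \rt H_1(\wh{J}) \rt \eR(I)^{\mu(\wh{J})} \rt \wh{J}/\wh{J}^2 \rt 0$, notes that the $D_2$-term is $t^{-1}$-torsion and hence has rank zero, and reduces everything to the computation $\rank_{\eR(I)}\wh{J}/\wh{J}^2=\mu(I)$, which it proves by induction on $\mu(I)$ via Jacobi--Zariski sequences (adjoining one generator of $I$ at a time); part (2) is then deduced by first showing $\wh{S}_{\wh{J}}$, and hence $S_J$, is regular. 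You instead bypass Andr\'e--Quillen homology entirely and compute at the generic point: after base change to $F=\operatorname{Frac}(A)$ the ambient rings become polynomial rings over a field, the defining ideals localize to the maximal ideals of regular local rings of dimensions $r-1$ and $r$, and the rank of $H_1$ falls out of the elementary fact that the Koszul complex on $n$ generators of the maximal ideal of a regular local ring of dimension $c$ has $H_1\cong k^{\,n-c}$. Your approach is more elementary and self-contained (the one point to state carefully is that $S_J=(S_F)_{J_F}$, which also yields the paper's regularity Claims~1 and~2 essentially for free, whereas the paper derives regularity of $\wh{S}_{\wh{J}}$ from the rank of $\wh{J}/\wh{J}^2$); what the paper's route buys is the intermediate identity $\rank_{\eR(I)}\wh{J}/\wh{J}^2=\mu(I)$ as an explicit byproduct and uniformity with the Andr\'e--Quillen machinery used throughout the rest of the article.
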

If $M$ is an $T$-module then let $\Fitt_j(M)$ denote the $j^{th}$ Fitting ideal of $M$.
The following result is well-known and easy to prove:
\begin{proposition}\label{rank-fitt}
Let $T$ be a Noetherian domain and let $M$ be a finitely generated $T$-module of rank $r \geq 1$.
Let $P$ be a prime ideal in $T$. Then the following assertions are equivalent:
\begin{enumerate}[\rm (1)]
\item $M_P$ is free $T_P$-module.
\item $P \nsupseteq \Fitt_r(M)$.
\end{enumerate}
\end{proposition}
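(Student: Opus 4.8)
The plan is to reduce everything to the standard dictionary between Fitting ideals and the minimal number of local generators. Recall that from a finite presentation $T^m \xrightarrow{\phi} T^n \rt M \rt 0$ one computes $\Fitt_j(M)$ as the ideal of $(n-j)$-minors of $\phi$ (with the usual conventions $\Fitt_j(M) = T$ for $j \geq n$ and that minors of size exceeding that of $\phi$ are $0$); this is independent of the presentation and commutes with localization, so $\Fitt_j(M)_P = \Fitt_j(M_P)$. The one fact I would quote is the classical statement that, writing $\mu_P(M) := \dim_{\kappa(P)} M \otimes_T \kappa(P)$ for the minimal number of generators of $M_P$, one has $\Fitt_j(M) \nsubseteq P \iff \mu_P(M) \leq j$ (equivalently $\Fitt_j(M) \subseteq P \iff \mu_P(M) > j$). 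This is proved by localizing at $P$ and applying Nakayama, so that over $\kappa(P)$ the presentation matrix has rank $n - \mu_P(M)$ and its $(n-j)$-minors survive modulo $P$ exactly when $n - j \leq n - \mu_P(M)$.

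Next I would exploit the rank hypothesis. Since $T$ is a domain, the generic point $(0) \in \Spec T$ gives $M \otimes_T K = M_K$ of dimension $r$ over the fraction field $K$, so $\mu_{(0)}(M) = r$. By the dictionary above this forces $\Fitt_{r-1}(M) \subseteq (0)$, that is $\Fitt_{r-1}(M) = 0$ (while $\Fitt_r(M) \neq 0$). Consequently $V(\Fitt_{r-1}(M)) = \Spec T$, i.e. $\mu_P(M) \geq r$ for \emph{every} prime $P$: the rank is a uniform lower bound for the number of local generators. With this in hand both implications are short. For (1)$\Rightarrow$(2): if $M_P$ is free then it is free of rank $r$ (the rank is read off after tensoring with $K$), so $\mu_P(M) = r \leq r$ and hence $P \nsupseteq \Fitt_r(M)$. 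For (2)$\Rightarrow$(1): if $P \nsupseteq \Fitt_r(M)$ then $\mu_P(M) \leq r$, and combined with the uniform bound $\mu_P(M) \geq r$ we obtain $\mu_P(M) = r$; choosing a minimal surjection gives a short exact sequence $0 \rt N \rt T_P^{\,r} \rt M_P \rt 0$. Tensoring with the flat $T_P$-algebra $K$ and using $\dim_K M_K = r$ shows that $K^r \rt M_K$ is a surjection of $r$-dimensional $K$-vector spaces, hence an isomorphism, so $N_K = 0$; as $N \subseteq T_P^{\,r}$ is torsion-free over the domain $T_P$, this forces $N = 0$, i.e. $M_P \cong T_P^{\,r}$ is free.

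I do not expect a serious obstacle here, since the content is organizing the standard Fitting-ideal facts; the result is genuinely "well-known and easy." The only step that uses the domain hypothesis essentially is the passage from $\mu_P(M) = r$ to freeness: it is the torsion-freeness of the syzygy module $N$ together with $N_K = 0$ that upgrades the correct \emph{number} of generators into an actual basis, and this is precisely where rank (rather than a mere count of generators) enters the argument.
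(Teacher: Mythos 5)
Your proof is correct. The paper does not actually supply a proof of this proposition --- it is stated only as ``well-known and easy to prove'' --- so there is nothing to compare against; your argument (the standard dictionary $\Fitt_j(M) \nsubseteq P \iff \mu_P(M) \leq j$, the observation that rank $r$ over a domain forces $\Fitt_{r-1}(M) = 0$ and hence $\mu_P(M) \geq r$ for every prime, and the torsion-freeness of the syzygy module $N \subseteq T_P^{\,r}$ together with $N \otimes_{T_P} K = 0$ to upgrade $\mu_P(M) = r$ to freeness) is precisely the expected standard argument and is complete.
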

By Theorem \ref{rank} and Proposition \ref{rank-fitt} we can determine the free locus of $H_1(\wh{J})$ and $H_1(J)$. When $A$ is a regular local ring we show
\begin{proposition}\label{locus}
Let $A$ be a regular local ring and let $I$ be a non-zero ideal of $A$. We have
\begin{enumerate}[\rm (I)]
  \item Let $P$ be a prime ideal in $\R(I)$. Then the following assertions are equivalent:
  \begin{enumerate}[\rm (a)]
    \item $H_1(J)_P$ is a free $\R(I)_P$-module.
    \item $\R(I)_P$ is a complete intersection.
  \end{enumerate}
  \item Let $P$ be a prime ideal in $\eR(I)$. Then the following assertions are equivalent:
  \begin{enumerate}[\rm (a)]
    \item $H_1(\wh{J})_P$ is a free $\eR(I)_P$-module.
    \item $\eR(I)_P$ is a complete intersection.
  \end{enumerate}
\end{enumerate}
\end{proposition}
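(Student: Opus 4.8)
The plan is to localize at $P$ and reduce everything to a statement about the first Koszul homology of a \emph{minimal} generating set of the defining ideal over a regular local ring. I will prove (I); the proof of (II) is identical, working with $\eR(I)=\wh{S}/\wh{J}$ in place of $\R(I)=S/J$. Since $A$ is regular, for every prime $P$ of $\R(I)$ the corresponding localization $S_P$ of $S$ is regular, and $\R(I)_P=S_P/JS_P$. Koszul homology commutes with localization, so $H_1(J)_P=H_1(f_1,\dots,f_m;S_P)$, where $f_1,\dots,f_m$ are the generators used to form $H_*(J)$.

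First I would carry out a reduction to a minimal generating set. Over the local ring $S_P$ an invertible change of the generating tuple (reorder so that a minimal subset comes first, then use column operations to kill the redundant generators) brings $(f_1,\dots,f_m)$ to $(g_1,\dots,g_c,0,\dots,0)$, where $\mathbf{g}=g_1,\dots,g_c$ is a minimal generating set of $J_P:=JS_P$, so $c=\mu(J_P)$. The Koszul complex is unchanged under an invertible change of generators, and adjoining $m-c$ zero generators tensors it with $\textstyle\bigwedge^\bullet S_P^{\,m-c}$ (zero differential); as this factor has free homology, the Künneth formula gives
$$ H_1(J)_P\;\cong\;H_1(\mathbf{g};S_P)\;\oplus\;\R(I)_P^{\,m-c}. $$
Since a direct summand of a free module over a local ring is free, $H_1(J)_P$ is free over $\R(I)_P$ $\ff$ $H_1(\mathbf{g};S_P)$ is free. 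So it suffices to prove, for a minimal generating set $\mathbf{g}$ of $J_P$ in the regular local ring $S_P$, that $H_1(\mathbf{g};S_P)$ is free over $\R(I)_P$ $\ff$ $\R(I)_P$ is a complete intersection; and by Koszul rigidity the latter is equivalent to $H_1(\mathbf{g};S_P)=0$ (minimal generators of height $=h:=\height J_P$ form a regular sequence exactly when $c=h$, exactly when $H_1$ vanishes).

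The implication (b) $\Rightarrow$ (a) is then immediate: if $\R(I)_P$ is a complete intersection then $H_1(\mathbf{g};S_P)=0$, so $H_1(J)_P\cong\R(I)_P^{\,m-c}$ is free (this also recovers $\rank H_1(J)=m-h=\mu(J)-\mu(I)+1$ of Theorem \ref{rank}). For the converse I would argue by ranks, using that $\R(I)_P$ is a domain. Localizing $S_P$ at the prime $J_P$ shows $\rank_{\R(I)_P}H_1(\mathbf{g};S_P)=c-h=\rank_{\R(I)_P}W$, where $W:=\ker\bigl(\R(I)_P^{\,c}\to J_P/J_P^2\bigr)$ is the relation module of the conormal module. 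There is a canonical surjection $H_1(\mathbf{g};S_P)\twoheadrightarrow W$ coming from the inclusion of the first syzygies into $S_P^{\,c}$ (the Koszul syzygies die modulo $J_P$). If $H_1(\mathbf{g};S_P)$ is free, hence torsion free, then this rank-preserving surjection onto the torsion-free module $W$ has torsion kernel, so the kernel vanishes and $W\cong H_1(\mathbf{g};S_P)$ is free. Consequently the conormal module fits into $0\to W\to\R(I)_P^{\,c}\to J_P/J_P^2\to 0$ with $W$ free, so $J_P/J_P^2$ has projective dimension at most one; the rigidity theorem for conormal modules over a regular local ring then forces $\R(I)_P$ to be a complete intersection, i.e. $c=h$ and $H_1(\mathbf{g};S_P)=0$.

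The main obstacle is precisely this last step, the passage from freeness of $H_1$ to the complete intersection property. The rank/torsion argument cleanly delivers a conormal module of finite (indeed $\le 1$) projective dimension, so the crux is the conormal-module rigidity input: since here $\R(I)_P$ is a quotient of the regular local ring $S_P$ and the projective dimension is at most one, this lies within the classical Ferrand--Vasconcelos circle of ideas rather than requiring the full conormal conjecture. Everything else --- the localization, the splitting off of redundant generators, and the Koszul-rigidity characterization of complete intersections --- is routine.
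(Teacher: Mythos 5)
Your argument reaches the right conclusion, but by a genuinely different and considerably heavier route than the paper's. The paper disposes of both directions in a few lines by quoting Gulliksen's theorem \cite[1.4.9]{GL}: since $S_P$ is regular, $\projdim_{S_P}\R(I)_P$ is finite, and that theorem says precisely that an ideal of finite projective dimension whose first Koszul homology is free is generated by a regular sequence. Your preliminary steps are all correct and would slot in cleanly before such a citation: the reduction to a minimal generating tuple $\mathbf{g}$ via the K\"unneth splitting, the rank computation $\rank H_1(\mathbf{g};S_P)=c-h=\rank W$ (obtained by localizing at the prime $JS_P$, which is the maximal ideal of the regular local ring $S_{J}$), and the torsion argument showing that the surjection $H_1(\mathbf{g};S_P)\rt W$ between torsion-free modules of equal rank is an isomorphism. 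These do yield $\projdim_{\R(I)_P}(J/J^2)_P\le 1$, and as a bonus $D_2(\R(I)_P|S_P,\R(I)_P)=0$. The point to flag is your final input. The classical Ferrand--Vasconcelos theorem requires the conormal module to be \emph{free}, not merely of projective dimension at most one, so what you are actually invoking is the $\projdim\le 1$ case of Vasconcelos' conormal-module conjecture. That case is indeed a theorem (Vasconcelos, \emph{On the homology of} $I/I^2$, Comm.\ Algebra 6 (1978)), and the general finite-projective-dimension case is Briggs' recent resolution of the full conjecture; so your proof closes with the correct attribution, but it is not ``routine classical Ferrand--Vasconcelos,'' and it is a strictly heavier tool than Gulliksen's Koszul-homology criterion, which applies verbatim at exactly the point where you have reduced to minimal generators, free $H_1$, and a regular ambient ring.
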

Proposition \ref{locus} follows easily  from a result of Gulliksen cf., \cite[1.4.9]{GL}.
We give bounds on the rank of minimal generators of  $H_1(\wh{J})$ when $I$ is equi-multiple
and $\eR(I)$ is a complete intersection. We prove
\begin{theorem}\label{min-gen}
Let $(A,\m)$ be a complete intersection and let   $I$ be an equi-multiple of height $r \geq 1$. Assume   the residue field of $A$ is infinite. Let $Q$ be a minimal reduction of $I$. If $\eR(I)$ is a complete intersection then
we have
$$\mu(H_1(\wh{J})) \leq \embdim A/Q + r - d. $$
\end{theorem}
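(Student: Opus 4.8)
The plan is to turn the desired estimate into a bound on the number of defining equations $\mu(\wh{J})$ and then to extract that bound from the complete intersection hypothesis, bringing in the minimal reduction $Q$ only at the very end. Since $\eR(I)$ is a complete intersection, Theorem~\ref{ex-2-thm} shows that $H_1(\wh{J})$ is a free $\eR(I)$-module; over the $*$-local ring $\eR(I)$ a graded free module has minimal number of generators equal to its rank, so $\mu(H_1(\wh{J})) = \rank H_1(\wh{J})$. By Theorem~\ref{rank}(1) this rank equals $\mu(\wh{J}) - \mu(I)$. Hence it suffices to prove $\mu(\wh{J}) \leq \embdim A + \mu(I) - d$, where $\mu(I)$ denotes the number of variables $X_i$ appearing in $\wh{S}$.

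To bound $\mu(\wh{J})$ I localize the graded ring $\wh{S}$ at its $*$-maximal ideal $\mathfrak{N} = \m\wh{S} + (X_1,\dots,X_{\mu(I)},T)$, so that $\mu(\wh{J})$ is the minimal number of generators of $\wh{J}_{\mathfrak{N}} = \ker\bigl(\wh{S}_{\mathfrak{N}} \to \eR(I)_{\M}\bigr)$. As $A$ is a complete intersection, so is the local ring $\wh{S}_{\mathfrak{N}}$, and I write it minimally as $\wh{S}_{\mathfrak{N}} = P/(\fb)$ with $P$ regular local and $\fb$ a $P$-regular sequence inside $\mathfrak{m}_P^2$; thus $\dim P = \embdim \wh{S}_{\mathfrak{N}} = \embdim A + \mu(I) + 1$. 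Composition gives a surjection $P \to \eR(I)_{\M}$ from a regular local ring, and since $\eR(I)$ is a complete intersection its kernel $K'$ is generated by a $P$-regular sequence; being a complete intersection ideal, $K'$ needs exactly $\height K' = \dim P - \dim \eR(I)_{\M} = \embdim A + \mu(I) - d$ generators (using $\dim \eR(I)_{\M} = d+1$). Finally $\wh{J}_{\mathfrak{N}} = K'/(\fb)$, and reduction modulo $(\fb)$ can only lower the number of generators, so $\mu(\wh{J}) \leq \mu_P(K') = \embdim A + \mu(I) - d$. Combined with the first paragraph this already yields $\mu(H_1(\wh{J})) \leq \embdim A - d$.

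It remains to compare $\embdim A - d$ with the stated quantity $\embdim A/Q + r - d$, and this is precisely where the minimal reduction and the infinite residue field enter. As $A/\m$ is infinite and $I$ is equi-multiple of height $r$, the reduction $Q$ satisfies $\mu(Q) = r$, and in the \CM\ ring $A$ these $r$ elements form a regular sequence, so $\dim A/Q = d - r$. Writing $s = \dim_k (Q + \m^2)/\m^2 \leq \mu(Q) = r$ we have $\embdim A/Q = \embdim A - s$, whence $\embdim A - d = \embdim A/Q + s - d \leq \embdim A/Q + r - d$. This gives the claimed inequality; in fact the argument produces the sharper bound $\mu(H_1(\wh{J})) \leq \embdim A - d$.

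The step I expect to be the main obstacle is the middle one: showing that the kernel $K'$ of the surjection $P \to \eR(I)_{\M}$ is generated by a regular sequence of length $\dim P - \dim \eR(I)_{\M}$. This rests on the fact that the complete intersection property of a local ring is independent of the chosen presentation by a regular local ring, which must be applied with care since the presentation at hand is highly non-minimal (the equations $X_iT - a_i$ contribute linear terms) and since $\eR(I)_{\M}$ is reached only after localizing the graded ring $\eR(I)$ at its $*$-maximal ideal; one must also confirm $\dim \eR(I)_{\M} = d+1$ and check that Theorem~\ref{rank}(1) is applicable in this $*$-local complete intersection setting.
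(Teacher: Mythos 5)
Your route is genuinely different from the paper's. The paper never goes near $\mu(\wh{J})$ or the rank of $H_1(\wh{J})$: it identifies $\mu(H_1(\wh{J}))$ with $\dim_k D_2(\eR(I)|A,k)$ (the four-term sequence of \ref{ss} tensored with $k$, using minimality of the presentation), then runs two Jacobi--Zariski sequences: along $A \rt \eR(Q) \rt \eR(I)$, using that $\wh{J}_Q$ is generated by a regular sequence so $D_2(\eR(Q)|A,-)=0$, to embed $D_2(\eR(I)|A,k)$ into $D_2(\eR(I)|\eR(Q),k)$; and along $\eR(Q) \rt \eR(I) \rt k$, using $D_3(k|\eR(I),k)=0$, to embed the latter into $D_2(k|\eR(Q),k)$, whose dimension is $\embdim \eR(Q) - \dim\eR(Q) = \embdim A/Q + r - d$. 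That is where $Q$ genuinely enters for the paper, whereas in your argument it appears only in a cosmetic last step; if your argument were complete it would give the a priori stronger bound $\embdim A - d$.

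There is, however, a genuine gap at your first step, exactly where you flagged it. Theorem \ref{rank}(1) is stated and proved only for $A$ a \CM\ \emph{domain}: the proof of the key Lemma \ref{rank-j} (that $\rank_{\eR(I)}\wh{J}/\wh{J}^2 = \mu(I)$) is an induction that uses $\eR(Q)[X]_{t^{-1}} = A[t,t^{-1}][X]$ being a polynomial ring over a domain, division by the monic polynomial $X - a_lt$, and localizations being DVRs. Theorem \ref{min-gen} assumes only that $A$ is a local complete intersection, which need not be a domain (the paper's own Example \ref{hyper}, $A = R/(f)$, is the model case and $f$ may well be reducible), and the notion of rank used in Theorem \ref{rank} is not even defined there for non-domains. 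So the identity $\mu(H_1(\wh{J})) = \mu(\wh{J}) - \mu(I)$ is not available to you as stated, and repairing it (a version of Lemma \ref{rank-j} over a non-domain, or an argument at a single minimal prime) is not immediate. A second, smaller, unaddressed point: writing $\wh{S}_{\mathfrak{N}} = P/(\fb)$ with $P$ regular local requires $A$ to be a quotient of a regular local ring, so you need a reduction to the completion of the kind carried out in Lemma \ref{comp-2}, checking that $\mu(\wh{J})$, $\mu(H_1(\wh{J}))$, $\embdim A$ and $\embdim A/Q$ are preserved. Both issues look repairable, but as written the proof does not go through under the stated hypotheses; the paper's Jacobi--Zariski argument avoids them entirely by working with coefficients in $k$ from the start.
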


We give an example (see \ref{hyper}) which shows that equality can occur in the above bound. We also note that $\embdim A/Q \leq \embdim A$. So $\embdim A + r -d$ is an upper bound for minimal number of generators of $H_1(\wh{J})$ which is independent of $I$.

Recall if $T$ is a ring of finite Krull dimension and $E$ is a finitely generated $T$-module then
\begin{enumerate}
  \item $E$ is said to be unmixed if all associate primes of $E$ are minimal.
  \item $E$ is said to be equi-dimensional if $\dim T/P = \dim E$ for all minimal primes $P$ of $E$.
\end{enumerate}
We prove:
\begin{theorem}\label{first}
(with hypotheses as in \ref{setup}).  Further assume $A$ is \CM \ and $\height I > 0$. We have
\begin{enumerate}[\rm (I)]
  \item The following assertions are equivalent:
  \begin{enumerate}[\rm (a)]
    \item $H_1(J)$ is unmixed and equi-dimensional.
    \item $D_2(\R(I)|A, \R(I)) = 0$.
  \end{enumerate}
  \item The following assertions are equivalent:
  \begin{enumerate}[\rm (a)]
    \item $H_1(\wh{J})$ is unmixed and equi-dimensional.
    \item $D_2(\eR(I)|A, \eR(I)) = 0$.
  \end{enumerate}
\end{enumerate}
\end{theorem}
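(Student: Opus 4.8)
The plan is to reduce the statement to the Lichtenbaum--Schlessinger description of $D_2$. By \ref{herzog} we have $D_2(\R(I)|A,\R(I))=D_2(\R(I)|S,\R(I))$, and for the surjection $\epsilon\colon S\to\R(I)=S/J$ the truncated cotangent complex in degrees $\le 2$ gives the four term exact sequence
$$0\rt D_2(\R(I)|S,\R(I))\rt H_1(J)\xrightarrow{\ \delta\ }\R(I)^m\rt J/J^2\rt 0,$$
where $m=\mu(J)$, $H_1(J)=Z/Z_0$ is the first Koszul homology of a minimal generating set $\mathbf f$ of $J$ ($Z$ the syzygies, $Z_0$ the Koszul syzygies), and $\delta$ is induced by $Z\hookrightarrow S^m$ followed by reduction modulo $J$. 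Hence $D_2(\R(I)|A,\R(I))=\ker\delta$, so that this module vanishes if and only if $\delta$ is injective, i.e.\ if and only if $H_1(J)$ embeds in the free module $\R(I)^m$. The surjection $\wh\epsilon\colon\wh S\to\eR(I)$ yields in the same way $D_2(\eR(I)|A,\eR(I))=\ker(\wh\delta\colon H_1(\wh J)\to\eR(I)^{\wh m})$, so (I) and (II) have formally identical proofs and I discuss only (I).

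The key local input is a computation at the minimal primes. Since Andr\'e--Quillen homology commutes with localization, for $P\in\Spec\R(I)$ with $\fp=P\cap A$ one has $D_2(\R(I)|A,\R(I))_P=D_2(\R(I)_P|A_\fp,\R(I)_P)$. When $P$ is a minimal prime of $\R(I)$ its contraction $\fp$ is a minimal prime of $A$, and since $\height I>0$ we get $IA_\fp=A_\fp$; therefore $\R(I)\otimes_A A_\fp=A_\fp[t]$ is a polynomial ring and $\R(I)_P$, being a localization of it, is smooth over $A_\fp$. Hence $D_2(\R(I)|A,\R(I))_P=0$ at every minimal prime, i.e.\ $\ker\delta$ has no associated prime lying in $\operatorname{Min}\R(I)$. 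Two further structural facts about the base are needed, both consequences of $A$ being \CM\ with $\height I>0$: first, $\R(I)$ is equidimensional of dimension $\dim A+1$ and is catenary (a quotient of the \CM\ ring $S$); second, $\R(I)$ is unmixed, i.e.\ $\Ass\R(I)=\operatorname{Min}\R(I)$. For the last point one chooses, by prime avoidance, an element $a\in I$ that is a nonzerodivisor on $A$; then $at\in\R(I)_+$ is a nonzerodivisor on $\R(I)$, so every associated prime lies in the chart $D(at)$, on which $\R(I)$ is a Laurent extension of the blowup chart $A[I/a]$.

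Assume first that $J$ is not a complete intersection; otherwise $H_1(J)=0$ and both conditions hold trivially. Here $\height J=r-1$ (where $r=\mu(I)$), coming from $\dim S=\dim A+r$ and $\dim\R(I)=\dim A+1$. At each minimal prime $P$ of $\R(I)$ the localized ideal $J$ is a complete intersection minimally generated by $r-1$ elements, while $\mathbf f$ has $\mu(J)>r-1$ elements, so the localized $\mathbf f$ is a non-minimal generating set and $H_1(J)_P\ne 0$; using equidimensionality and catenarity one deduces $\dim H_1(J)=\dim\R(I)$ and that the minimal primes of $\Supp H_1(J)$ are exactly the minimal primes of $\R(I)$. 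The equivalence is now a bookkeeping of associated primes. For (b)$\Rightarrow$(a): if $D_2=0$ then $H_1(J)\hookrightarrow\R(I)^m$, so $\Ass H_1(J)\sub\Ass\R(I)^m=\Ass\R(I)=\operatorname{Min}\R(I)$, and these primes are all top dimensional; hence $H_1(J)$ is unmixed and equi-dimensional. For (a)$\Rightarrow$(b): if $\ker\delta\ne 0$ pick $P\in\Ass(\ker\delta)\sub\Ass H_1(J)$; unmixedness of $H_1(J)$ makes $P$ minimal in $\Supp H_1(J)$, hence a minimal prime of $\R(I)$, contradicting the vanishing of $D_2$ at minimal primes. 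Therefore $D_2=0$. Part (II) is proved verbatim, replacing $J,S,\R(I)$ by $\wh J,\wh S,\eR(I)$ and using that $\eR(I)\otimes_A A_\fp=A_\fp[t,t^{-1}]$ is a Laurent polynomial ring, hence smooth over $A_\fp$, at each minimal prime.

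The homological bookkeeping is routine; the real work lies in the two structural inputs of the second paragraph. Establishing that $\R(I)$ (and $\eR(I)$) is unmixed and equidimensional over a \CM\ base, and identifying its local rings at minimal primes precisely enough to guarantee $D_2{}_P=0$, is the crux, and it is exactly here that the Cohen--Macaulayness of $A$ together with $\height I>0$ is used. Equidimensionality and catenarity are also what allow one to pass between ``minimal in $\Supp H_1(J)$'' and ``minimal in $\R(I)$''; without them the associated-prime argument of the third paragraph breaks down.
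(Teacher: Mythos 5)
Your proof is correct, and while the formal skeleton (the four-term exact sequence from \ref{ss} identifying $D_2$ with $\ker\delta$, and the direction (b)$\Rightarrow$(a) via embedding $H_1$ into a free module over the unmixed, equi-dimensional ring $\R(I)$ resp.\ $\eR(I)$ -- the paper's Theorem \ref{unmixed}) coincides with the paper's, your argument for the essential direction (a)$\Rightarrow$(b) takes a genuinely different route. You show directly that $D_2$ localizes to zero at every minimal prime $P$ of $\R(I)$ (resp.\ $\eR(I)$), since such a $P$ contracts to a minimal prime $\fp$ of $A$ where $IA_\fp=A_\fp$, so $\R(I)\otimes_A A_\fp=A_\fp[t]$ (resp.\ $A_\fp[t,t^{-1}]$) is smooth over $A_\fp$; combined with the explicit computation that $J$ localizes at such $P$ to a complete intersection on $\mu(I)-1$ generators, whence $H_1(J)_P\cong\R(I)_P^{\mu(J)-\mu(I)+1}\neq 0$ in the non-complete-intersection case and $\operatorname{Min}\Supp H_1(J)=\operatorname{Min}\R(I)$, unmixedness alone forces $\Ass(\ker\delta)=\emptyset$. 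The paper argues instead by a dimension count: for (II) it invokes Lemma \ref{l} to see that $D_2(\eR(I)|A,\eR(I))$ is $t^{-1}$-torsion, hence of dimension $\leq d$, which is incompatible with being a nonzero submodule of an unmixed equi-dimensional module of dimension $d+1$; for (I) it must first reduce to an infinite residue field (Lemma \ref{inf-bc}) and to generators $a_i$ that are $A$-regular (Proposition \ref{generic}), then shows $D_2$ is $\R(I)_+$-torsion by localizing at the elements $a_it$ and comparing with the extended Rees algebra. Your approach buys a uniform treatment of (I) and (II) and avoids the base change, the superficial-element machinery, and Lemma \ref{l}; what it costs is the explicit identification of $\operatorname{Min}\R(I)$ with the contractions of $\fp A[t]$, $\fp\in\operatorname{Min}A$, and the local description of $J$ there (both routine, and consistent with the paper's appendix). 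One small caveat: your sketch of unmixedness of $\R(I)$ via the chart $D(at)$ is looser than what is needed for (b)$\Rightarrow$(a); you should simply cite the paper's Theorem \ref{unmixed}, which is exactly the required statement.
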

We note the modules  $D_*(\R(I)|A, \R(I))$ and $D_*(\eR(I)|A, \eR(I))$ are graded. So we may consider vanishing of these modules for high degrees. If $L$ is an ideal in a ring $T$ then by $V(L)$ we denote the primes in $T$ containing $L$.
We show
\begin{theorem}\label{second}
(with hypotheses as in \ref{setup}).  Further assume $A$ is \CM \ and $\height I > 0$.
 The following assertions are equivalent:
\begin{enumerate}[\rm (i)]
  \item $\Ass H_1(J) \subseteq \Ass \R(I) \cup V(\R(I)_+)$.
  \item $\Ass H_1(\wh{J}) \subseteq \Ass \eR(I) \cup V(\eR(I)_+)$.
  \item  $D_2(\R(I)|A, \R(I))_n = 0$ for $n \gg 0$.
  \item  $D_2(\eR(I)|A, \eR(I))_n = 0$ for $n \gg 0$.
\end{enumerate}
\end{theorem}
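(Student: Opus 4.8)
The plan is to prove the two horizontal equivalences (i)~$\iff$~(iii) and (ii)~$\iff$~(iv) by one and the same mechanism, and then to bridge the Rees and extended Rees pictures through (iii)~$\iff$~(iv). The mechanism for (i)~$\iff$~(iii) has two ingredients: a grading dictionary translating the asymptotic vanishing of a finitely generated graded $\R(I)$-module into a statement about its associated primes, and a comparison identifying $\Ass D_2(\R(I)|A,\R(I))$ with the ``bad'' associated primes of $H_1(J)$. Running the same two steps with $\wh{J},\eR(I)$ in place of $J,\R(I)$ will give (ii)~$\iff$~(iv).

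First I would record the grading dictionary. By \ref{herzog} we have $D_2(\R(I)|A,\R(I)) = D_2(\R(I)|S,\R(I))$, and both this module and $H_1(J)$ are finitely generated graded modules over $\R(I)$, which is generated over $\R(I)_0 = A$ by $\R(I)_1$. For any such module $M$ I claim that $M_n = 0$ for $n \gg 0$ if and only if $\Ass M \subseteq V(\R(I)_+)$. If $\Ass M \subseteq V(\R(I)_+)$ then $\R(I)_+ \subseteq \bigcap_{P \in \Ass M} P = \sqrt{\ann M}$, so a power of $\R(I)_+$ annihilates $M$; as $M$ is generated in bounded degrees, $M_n = 0$ for $n \gg 0$. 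Conversely, if $M_n = 0$ for $n \gg 0$ and some $P \in \Ass M$ did not contain $\R(I)_+$, then choosing a homogeneous $f \in \R(I)_+ \setminus P$ of positive degree, every homogeneous $m \in M$ satisfies $f^k m \in M_{\deg m + k\deg f} = 0$ for $k \gg 0$; hence $M_P = 0$, contradicting $P \in \Ass M \subseteq \Supp M$. Thus (iii) is equivalent to $\Ass D_2(\R(I)|A,\R(I)) \subseteq V(\R(I)_+)$, and similarly (iv) is equivalent to $\Ass D_2(\eR(I)|A,\eR(I)) \subseteq V(\eR(I)_+)$.

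The heart of the matter is the comparison
\[
\Ass D_2(\R(I)|A,\R(I)) = \Ass H_1(J) \setminus \Ass \R(I),
\]
which I would extract from the machinery behind Theorem \ref{first}. Since $A$ is Cohen--Macaulay, $\Ass\R(I)$ consists of minimal primes, all of the same dimension (the same properties of $\R(I)$ underlying Theorem \ref{first}), so an associated prime of $H_1(J)$ fails to be a minimal prime of $\R(I)$ exactly when it is an embedded or a lower-dimensional component of $H_1(J)$. The content of (the proof of) Theorem \ref{first} is that $D_2(\R(I)|A,\R(I))$ is precisely the obstruction to $H_1(J)$ being unmixed and equidimensional, and I would refine that argument to the assertion that a prime lies in $\Ass D_2(\R(I)|A,\R(I))$ iff it is such a ``bad'' associated prime of $H_1(J)$, i.e. iff it lies in $\Ass H_1(J) \setminus \Ass \R(I)$. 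Granting this, $\Ass D_2(\R(I)|A,\R(I)) \subseteq V(\R(I)_+)$ holds iff $\Ass H_1(J) \setminus \Ass \R(I) \subseteq V(\R(I)_+)$, i.e. iff $\Ass H_1(J) \subseteq \Ass \R(I) \cup V(\R(I)_+)$, which is exactly (i); combined with the grading dictionary this proves (i)~$\iff$~(iii). The identical argument, using part (II) of Theorem \ref{first} and the analogue $\Ass D_2(\eR(I)|A,\eR(I)) = \Ass H_1(\wh{J}) \setminus \Ass \eR(I)$, proves (ii)~$\iff$~(iv).

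Finally I would link the two sides by proving (iii)~$\iff$~(iv). Here I would compare $D_2(\R(I)|A,\R(I))$ and $D_2(\eR(I)|A,\eR(I))$ using $\wh{J} = J\wh{S} + (TX_i - a_i \mid 1 \le i \le r)$ together with the standard facts that $t^{-1}$ is a nonzerodivisor on $\eR(I)$, that $\eR(I)_{t^{-1}} = A[t,t^{-1}]$ is smooth over $A$, and that $\eR(I)/(t^{-1}) = \operatorname{gr}_I(A)$; this is the same comparison between the Rees and extended Rees algebras already exploited in Theorem \ref{proj-ci}, and I would reuse it to transport the asymptotic vanishing of $D_2$ across the two algebras. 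I expect the main obstacle to be the comparison lemma $\Ass D_2 = \Ass H_1 \setminus \Ass \R(I)$: upgrading the qualitative equivalence of Theorem \ref{first} to an exact, prime-by-prime description of $\Ass D_2$ calls for a natural short exact sequence tying $D_2(\R(I)|A,\R(I))$ to $H_1(J)$ whose remaining term is unmixed of full dimension, so that associated primes can be read off directly; constructing this sequence and its extended Rees counterpart is the technical core, with the Rees/extended-Rees bridge a secondary difficulty.
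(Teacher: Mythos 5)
Your architecture is sound and matches the paper's in outline: the ``grading dictionary'' ($M_n=0$ for $n\gg 0$ iff $M$ is $\R(I)_+$-torsion iff $\Ass M\subseteq V(\R(I)_+)$, for a finitely generated graded module) is correct, and the four-term exact sequence of \ref{ss}, $0\rt D_2(\R(I)|S,\R(I))\rt H_1(J)\rt \R(I)^s\rt J/J^2\rt 0$, is exactly the ``natural sequence tying $D_2$ to $H_1(J)$'' you ask for at the end; it immediately gives $\Ass H_1(J)\subseteq \Ass D_2\cup\Ass\R(I)$ and hence the implications (iii)$\implies$(i) and (iv)$\implies$(ii), just as in the paper. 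The problem is that the entire difficulty of the theorem is concentrated in the converse inclusion of your ``heart of the matter'' claim, namely $\Ass D_2(\R(I)|A,\R(I))\cap\Ass\R(I)=\emptyset$ (equivalently, that under hypothesis (i) the submodule $D_2$ of $H_1(J)$ is supported only on $V(\R(I)_+)$). You assert this, say you ``would refine'' the argument of Theorem \ref{first} to get it, and then in your final paragraph explicitly defer it as ``the technical core.'' That is not a proof of the theorem; it is a restatement of what remains to be proved.

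For the record, the paper's argument for this step is genuinely nontrivial and uses ingredients your sketch never invokes: one first reduces to an infinite residue field (Lemma \ref{inf-bc}) so as to choose generators $a_1,\ldots,a_s$ of $I$ each of which is $A$-regular (Proposition \ref{generic} in the appendix); one then localizes at $X_i=a_it$, where $\R(I)_{X_i}=\eR(I)_{X_i}$, uses Lemma \ref{l} to see that $D_2$ becomes $t^{-1}$-torsion there, and uses the unmixedness of $\eR(I)$ (Theorem \ref{unmixed}) together with the $A$-regularity of $a_i$ to show $t^{-1}$ lies in no associated prime of $\eR(I)_{X_i}$; hypothesis (i) then forces $t^{-1}$ to be a nonzerodivisor on $H_1(J)_{X_i}$, so its $t^{-1}$-torsion submodule $D_2{}_{X_i}$ must vanish for every $i$, i.e.\ $D_2$ is $\R(I)_+$-torsion. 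Without the regular-generator choice the key claim ($t^{-1}\notin Q_{X_i}$ for $Q\in\Ass\eR(I)$) can fail, so this reduction is not cosmetic. Similarly, your bridge (iii)$\iff$(iv) is only gestured at; the paper proves it as Lemma \ref{lemm}, a degreewise isomorphism $D_j(\R(I)|A,\R(I))_n\cong D_j(\eR(I)|A,\eR(I))_n$ for $n\gg 0$ obtained from the Jacobi--Zariski sequence for $A\rt\R(I)\rt\eR(I)$ and the fact that $D_j(\eR(I)|\R(I),-)$ is $\eR(I)_+$-torsion. To complete your proposal you would need to supply both of these arguments.
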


\begin{remark}
Theorems \ref{first}, \ref{second} are not only interesting in its own regard but it is also an essential ingredient in the proofs of Theorems \ref{proj-ci} and \ref{ex-2-thm}.
\end{remark}

\s  \label{poly-m} Let  $(A,\m)$ be a Noetherian local ring and let $I$ be an $\m$-primary ideal. Let  $\R(I) = \bigoplus_{n \geq 0}I^n$ be the Rees algebra of $I$.  It is easily shown that for all $j \geq 1$ the $A$-module  $D_j(\R(I)|A, E)_n$ has finite length for all $n$ (here $E$ is a finitely generated graded $\R(I)$-module); see \ref{poly-growth}. Thus the function $n \rt \ell(D_j(\R(I)|A, E)_n$ is of polynomial type of degree $\leq d - 1$. Here $d = \dim A$. We assume $d \geq 1$.
It is of some interest to find general bounds on this polynomial. Let $S = A[X_1, \ldots, X_l]$ be a graded polynomial algebra mapping onto $\R(I)$ and let $\epsilon \colon S \rt \R(I)$ be this map. We set the degree of the zero polynomial to be $-1$.
\begin{theorem}\label{poly-m-thm}(with hypotheses as in \ref{poly-m}).  Also assume $A$ is \CM. Let $ 1 \leq i \leq d$. The following are equivalent:
\begin{enumerate}[\rm (i)]
  \item For any finitely generated graded $\R(I)$-module $E$ the function\\
$n \rt \ell(D_2(\R(I)|A, E)_n)$ is of polynomial type of degree $\leq d - i- 1$.
  \item For every prime $P \in \Proj(\R(I))$ with $\height P \leq i$ the map $\epsilon_P \colon S_{\epsilon^{-1}(P)} \rt \R(\F)_P$ is a complete intersection.
\end{enumerate}
Furthermore if any of the above conditions hold then for any finitely generated graded $\R(I)$-module  and $j \geq 2$ the function
$n \rt \ell(D_j(\R(I)|A, E)_n)$ is of polynomial type of degree $\leq d - i- 1$.
\end{theorem}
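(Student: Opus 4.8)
The plan is to translate both conditions into statements about the supports of the finitely generated graded $\R(I)$-modules $D_j(\R(I)|A, E)$ and then read off the degrees of the associated Hilbert polynomials. Three ingredients drive the argument. First, by \ref{herzog} we have $D_j(\R(I)|A, E) = D_j(\R(I)|S, E)$ for all $j \geq 2$, so all higher André--Quillen homology is computed from the surjection $\epsilon \colon S \rt \R(I)$. Second, André--Quillen homology localizes: for $P \in \Spec \R(I)$ one has $D_j(\R(I)|S, E)_P \cong D_j(\R(I)_P \mid S_{\epsilon^{-1}(P)}, E_P)$. Third, since $\epsilon$ is surjective, the localized map $\epsilon_P \colon S_{\epsilon^{-1}(P)} \rt \R(I)_P$ is a complete intersection if and only if $D_2(\R(I)_P \mid S_{\epsilon^{-1}(P)}, \R(I)_P) = 0$, and in that case $D_j(\R(I)_P \mid S_{\epsilon^{-1}(P)}, -) = 0$ for all $j \geq 2$ (the Lichtenbaum--Schlessinger / André--Quillen criterion for the defining ideal to be locally generated by a regular sequence). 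Combining these, for $P \in \Proj \R(I)$ the map $\epsilon_P$ is a complete intersection if and only if $D_2(\R(I)|A, \R(I))_P = 0$, and when this holds $D_j(\R(I)|A, E)_P = 0$ for every finitely generated graded $E$ and every $j \geq 2$.

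The key dictionary is as follows. Each $M = D_j(\R(I)|A, E)$ is a finitely generated graded $\R(I)$-module with $\ell(M_n) < \infty$ for all $n$ (this is \ref{poly-growth}), so its support inside $\Proj \R(I)$ lies in $V(\m \R(I))$. Since $I$ is $\m$-primary the fibre cone $\R(I)/\m\R(I)$ has dimension equal to the analytic spread, which is $d$; hence the primes of $V(\m\R(I))$ meeting $\Proj \R(I)$ have height $\geq 1$. Because $A$ is \CM, I would argue that $\R(I)$ is equidimensional and catenary, so that $\dim \R(I)/P = d + 1 - \height P$ for every $P \in \Proj \R(I)$. Consequently the degree of the polynomial $n \rt \ell(M_n)$ equals $\max\{\dim \R(I)/P : P \in \Supp M \cap \Proj \R(I)\} - 1$; in particular this degree is $\leq d - i - 1$ if and only if $\Supp M \cap \Proj \R(I)$ contains no prime of height $\leq i$ (the convention $\deg 0 = -1$ accommodating the empty case, valid since $i \leq d$).

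With this dictionary the equivalence is immediate. Assume (ii). For every $P \in \Proj \R(I)$ with $\height P \leq i$ the map $\epsilon_P$ is a complete intersection, so by the first paragraph $D_j(\R(I)|A, E)_P = 0$ for all $j \geq 2$ and all finitely generated graded $E$. Thus $\Supp D_j(\R(I)|A, E) \cap \Proj \R(I)$ omits every prime of height $\leq i$, and the dictionary yields (i) together with the stated ``furthermore'' for every $j \geq 2$. Conversely, assume (i) and apply it to $E = \R(I)$: the function $n \rt \ell(D_2(\R(I)|A, \R(I))_n)$ has degree $\leq d - i - 1$, so the dictionary forces $D_2(\R(I)|A, \R(I))_P = 0$ for every $P \in \Proj \R(I)$ with $\height P \leq i$, and the complete intersection criterion of the first paragraph then gives (ii).

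The main technical point, and the step I expect to demand the most care, is the precise form of the support dictionary: that the degree of $n \rt \ell(M_n)$ is governed by $\max\{\dim \R(I)/P : P \in \Supp M \cap \Proj \R(I)\}$ and that height and codimension sum to $d+1$. The latter requires verifying that $\R(I)$ is equidimensional and catenary, which I would deduce from $A$ being \CM \ (hence equidimensional and universally catenary) together with $\height I > 0$, while the computation $\dim(\R(I)/\m\R(I)) = d$ uses that $I$ is $\m$-primary. A secondary point is the compatibility of the localization isomorphism for $D_*$ with the grading, so that the vanishing of $M$ on $\Proj \R(I)$ is detected degreewise by the length function; here the finiteness statement \ref{poly-growth} is essential, since it guarantees that $n \rt \ell(M_n)$ is genuinely of polynomial type with degree equal to the dimension of $\Supp M$ inside $\Proj \R(I)$.
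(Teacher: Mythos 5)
Your overall strategy --- converting the degree bound on $n \rt \ell(D_2(\R(I)|A,E)_n)$ into a statement about which primes of $\Proj\R(I)$ may lie in $\Supp D_2(\R(I)|A,E)$, using that $\R(I)$ is equidimensional and catenary so that $\dim \R(I)/P = d+1-\height P$, and then localizing Andr\'e--Quillen homology --- is exactly the route the paper takes, and your (ii) $\Rightarrow$ (i) direction and the ``furthermore'' clause are in order. However, there is a genuine gap in (i) $\Rightarrow$ (ii): the ``complete intersection criterion'' you invoke is stated with the wrong coefficient module. It is not true that $\epsilon_P$ is a complete intersection if and only if $D_2(\R(I)_P\,|\,S_{\epsilon^{-1}(P)},\R(I)_P)=0$. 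By the four-term exact sequence \ref{ss} with $M=\R(I)$, the module $D_2(\R(I)|S,\R(I))$ is the kernel of $H_1(J)\rt\R(I)^s$, so its vanishing only says that $J$ is syzygetic --- equivalently, by the paper's own Theorem \ref{first}, that $H_1(J)$ is unmixed and equidimensional --- which is far weaker than $J$ being locally generated by a regular sequence (any syzygetic perfect ideal that is not a complete intersection is a counterexample). The correct criterion uses residue-field coefficients: $J_{\epsilon^{-1}(P)}$ is generated by a regular sequence if and only if $D_2(\R(I)_P\,|\,S_{\epsilon^{-1}(P)},\kappa(P))=0$, because with $M=\kappa(P)$ the sequence \ref{ss} identifies this $D_2$ with $H_1(J_{\epsilon^{-1}(P)})\otimes\kappa(P)$ (the map $\kappa(P)^s\rt J/J^2\otimes\kappa(P)$ being an isomorphism for a minimal generating set), after which one concludes by Nakayama. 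Consequently, testing hypothesis (i) only on $E=\R(I)$ does not suffice to prove (ii).

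The repair is precisely what the paper does: for $P\in\Proj\R(I)$ with $\height P\leq i$, apply hypothesis (i) to the test module $E=\R(I)/P$. Your dictionary then forces $0=D_2(\R(I)|A,\R(I)/P)_P=D_2(\R(I)_P\,|\,S_{\epsilon^{-1}(P)},\kappa(P))$, whence $H_1(J_{\epsilon^{-1}(P)})=0$ and $\epsilon_P$ is a complete intersection. With that substitution your argument closes and coincides with the paper's proof.
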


We need to find a single module which can determine a bound on the degree of the above polynomials. We are able to do this when $(A,\m)$ is regular local. We prove
\begin{corollary}\label{poly-m-corr-reg}(with hypotheses as in \ref{poly-m}). Further assume that $A$ is regular local. Let $J = \ker \epsilon$.
Let $ 1 \leq i \leq d$. The following are equivalent:
\begin{enumerate}[\rm (i)]
  \item For any finitely generated graded $\R(I)$-module $E$ the function\\
$n \rt \ell(D_2(\R(I)|A, E)_n)$ is of polynomial type of degree $\leq d - i- 1$.
  \item For every prime $P \in \Proj(\R(I))$ with $\height P \leq i$ the module $H_1(J)_P$ is free.
  \item $\R(I)_P$ is a complete intersection for every prime $P \in \Proj(\R(I))$ with \\ $\height P \leq i$.
  \end{enumerate}
\end{corollary}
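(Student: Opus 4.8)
The plan is to obtain this as a formal consequence of Theorem~\ref{poly-m-thm} together with Proposition~\ref{locus}, the added hypothesis that $A$ is regular being used only to translate the complete intersection property of the \emph{map} $\epsilon_P$ into a statement about the \emph{ring} $\R(I)_P$. Theorem~\ref{poly-m-thm} applies here, since a regular local ring is \CM, and it already furnishes the equivalence of (i) with the condition that, for every $P \in \Proj(\R(I))$ with $\height P \leq i$, the localized map $\epsilon_P \colon S_{\epsilon^{-1}(P)} \rt \R(I)_P$ is a complete intersection. Since conditions (ii) and (iii) are quantified over the same set of primes, I would fix one such $P$, prove the relevant assertions are equivalent at $P$, and then quantify over $P$.

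To pass from the map-theoretic condition to (iii), the crucial observation is that $S = A[X_1, \ldots, X_l]$ is regular because $A$ is; hence the localization $S_{\epsilon^{-1}(P)}$ at the prime $\epsilon^{-1}(P)$ is a regular local ring. As $\R(I)_P = S_{\epsilon^{-1}(P)}/J_{\epsilon^{-1}(P)}$, saying that the surjection $\epsilon_P$ is a complete intersection is exactly saying that its kernel $J_{\epsilon^{-1}(P)}$ is generated by a regular sequence; and over the regular local ring $S_{\epsilon^{-1}(P)}$ this holds if and only if the quotient $\R(I)_P$ is a complete intersection ring. This identifies the condition ``$\epsilon_P$ is a complete intersection'' with (iii) at each $P$, and hence, combined with Theorem~\ref{poly-m-thm}, yields the equivalence (i) $\Leftrightarrow$ (iii).

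Finally, the equivalence (ii) $\Leftrightarrow$ (iii) is immediate from Proposition~\ref{locus}(I) applied prime by prime: for each $P$ it states precisely that $H_1(J)_P$ is $\R(I)_P$-free if and only if $\R(I)_P$ is a complete intersection. (Proposition~\ref{locus} applies since $I$ is $\m$-primary and $d \geq 1$, so $I \neq 0$.) Assembling the per-prime equivalences over all $P \in \Proj(\R(I))$ with $\height P \leq i$ then gives (i) $\Leftrightarrow$ (ii) $\Leftrightarrow$ (iii). The corollary is thus essentially formal: its substantive content is carried by Theorem~\ref{poly-m-thm}, and the only point requiring any care is the identification in the second paragraph of ``$\epsilon_P$ is a complete intersection'' with ``$\R(I)_P$ is a complete intersection ring'', which is routine once one records that regularity of $A$ makes $S_{\epsilon^{-1}(P)}$ regular local.
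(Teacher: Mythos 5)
Your proposal is correct and follows essentially the same route as the paper: both reduce everything to Theorem~\ref{poly-m-thm} via the observation that regularity of $A$ makes $S_{\epsilon^{-1}(P)}$ regular local (so ``$\epsilon_P$ is a complete intersection'' means ``$\R(I)_P$ is a complete intersection ring''), and both use Gulliksen's result \cite[1.4.9]{GL} to link freeness of $H_1(J)_P$ to $J_{\epsilon^{-1}(P)}$ being generated by a regular sequence --- you merely quote that step through Proposition~\ref{locus} while the paper invokes Gulliksen directly.
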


We now describe in brief the contents of this paper. In section two we discuss some preliminaries  that we need. In section three we prove some basic results in Andr\"e-Quillen homology of Rees algebras. In section four we prove Theorems \ref{first} and \ref{second}. In section five we prove Theorem \ref{ex-2-thm}. In section six  we prove Theorem \ref{proj-ci}. In the next section we give a proof of Theorem \ref{rank}. In section eight we give a proof of Proposition \ref{locus}. In the next section we give aproof of Theorem \ref{min-gen}. In section ten we give proofs of Theorem \ref{poly-m-thm} and Corollary \ref{poly-m-corr-reg}. Finally in the appendix we give a proof of some results which we believe is already known. However we do not have a reference.

\section{preliminaries}
In this section we discuss some preliminary facts that we need.

\s Let $\psi \colon R \rt S$ be a homomorphism of Noetherian rings. The   Andr\'e-Quillen homology $D_n(S | R, N)$ of the $R$-algebra $S$ with coefficients in an $S$-module $N$ is the $n^{th}$ homology module of $L(S|R)\otimes_S N$, where $L(S|R)$ is the cotangent complex of $\psi$, uniquely defined in the derived category of $S$-modules $D(S)$, see \cite{A} and \cite{Q}.
We follow the exposition in \cite{I}. For $n = 0, 1,2$ a nice treatment has been given in \cite{MR}. If $R$ and $S$ are ($\Z$)-graded and $\psi$ is degree preserving then $L(S|R)$ is a complex of graded $S$-modules and if $N$ is a graded $S$-module then $D_n(S|R, N)$ is a graded $S$-module for all $n \geq 0$.

\s If $\psi \colon R \rt S$ is essentially of finite type then $L(S|R)$ is homotopic to a complex
$$ \cdots  \rt L_n \rt L_{n-1} \rt \cdots \rt L_1 \rt L_0 \rt 0,$$
where each $L_i$ is a finitely generated free $S$-module, see \cite[6.11]{I}.
So if $N$ is a finitely generated $S$-module then so is $D_n(S|R, N)$ for all $n \geq 0$.

\s \label{ff} Let $(A, \m)$ be a Noetherian local ring. Let $T = \bigoplus_{n \in \Z}T_n$ be a $\Z$-graded ring with $T_0 = A$. Assume $\n = (\bigoplus_{n \leq -1}T_n )\oplus \m \oplus (\bigoplus_{n \geq 1}T_n)$ is a maximal ideal of $T$.  Let $I$ be a graded ideal of $T$. If  $T$ is \CM \ then all non-zero Koszul homology modules of $I$ have Krull dimension equal to $\dim T/I$. This is proved for local rings in \cite[4.2.2]{V}. The same proof works for *-local rings.

\s \label{ss} (with hypotheses as in \ref{ff}).  Assume $K = (u_1, \ldots, u_s)$ where $u_i$ are homogeneous. Let $M$ be a graded $T/K$-module. (Here $T$ need not be \CM). Then there exists an exact sequence
\begin{align*}
 0 \rt D_2(T/K|T, M) &\rt H_1(u_1, \ldots, u_s)\otimes_{T/K}M \\
 &\rt (T/K)^s\otimes_{T/K}M \rt K/K^2\otimes_{T/K}M \rt 0.
\end{align*}
This proof is proved for local rings in \cite[2.5.1]{MR}. The same proof works for *-local rings.

\section{Some preliminary results }
In this section we first prove:
\begin{lemma}\label{l}
Let $(A,\m)$ be a Noetherian local ring and let $I \subseteq \m$ be an ideal of $A$. Let $M$ be a finitely generated $A$-module with an $I$-stable filtration $\F = \{ \F_n \}_{n \in \Z}$. Let $\eR(\F, M) = \bigoplus_{n \in \Z}\F_n$ be the extended Rees module of $M$ \wrt \ $\F$. Then for $j \geq 1$ there exists positive integers $s_j$ (depending on $j$) such that
$$t^{-s_j}D_j(\eR(I)|A, \eR(\F, M)) = 0.$$
In particular $D_j(\eR(I)|A, \eR(\F, M))_n = 0$ for $n \ll 0$.
\end{lemma}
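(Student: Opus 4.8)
The plan is to reduce everything to the single homogeneous element $t^{-1} \in \eR(I)$ of degree $-1$ by inverting it. First I would record the standard identification $\eR(I)_{t^{-1}} = A[t, t^{-1}]$: indeed $\eR(I) = A[It,t^{-1}] \sub A[t,t^{-1}]$, and once $t^{-1}$ is inverted the element $t$ becomes available, so the two inclusions $A[t,t^{-1}] \sub \eR(I)_{t^{-1}} \sub A[t,t^{-1}]$ both hold. The structure map $A \rt A[t,t^{-1}]$ is a localization of the polynomial ring $A[t]$, hence smooth; therefore its cotangent complex $L(A[t,t^{-1}]|A)$ is homotopic to a projective module concentrated in homological degree $0$, and consequently $D_j(A[t,t^{-1}]|A, N) = 0$ for every $j \geq 1$ and \emph{every} $A[t,t^{-1}]$-module $N$.

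Next I would transport this vanishing downward via the localization property of Andr\'e-Quillen homology. For a multiplicatively closed set $U \sub \eR(I)$ one has $U^{-1}D_j(\eR(I)|A, \eR(\F,M)) \cong D_j(U^{-1}\eR(I)|A, U^{-1}\eR(\F,M))$; this follows from the Jacobi--Zariski triangle attached to $A \rt \eR(I) \rt U^{-1}\eR(I)$ together with the vanishing of the cotangent complex of a localization map and the exactness of localization. Applying this with $U = \{(t^{-1})^k : k \geq 0\}$ and combining with the previous paragraph gives $D_j(\eR(I)|A, \eR(\F,M))_{t^{-1}} = 0$ for all $j \geq 1$.

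Now I would invoke finiteness. Since $\F$ is $I$-stable, $\eR(\F,M)$ is a finitely generated $\eR(I)$-module, and $\eR(I)$ is of finite type over $A$; hence, by the finiteness property recorded in the preliminaries (essentially finite type and finitely generated coefficients force $D_n$ to be finitely generated over the target), $D := D_j(\eR(I)|A, \eR(\F,M))$ is a finitely generated $\eR(I)$-module. Because $D_{t^{-1}} = 0$ and $D$ has only finitely many generators, a single power $t^{-s_j}$ annihilates each generator, and therefore $t^{-s_j}D = 0$, which is the asserted annihilation.

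For the ``in particular'' I would do a degree count. Write $D$ as generated by homogeneous elements $g_1, \ldots, g_m$ of degrees $e_1, \ldots, e_m$, and set $e = \min_i e_i$. If $n \leq e - s_j$ then any homogeneous $x \in D_n$ has the form $x = \sum_i \lambda_i g_i$ with $\lambda_i \in \eR(I)_{n - e_i}$; since $n - e_i \leq -s_j < 0$ and $\eR(I)_k = A\cdot(t^{-1})^{-k}$ for $k \leq 0$, each $\lambda_i$ is an $A$-multiple of $(t^{-1})^{e_i - n}$ with $e_i - n \geq s_j$, so $\lambda_i g_i \in t^{-s_j}D = 0$. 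Hence $D_n = 0$ for $n \ll 0$. The only genuinely delicate points are the finite generation of $D$ (which is exactly where $I$-stability of $\F$ is used) and the correct invocation of the localization property of $D_*$; the smoothness of $A[t,t^{-1}]$ over $A$ does the real work.
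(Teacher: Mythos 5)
Your proof is correct and follows essentially the same route as the paper: localize at $t^{-1}$, use that $\eR(I)_{t^{-1}} = A[t,t^{-1}]$ is smooth over $A$, and conclude from finite generation of the graded module $D_j(\eR(I)|A,\eR(\F,M))$. The only (harmless) streamlining is that you exploit vanishing of $D_j(A[t,t^{-1}]|A,N)$ for \emph{arbitrary} coefficients $N$, whereas the paper first identifies $\eR(\F,M)_{t^{-1}}\cong M[t,t^{-1}]$ via the sequence $0\rt\eR(\F,M)\rt M[t,t^{-1}]\rt L_\F(M)\rt 0$ before applying smoothness.
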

\begin{proof} Fix $j \geq 1$. We note that $D_j(\eR(I)|A, \eR(\F, M))$ is a finitely generated graded $\eR(I)$-module. So it suffices to show $D_j(\eR(I)|A, \eR(\F, M))_{t^{-1}} = 0$.
We have an exact sequence of $\eR(I)$-modules
\[
0 \rt \eR(\F, M) \rt M[t, t^{-1}] \rt L_\F(M) = \bigoplus_{n \in \Z}M/\F_n  \rt 0.
\]
We note that $L_\F(M)_n = 0$ for $n \ll 0$. In particular $L_\F(M)_{t^{-1}} = 0$. So \\ $\eR(\F, M)_{t^{-1}} \cong  M[t, t^{-1}]$.
We also have $\eR(I)_{t^{-1}} = A[t, t^{-1}]$ a smooth $A$-algebra.
We have for $j \geq 1$
\begin{align*}
   D_j(\eR(I)|A, \eR(\F, M))_{t^{-1}} &\cong D_j(\eR(I)_{t^{-1}}|A, \eR(\F, M)_{t^{-1}}) \\
  &=  D_j(A[t,t^{-1}]|A, M[t,t^{-1}]) = 0.
\end{align*}
The vanishing in the last line occurs since $A[t,t^{-1}]$ is a smooth $A$-algebra.
\end{proof}

\s Let $M$ be a finitely generated $A$-module. If $\F = \{ \F_n \}_{n \in \Z}$ is an $I$-stable filtration then set $G_\F(M) = \bigoplus_{n \in \Z}\F_n/\F_{n+1}$ the associated graded module of $M$ \ \wrt \ $\F$. We note $G_\F(M)_n = 0$ for $n \ll 0$. We have the following:
\begin{corollary}\label{g-l}
(with hypotheses as above). If for some $j \geq 2$ we have \\ $ D_j(\eR(I)|A, G_\F(M)) = 0$ then $D_{j-1}(\eR(I)|A, \eR(\F, M)) = 0$.
\end{corollary}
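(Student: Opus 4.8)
The plan is to relate the André-Quillen homology of the extended Rees module $\eR(\F,M)$ to that of the associated graded module $G_\F(M)$ through a long exact sequence coming from a natural short exact sequence of graded $\eR(I)$-modules. The crucial observation is that multiplication by $t^{-1}$ (the element of degree $-1$) fits $\eR(\F,M)$ and $G_\F(M)$ into a short exact sequence. Concretely, $\eR(\F,M) = \bigoplus_{n \in \Z}\F_n$ and the submodule $t^{-1}\eR(\F,M) = \bigoplus_{n \in \Z}\F_{n-1}$ sits inside it with quotient $\bigoplus_{n \in \Z}\F_n/\F_{n-1}$, which is (a degree shift of) $G_\F(M)$. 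This gives
\begin{equation*}
0 \rt \eR(\F, M)(-1) \xrightarrow{\ t^{-1}\ } \eR(\F, M) \rt G_\F(M) \rt 0.
\end{equation*}
Here the first map is multiplication by $t^{-1}$, viewed as a degree-preserving map after the twist $(-1)$.

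\textbf{Applying André-Quillen homology.}
Next I would apply the functor $D_*(\eR(I)|A, -)$ to this short exact sequence. Since André-Quillen homology sends short exact sequences in the coefficient module to long exact sequences (this is the standard long exact sequence in the third variable, and it respects the graded structure as noted in \ref{setup} of the preliminaries), we obtain for the fixed $j \geq 2$ a fragment
\begin{equation*}
D_j(\eR(I)|A, G_\F(M)) \rt D_{j-1}(\eR(I)|A, \eR(\F,M))(-1) \xrightarrow{\ t^{-1}\ } D_{j-1}(\eR(I)|A, \eR(\F,M)).
\end{equation*}
By hypothesis the leftmost term vanishes, so multiplication by $t^{-1}$ is injective on $N := D_{j-1}(\eR(I)|A, \eR(\F,M))$.

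\textbf{Concluding via the bounded-below vanishing from Lemma \ref{l}.}
The final step uses Lemma \ref{l}: since $j - 1 \geq 1$, that lemma gives a positive integer $s = s_{j-1}$ with $t^{-s}N = 0$, and in particular $N_n = 0$ for $n \ll 0$. But injectivity of multiplication by $t^{-1}$ means that if $N$ were nonzero, an element in its lowest nonzero degree would map to a nonzero element in a strictly lower degree, contradicting that $N$ is bounded below (equivalently, iterating injectivity shows $N \hookrightarrow t^{-s}N = 0$). Hence $N = 0$, which is exactly the claim $D_{j-1}(\eR(I)|A, \eR(\F,M)) = 0$. The step I expect to require the most care is verifying that the short exact sequence above is correctly degree-preserving and that the connecting maps in the long exact sequence are genuinely given by multiplication by $t^{-1}$ (so that the annihilation statement of Lemma \ref{l} can be leveraged); once that bookkeeping is pinned down, the conclusion is immediate.
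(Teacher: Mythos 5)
Your proof is correct and is essentially identical to the paper's: the same short exact sequence $0 \rt \eR(\F,M)(\pm 1) \xrightarrow{t^{-1}} \eR(\F,M) \rt G_\F(M) \rt 0$, the same long exact sequence giving injectivity of $t^{-1}$ on $D_{j-1}(\eR(I)|A,\eR(\F,M))$, and the same appeal to Lemma \ref{l} that this module is $t^{-1}$-torsion. The only blemish is the bookkeeping you yourself flagged: since the filtration is decreasing, the image of $t^{-1}$ in degree $n$ is $\F_{n+1}$ (not $\F_{n-1}$) and the quotient is $\F_n/\F_{n+1}$, so the twist should be $(+1)$ in the paper's convention; this does not affect the argument.
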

\begin{proof}
We have an exact sequence of graded $\eR(I)$-modules
$$ 0 \rt \eR(\F, M)(+1) \xrightarrow{t^{-1}} \eR(\F, M) \rt G_\F(M) \rt 0.$$
By considering the long exact sequence in homology we obtain an injective map
\[
0 \rt D_{j-1}(\eR(I)|A, \eR(\F, M))(+1) \xrightarrow{t^{-1}} D_{j-1}(\eR(I)|A, \eR(\F, M)).
\]
By \ref{l} we know that $D_{j-1}(\eR(I)|A, \eR(\F, M))$ is $t^{-1}$-torsion. The result follows.
\end{proof}

Next we prove:
\begin{lemma}\label{lemm}
Let $(A,\m)$ be a local Noetherian ring and let $I \subseteq \m$ be an ideal.
Let $M$ be an $A$-module and let $\F = \{ \F_n \}_{n \in \Z}$ be an $I$-stable filtration on $M$ with $\F_i = M$ for $i \leq 0$ and $\F_1 \neq F_0$.
Let $\eR(\F, M) = \bigoplus_{n \in Z}\F_n$ be the extended Rees module of $M$ \wrt \ $\F$ (considered as an $\eR(I)$-module).
Let $\R_\F(M) = \bigoplus_{n \geq 0}\F_n$ be the  Rees module of $M$ \wrt \ $\F$ (considered as an $\R(I)$-module).
For each $j \geq 0$ there exists $n(j)$ (depending on $j$) such that we have an isomorphism of $A$-modules
$$D_j(\R(I)|A, \R_\F(M))_n \cong D_j(\eR(I)|A, \eR(\F, M))_n  \quad \text{for} \ n \geq n(j).$$
\end{lemma}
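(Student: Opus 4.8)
The plan is to route the comparison through the intermediate module $D_j(\R(I)|A, \eR(\F, M))$, where $\eR(\F,M)$ is regarded as an $\R(I)$-module by restriction along the subring inclusion $\R(I) = \eR(I)_{\geq 0} \hookrightarrow \eR(I)$. I would establish the two high-degree isomorphisms
\[
D_j(\R(I)|A, \R_\F(M))_n \cong D_j(\R(I)|A, \eR(\F, M))_n, \qquad D_j(\R(I)|A, \eR(\F, M))_n \cong D_j(\eR(I)|A, \eR(\F, M))_n \quad (n \gg 0),
\]
and then compose them. Both steps rest on the fact that, $\R(I)$ and $\eR(I)$ being finitely generated graded $A$-algebras, the cotangent complexes $L(\R(I)|A)$ and $L(\eR(I)|A)$ are homotopy equivalent to complexes of finitely generated graded free modules (see \cite[6.11]{I}); in particular, for each fixed $j$ the free module in homological degree $j$ has its generators in degrees bounded above by some integer $B_j$. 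The whole argument is uniform in $j \geq 0$, so no separate treatment of $D_0, D_1$ is needed.

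For the first isomorphism I would restrict $\eR(\F, M)$ to $\R(I)$ and observe that its non-negative part is precisely $\R_\F(M)$, giving a short exact sequence of $\R(I)$-modules
\[
0 \rt \R_\F(M) \rt \eR(\F, M) \rt C \rt 0, \qquad C = \bigoplus_{n \leq -1}\F_n = \bigoplus_{n \leq -1} M,
\]
in which $C$ is concentrated in degrees $\leq -1$. Computing $D_j(\R(I)|A, C)$ as the homology of $L(\R(I)|A)\otimes_{\R(I)} C$ and using that the degree-$j$ free module has generators in degrees $\leq B_j$, the tensor product is concentrated in degrees $\leq B_j - 1$; hence its homology, being a subquotient, gives $D_j(\R(I)|A, C)_n = 0$ for $n \geq B_j$ (this needs only finite generation of the free modules, not of $C$). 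The long exact sequence of $D_*(\R(I)|A, -)$ attached to the short exact sequence then yields the first isomorphism for $n \geq \max(B_j, B_{j+1})$.

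For the second isomorphism I would feed $\eR(\F,M)$ into the Jacobi--Zariski sequence of $A \rt \R(I) \rt \eR(I)$,
\[
\cdots \rt D_j(\R(I)|A, \eR(\F, M)) \rt D_j(\eR(I)|A, \eR(\F, M)) \rt D_j(\eR(I)|\R(I), \eR(\F, M)) \rt \cdots,
\]
and show the relative term vanishes in high degrees. The key point is that inverting $a_i t$ collapses the inclusion: in $\eR(I)_{a_i t}$ one has $t^{-1} = a_i (a_i t)^{-1}$, so $\R(I)_{a_i t} \rt \eR(I)_{a_i t}$ is an isomorphism for each $i$. Consequently $\R(I) \rt \eR(I)_{a_i t}$ is a localization followed by an isomorphism, so $D_j(\eR(I)|\R(I), \eR(\F, M))_{a_i t} = 0$ for all $i$ and all $j$. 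Thus the finitely generated graded $\eR(I)$-module $E_j := D_j(\eR(I)|\R(I), \eR(\F, M))$ is supported on $V(a_1 t, \ldots, a_r t) = V(\eR(I)_+)$, hence annihilated by some power $(a_1 t, \ldots, a_r t)^k$. Since $(a_1 t, \ldots, a_r t)^k \supseteq \eR(I)_{\geq k}$ (as $I^n = I^k I^{n-k}$ for $n \geq k$) and $E_j$ is generated in degrees $\leq D$, the identity $(E_j)_n = \sum_{m \leq D}\eR(I)_{n-m}(E_j)_m$ forces $(E_j)_n = 0$ for $n \geq D + k$. The Jacobi--Zariski sequence then gives the second isomorphism in high degrees.

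I expect the main obstacle to be exactly this uniform (in $j$) vanishing of the relative term $D_j(\eR(I)|\R(I), \eR(\F, M))$. The localization identity $\R(I)_{a_i t} \cong \eR(I)_{a_i t}$ is what makes the vanishing simultaneous in all $j$, since it annihilates the entire relative cotangent complex after inverting any single $a_i t$; this reduces matters to the elementary graded fact that a finitely generated graded $\eR(I)$-module supported on the irrelevant locus is zero in large degrees. Composing the two isomorphisms produces the desired $n(j)$ and the asserted isomorphism $D_j(\R(I)|A, \R_\F(M))_n \cong D_j(\eR(I)|A, \eR(\F, M))_n$ for $n \geq n(j)$.
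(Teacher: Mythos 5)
Your proof is correct and takes essentially the same route as the paper: both compare the two modules through the intermediate term $D_j(\R(I)|A, \eR(\F,M))$, using the short exact sequence $0 \rt \R_\F(M) \rt \eR(\F,M) \rt C \rt 0$ (with $C$ concentrated in negative degrees) for the first step and the Jacobi--Zariski sequence of $A \rt \R(I) \rt \eR(I)$, with the relative term killed by localizing at the elements $a_it$ and then invoking $\eR(I)_+$-torsion, for the second. Your explicit degree bookkeeping for $D_j(\R(I)|A,C)$ and for the torsion module is just a more detailed version of what the paper leaves implicit.
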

\begin{proof}
We have an exact sequence of $\R(I)$-modules
\[
0 \rt \R_\F(M) \rt \eR(\F, M) \rt E \rt 0.
\]
We note that the cotangent complex of $\R(I)$ \ \wrt \ $A$ is homotopic to a complex of finitely generated graded free $\R(I)$-modules. As $E_n  = 0 $ for $n \gg 0$ it follows that given $j \geq 0$ there exists $n(j)^\prime$ depending on $j$ such that
$D_j(\R(I)|A, E)_n = 0$ for all $n \geq n(j)^\prime$. Set $n(j)^* = \max \{ n(j)^\prime, n(j+1)^\prime \}$.
Therefore for all $j \geq 0$ we have an isomorphism of $A$-modules
$$D_j(\R(I)|A, \R_\F(M))_n \cong D_j(\R(I)|A, \eR(\F, M))_n \quad \text{for all $n \geq n(j)^*$}. $$
Consider the Jacobi-Zariski sequence to $A \rt \R(I) \rt \eR(I)$ for $j \geq 0$
\[
D_j(\R(I)|A, \eR(\F, M)) \rt D_j(\eR(I)|A, \eR(\F, M)) \rt D_j(\eR(I)|\R(I), \eR(\F, M)).
\]
Let $I = (a_1, \cdots, a_s)$.  Set $X_i = a_it$. Note $\R(I)_{X_i} = \eR(I)_{X_i}$. We then have for $j \geq 1$
\[
D_j(\eR(I)|\R(I), \eR(\F, M))_{X_i} \cong D_j(\eR(I)_{X_i}|\R(I)_{X_i}, \eR(\F, M)_{X_i}) = 0.
\]
The last equality holds as $\R(I)_{X_i} = \eR(I)_{X_i}$.
We note that \\ $D_j(\eR(I)|\R(I), \eR(\F, M))$ is a finitely generated graded $\eR(I)$-module. By the above calculation we have that $D_j(\eR(I)|\R(I), \eR(\F, M))$ is $\eR(I)_+$-torsion. Therefore
$D_j(\eR(I)|\R(I), \eR(\F, M))_n = 0$ for $n \gg 0$. The result follows.
\end{proof}
\section{Proofs of Theorems \ref{first} and \ref{second}}
In this section we give proofs of Theorem \ref{first} and \ref{second}.
We first state a few preliminary results first.

\s \label{base-change} We need to change base to the case when $A$ has infinite residue field. If the residue field of $A$ is finite then  let $B = A[Y]_{\m A[Y]}$ with maximal ideal $\n = \m B$. We note that the residue
field of $B$ is $k(Y)$ which is infinite. Let $I = (a_1, \ldots, a_r)$ be an ideal in $A$. Set $T = B[IBt] = \R(I)\otimes_A B$ and $\wh{T} = B[IB, t^{-1}] = \eR(I)\otimes_A B$.

Consider the map $\epsilon \colon S = A[X_1, \ldots, X_r]  \rt \R(I)$ which maps $X_i \rt a_it$ for all $i$. Let $J = \ker \epsilon$.
Consider $\epsilon \otimes_A B \colon  U \rt T$ where $U = B[X_1, \ldots, X_r]  = S\otimes_A B$. We note that $J_B := \ker \epsilon\otimes B = J\otimes_A B$.

Consider the map $\wh{\epsilon} \colon \wh{S} = A[X_1, \ldots, X_r, V]  \rt \eR(I)$ which maps $X_i \rt a_it$ for all $i$ and $V$ is mapped to $t^{-1}$. Let $\wh{J} = \ker \wh{\epsilon}$.
Consider $\wh{\epsilon}\otimes_A B \colon  \wh{U} \rt \wh{T}$ where $\wh{U} = B[X_1, \ldots, X_r, V]  = \wh{S}\otimes_A B$. We note that $\wh{J_B }:= \ker \wh{\epsilon}\otimes B = \wh{J}\otimes_A B$.

\begin{lemma}\label{inf-bc}
(with hypotheses as in \ref{base-change}).  Further assume $A$ is \CM \ and $\height I > 0$.
 We have
\begin{enumerate}[\rm (I)]
  \item The following assertions are equivalent:
  \begin{enumerate}[\rm (a)]
    \item $H_1(J)$ is an unmixed and equi-dimensional $\R(I)$-module.
    \item $H_1(J_B)$ is an unmixed and equi-dimensional $T$-module.
    \end{enumerate}
    \item The following assertions are equivalent:
  \begin{enumerate}[\rm (a)]
  \item $H_1(\wh{J})$ is an unmixed and equi-dimensional $\eR(I)$-module.
  \item $H_1(\wh{J_B})$ is an unmixed and equi-dimensional $\wh{T}$-module.
  \end{enumerate}
  \item Fix $n \in \Z$. The following assertions are equivalent:
  \begin{enumerate}[\rm (a)]
  \item $D_2(\R(I)|A, \R(I))_n = 0$
  \item $D_2(T|B, T)_n = 0$.
  \end{enumerate}
  \item Fix $n \in \Z$. The following assertions are equivalent:
  \begin{enumerate}[\rm (a)]
  \item $D_2(\eR(I)|A, \eR(I))_n = 0$
  \item $D_2(\wh{T}|B, \wh{T})_n = 0$.
  \end{enumerate}
   \item The following assertions are equivalent:
   \begin{enumerate}[\rm (a)]
     \item $\Ass H_1(J) \subseteq \Ass \R(I) \cup V(\R(I)_+)$.
     \item $\Ass H_1(J_B) \subseteq \Ass T \cup V(T_+)$.
   \end{enumerate}
 \item The following assertions are equivalent:
   \begin{enumerate}[\rm (a)]
     \item $\Ass H_1(\wh{J}) \subseteq \Ass \eR(I) \cup V(\eR(I)_+)$.
     \item $\Ass H_1(\wh{J_B}) \subseteq \Ass \wh{T} \cup V(\wh{T}_+)$.
   \end{enumerate}
\end{enumerate}
\end{lemma}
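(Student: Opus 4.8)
The plan is to exploit that $A \to B = A[Y]_{\m A[Y]}$ is a faithfully flat \emph{local} homomorphism, $(A,\m) \to (B,\n)$ with $\n = \m B$, whose closed fibre $B/\m B = k(Y)$ is a field, and to push every one of the six equivalences through the induced faithfully flat degree-preserving maps $\R(I) \to T = \R(I)\otimes_A B$ and $\eR(I) \to \wh{T} = \eR(I)\otimes_A B$. These are again $*$-local, with $*$-maximal ideals $\M T$ and $\M\wh{T}$, and for a graded prime $P$ the ring $T/PT = (\R(I)/P)\otimes_A B$ is a localization of the polynomial domain $(\R(I)/P)[Y]$, hence a domain. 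I will write the argument only for the Rees algebra; parts (II), (IV), (VI) are handled verbatim with $\eR(I), \wh{T}, \wh{J}$ in place of $\R(I), T, J$.

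First I establish two base-change isomorphisms. Since $B$ is flat over $A$ and $J_B = J\otimes_A B$, the chosen generators of $J$ map to generators of $J_B$, the Koszul complex base-changes, and flatness commutes with homology, giving a degree-preserving isomorphism $H_1(J_B)\cong H_1(J)\otimes_{\R(I)}T$. For Andr\'e--Quillen homology I invoke the flat base-change theorem: as $A\to B$ is flat, $L(T|B)\simeq L(\R(I)|A)\otimes_{\R(I)}T$, and since $L(\R(I)|A)$ is a complex of free $\R(I)$-modules while $T$ is $\R(I)$-flat, tensoring commutes with homology, so
$$D_2(T|B,T)\cong D_2(\R(I)|A,\R(I))\otimes_{\R(I)}T\cong D_2(\R(I)|A,\R(I))\otimes_A B.$$
Parts (III) and (IV) follow at once: $B$ lives in degree $0$, so the $n$-th graded piece is $D_2(\R(I)|A,\R(I))_n\otimes_A B$, which by faithful flatness vanishes if and only if $D_2(\R(I)|A,\R(I))_n$ does.

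Next I record the behaviour of primes. For a graded prime $P$ of $\R(I)$ the ideal $PT$ is prime with $PT\cap\R(I)=P$, and I claim $\dim T/PT = \dim\R(I)/P$. Both sides equal the dimension of the respective localizations at the $*$-maximal ideals, and $\R(I)_{\M}\to T_{\M T}$ is a flat local homomorphism whose closed fibre is $(B/\m B)$ localized, i.e.\ a field; the fibre-dimension formula then gives $\dim(T/PT)_{\M T}=\dim(\R(I)/P)_{\M}$. Combining this with the standard description of associated primes under a flat map,
$$\Ass_T(N\otimes_{\R(I)}T) = \bigcup_{P\in\Ass_{\R(I)}N}\Ass_T(T/PT) = \{\,PT : P\in\Ass_{\R(I)}N\,\},$$
I obtain a dimension- and inclusion-preserving bijection $P\mapsto PT$ between $\Ass_{\R(I)}N$ and $\Ass_T(N\otimes_{\R(I)}T)$ that also matches minimal primes of the supports. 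Applying this to $N = H_1(J)$ gives (I): every associated prime of $H_1(J)$ is minimal in its support precisely when the same holds for $H_1(J_B)$, and $\dim\R(I)/P=\dim T/PT$ transports equi-dimensionality in both directions. For (V) I note $\Ass_T T = \{\mathfrak{q}T : \mathfrak{q}\in\Ass\R(I)\}$ and that $PT\supseteq T_+ = \R(I)_+T$ if and only if $P\supseteq\R(I)_+$ (contract along the faithfully flat map); hence $\Ass_T H_1(J_B)\subseteq\Ass T\cup V(T_+)$ is, term by term under $P\mapsto PT$, equivalent to $\Ass H_1(J)\subseteq\Ass\R(I)\cup V(\R(I)_+)$.

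The main obstacle is the dimension identity $\dim T/PT = \dim\R(I)/P$ in the graded, non-local setting: one must first reduce the Krull dimension of the graded domains $\R(I)/P$ and $T/PT$ to the dimension of their localizations at the $*$-maximal ideals, and verify that $\M T$ is indeed the $*$-maximal ideal of $T$ with $0$-dimensional closed fibre (and analogously $\M\wh{T}$ for the $\Z$-graded extended case). Once this is in place, the rest is a routine application of faithfully flat descent together with the flat base-change theorems for Koszul and Andr\'e--Quillen homology.
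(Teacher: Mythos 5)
Your treatment of parts (III)--(VI) is correct and essentially the paper's own argument: flat base change for Andr\'e--Quillen homology plus faithfully flat descent on graded pieces for (III)--(IV), and the correspondence $P \mapsto PT$ on associated primes (via Matsumura's Theorem 23.3, i.e.\ $\Ass_T(N\otimes_{\R(I)}T)=\bigcup_{P\in\Ass N}\Ass_T(T/PT)$, sharpened by your observation that $T/PT$ is a domain) for (V)--(VI). The problem is in (I)--(II), where your argument hinges on the identity $\dim T/PT=\dim\R(I)/P$ for the relevant graded primes, which you yourself flag as ``the main obstacle'' and do not prove. This is a genuine gap, and the step you propose to fill it with is delicate: the reduction ``Krull dimension of the graded quotient equals the dimension of its localization at the $*$-maximal ideal'' is \emph{false} for general $\Z$-graded rings (e.g.\ $k[t,t^{-1}]$ has dimension $1$ while its only graded prime is $(0)$), so in the extended Rees algebra case ($\eR(I)/P$ and $\wh{T}/P\wh{T}$, needed for part (II)) it requires a real argument specific to $\eR(I)$, not just a citation. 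As written, the proof of (I)--(II) is incomplete.

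It is worth noting how the paper avoids this issue entirely. Since $S$ (resp.\ $\wh{S}$) is \CM, the nonzero Koszul homology $H_1(J)$ has dimension equal to $\dim\R(I)$ (see \ref{ff}), and $\R(I)$, $T$, $\eR(I)$, $\wh{T}$ are themselves unmixed and equi-dimensional (Theorem \ref{unmixed}). Hence ``$H_1(J)$ is unmixed and equi-dimensional'' is \emph{equivalent} to the purely prime-theoretic condition $\Ass H_1(J)\subseteq\Ass\R(I)$: one direction because an associated prime of an unmixed equi-dimensional module of dimension $\dim\R(I)$ must be a minimal prime of $\R(I)$, the other because $\Ass\R(I)$ consists only of minimal primes $P$ with $\dim\R(I)/P=\dim\R(I)$. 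The condition $\Ass H_1(J)\subseteq\Ass\R(I)$ then transports across the faithfully flat map exactly as in your parts (V)--(VI), with no comparison of $\dim T/PT$ against $\dim\R(I)/P$ needed. If you want to salvage your route instead, you must actually prove the dimension identity for graded primes of $\eR(I)$; the paper's reduction is the cleaner fix.
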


To prove this result we need Theorem 23.3 from \cite{Mat}.   Unfortunately  there is a typographical error in the statement of Theorem 23.3 in \cite{Mat}.
So we state it here.
\begin{theorem}\label{23}
Let $\varphi \colon A \rt B$ be a homomorphism of Noetherian rings, and let $E$ be an $A$-module and $G$ a $B$-module. Suppose that
$G$ is flat over $A$; then we have the following:
\begin{enumerate}[\rm (i)]
\item
if $\fp \in \Spec A$ and $G/\fp G \neq 0$ then
\[
 ^a \varphi \left( \Ass_B(G/\fp G)  \right) = \Ass_A (G/\fp G) = \{\fp \}.
\]
\item
$\displaystyle{\Ass_B(E\otimes_A G) = \bigcup_{\fp \in \Ass_A(E) } \Ass_B(G/\fp G).}$
\end{enumerate}
\end{theorem}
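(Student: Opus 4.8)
The plan is to prove the two parts in order, deriving (ii) from (i) together with a prime filtration and a localization/depth argument.

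For part (i), I would first analyze $G/\fp G = G\otimes_A A/\fp$ as a module over the \emph{domain} $A/\fp$. Since $G$ is flat over $A$, this is a flat, hence torsion-free, $A/\fp$-module, and it is nonzero by hypothesis; a nonzero torsion-free module over a domain has its unique $(A/\fp)$-associated prime equal to $(0)$, so $\Ass_A(G/\fp G) = \{\fp\}$. For the $B$-side, observe that $\fp B$ annihilates $G/\fp G$, so every $\fq \in \Ass_B(G/\fp G)$ contains $\fp B$ and thus $\varphi^{-1}(\fq)\supseteq \fp$. Conversely, any $a\in A\setminus\fp$ acts as a nonzerodivisor on the torsion-free $(A/\fp)$-module $G/\fp G$, and since this action is multiplication by $\varphi(a)$, the element $\varphi(a)$ avoids every associated prime of $G/\fp G$ over $B$; hence $\varphi^{-1}(\fq)\sub \fp$. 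Combining, $^a\varphi(\fq)=\fp$ for each such $\fq$, which is (i).

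For part (ii), the inclusion $\supseteq$ is immediate: if $\fp\in\Ass_A E$ there is an embedding $A/\fp\hookrightarrow E$; tensoring with the flat module $G$ preserves injectivity, giving $G/\fp G\hookrightarrow E\otimes_A G$, and associated primes of a submodule are associated primes of the ambient module, so $\Ass_B(G/\fp G)\sub \Ass_B(E\otimes_A G)$. For the reverse inclusion I would fix $\fq\in\Ass_B(E\otimes_A G)$ and set $\fp=\varphi^{-1}(\fq)$. Writing $\fq=\ann_B(z)$ with $z=\sum_i e_i\otimes g_i$ and using flatness (so that $E'\otimes_A G\hookrightarrow E\otimes_A G$ for the finite submodule $E'=\sum_i Ae_i$), the annihilator of $z$ is unchanged, so I may assume $E$ is finitely generated. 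Then $E$ carries a prime filtration with quotients $A/\fp_i$; tensoring by the flat module $G$ yields a filtration of $E\otimes_A G$ with quotients $G/\fp_i G$, whence $\fq\in\Ass_B(G/\fp_i G)$ for some $i$. By part (i) this forces $\fp_i=\varphi^{-1}(\fq)=\fp$, so $\fq\in\Ass_B(G/\fp G)$.

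It then remains to see that this $\fp$ actually lies in $\Ass_A E$. Here I would localize, passing to $A_\fp$, $E_\fp$, $B_\fq$, $G_\fq$. Because $\fp=\varphi^{-1}(\fq)$, every element of $A\setminus\fp$ becomes a unit in $B_\fq$, so $G_\fq$ is naturally an $A_\fp$-module, flat over $A_\fp$, and one checks $(E\otimes_A G)_\fq\cong E_\fp\otimes_{A_\fp}G_\fq$ with $\fq B_\fq\in\Ass_{B_\fq}(E_\fp\otimes_{A_\fp}G_\fq)$. If $\fp A_\fp$ were not associated to $E_\fp$, then (as $E_\fp\neq 0$) there is $a\in\fp A_\fp$ a nonzerodivisor on $E_\fp$; by flatness $\varphi(a)$ is a nonzerodivisor on $E_\fp\otimes_{A_\fp}G_\fq$, contradicting $\varphi(a)\in\fq B_\fq$, whose elements are zerodivisors. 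Hence $\fp\in\Ass_A E$, and $\fq\in\Ass_B(G/\fp G)$ with $\fp\in\Ass_A E$ gives the inclusion $\sub$. I expect the main obstacle to be exactly this last step: correctly tracking the two spectra under localization, verifying the identification $(E\otimes_A G)_\fq\cong E_\fp\otimes_{A_\fp}G_\fq$ together with the flatness of $G_\fq$ over $A_\fp$, and producing the nonzerodivisor $a$ that yields the contradiction.
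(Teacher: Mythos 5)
Your proof is correct, but there is nothing in the paper to compare it against: the author does not prove this statement at all. It is quoted verbatim from Matsumura [Mat, Theorem 23.3] solely to correct a typographical error in the book (namely $\Ass_A(E\otimes G)$ printed in place of $\Ass_B(E\otimes G)$), and the paper relies on the reference for the proof. Your argument is essentially the standard textbook one and all the steps check out: part (i) follows from the fact that $G/\fp G$ is a nonzero flat, hence torsion-free, module over the domain $A/\fp$, so every $a\in A\setminus\fp$ acts as a nonzerodivisor and therefore avoids every associated prime on both the $A$- and $B$-sides; the inclusion $\supseteq$ in (ii) is flat base change of an embedding $A/\fp\hookrightarrow E$; and the inclusion $\subseteq$ is handled by reducing to a finitely generated submodule $E'$ containing the components of the element whose annihilator is $\fq$, taking a prime filtration, and invoking (i) to identify $\fp_i$ with $\varphi^{-1}(\fq)$. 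The one step that genuinely requires care is the last one, showing $\fp\in\Ass_A E$, and you handle it correctly: the localization $(E\otimes_A G)_\fq\cong E_\fp\otimes_{A_\fp}G_\fq$ with $G_\fq$ flat over $A_\fp$, together with prime avoidance to produce an $E_\fp$-regular element of $\fp A_\fp$ if $\fp A_\fp\notin\Ass E_\fp$, yields the contradiction with $\fq B_\fq$ consisting of zerodivisors. Note only that prime avoidance here uses finiteness of $\Ass_{A_\fp}(E_\fp)$, which is legitimate precisely because you carried the reduction to finitely generated $E$ through to this step (and $\Ass E'\subseteq\Ass E$ transfers the conclusion back), and that the equality in (i), as opposed to a mere inclusion, uses $\Ass_B(G/\fp G)\neq\emptyset$, guaranteed by $G/\fp G\neq 0$ over the Noetherian ring $B$.
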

\begin{remark}
In  \cite{Mat} $\Ass_A(E\otimes G)$ is typed instead of $\Ass_B(E\otimes G)$. Also note that
$^a \varphi(\fP) = \fP \cap A$ for $\fP \in \Spec B$.
\end{remark}
We now give
\begin{proof}[Proof of Lemma \ref{inf-bc}]
(I) We note that $T$ is faithfully flat over $R$ and $H_1(J_B) = H_1(J)\otimes_R T$. To prove the result we may assume $H_1(J) \neq 0$.
By Theorem \ref{23} we have
$$ \Ass_T H_1(J_B) = \bigcup_{\fp \in \Ass_{\R(I)} H_1(J) } \Ass_T T/\fp T. $$
We also have
$$ \Ass_T T = \bigcup_{\fp \in \Ass_{\R(I)} \R(I) } \Ass_T T/\fp T. $$
We also note that $\dim H_1(J) = \dim \R(I)$ and $\dim H_1(J_B) = \dim T$; see \ref{ff}. We also have  $\dim \R(I) = \dim T$.

(a) $\implies$ (b) We note that as $H_1(J)$ is
an unmixed and equi-dimensional $\R(I)$-module and $\dim H_1(J) = \dim \R(I)$; it follows that $\Ass H_1(J) \subseteq \Ass \R(I)$. By the above equality we have
$\Ass_T H_1(J_B) \subseteq \Ass_T T$. As $T$ is unmixed and equi-dimensional; see \ref{unmixed}; the result follows.

(b) $\implies$ (a)
 We note that as $H_1(J_B)$ is
an unmixed and equi-dimensional $T$-module and $\dim H_1(J_B) = \dim T$; it follows that $\Ass H_1(J_B) \subseteq \Ass T$.
Let $\fp \in \Ass H_1(J)$. As $T$ is faithfully flat over $\R(I)$ we get that $T/\fp T \neq 0$. Let $Q \in \Ass_T  T/\fp T$. Then note that as $Q \in \Ass T$ we get that $\fp = Q\cap \R(I) \in \Ass \R(I)$. So $\Ass H_1(J) \subseteq \Ass \R(I)$.  As $\R(I)$ is unmixed and equi-dimensional; see \ref{unmixed}; the result follows.

(II) This follows as in (I).

(III) By \cite[6.3]{I} we have for all $j \geq 0$ we have a graded isomorphism
$$D_j(T|B, T) = D_j(\R(I)|A, T)$$
We note that $D_j(\R(I)|A, T) = D_j(\R(I)|A, \R(I))\otimes B$. So  for all $n \in \Z$
$$D_j(T|B, T)_n = D_j(\R(I)|A, \R(I))_n\otimes B. $$
The result follows as $B$ is faithfully flat over $A$.

(IV) This follows as in (III).

(V) We note that if $\fp \in \Spec \R(I)$ and $Q\in \Ass_T T/\fp T$ then
\begin{enumerate}
  \item $Q\cap \R(I) = \fp.$.
  \item $\fp \supseteq \R(I)_+$ if and only if $Q \supseteq T_+$.
  \item $\fp \in \Ass \R(I)$ if and only if $Q \in \Ass T$.
\end{enumerate}
(a) $\implies$ (b) Let $Q \in \Ass_T H_1(J_B)$. Then $Q \in \Ass T/\fp T$ for some \\ $\fp \in \Ass_{\R(I)} H_1(J)$. If $\fp \supseteq \R(I)_+$ then $Q \supseteq T_+$. If $\fp \in \Ass \R(I)$ then $Q \in \Ass T$.

(b) $\implies$ (2). Let $\fp \in \Ass H_1(J)$. The map $\R(I) \rt T$ is faithfully flat. So $T/\fp T \neq 0$. Let $Q \in \Ass_T T/\fp T$. Then $Q \in \Ass_T H_1(J_B)$ and $Q \cap \R(I) = \fp$.
If $Q \supseteq T_+$ then $\fp \supseteq \R(I)_+$. If $Q \in \Ass T$ then $\fp \in \Ass \R(I)$. The result follows.

(VI) This is as in (V).
\end{proof}

We give
\begin{proof}[Proof of Theorem \ref{first}]
.

(II) We note that $D_2(\eR(I)|A, \eR(I)) = D_2(\eR(I)|\wh{S}, \eR(I))$. We also have $\eR(I)$ is unmixed and equi-dimensional (see \ref{unmixed}).
By \ref{ss} we have an exact sequence
\[
0 \rt D_2(\eR(I)|\wh{S}, \eR(I)) \rt H_1(\wh{J}) \rt \eR(I)^s \cdots.
\]

(II) (b) $\implies$ (a) follows as $\eR(I)$ is unmixed and equi-dimensional, see \ref{unmixed}.

(a) $\implies$ (b). Suppose if possible $D_2(\eR(I)|\wh{S}, \eR(I)) \neq 0$. By Lemma \ref{l} we have $D_2(\eR(I)|\wh{S}, \eR(I))$ is $t^{-1}$-torsion. So $\dim D_2(\eR(I)|\wh{S}, \eR(I)) \leq d$.
But $H_1(I)$ is unmixed and equi-dimensional; and has dimension $= \dim \eR(I) = d +1$. This is a contradiction.

(I) We note that $D_2(\R(I)|A, \R(I)) = D_2(\eR(I)|{S}, \R(I))$. We also have $\eR(I)$ is unmixed and equi-dimensional (see \ref{unmixed}).
By \ref{ss} we have an exact sequence
\[
0 \rt D_2(\R(I)|S, \R(I)) \rt H_1(J) \rt \R(I)^l \cdots.
\]

(I) (b) $\implies$ (a) follows as $\R(I)$ is unmixed and equi-dimensional.

(a) $\implies$ (b). By \ref{inf-bc} we may assume that the residue field $k$ of $A$ is infinite. Let $I = (a_1, \ldots, a_s)$. By \ref{generic}, we may assume that each $a_i$ is $A$-regular. Set $X_i = a_it$ for $i = 1, \ldots, s$. Suppose if possible $E = D_2(\R(I)|{S}, \R(I)) \neq 0$.

 Claim-1 $E$ is $\R(I)_+$-torsion.

 Suppose if possible this is not true.
then $E_{X_i} \neq 0$ for some $i$. But
\begin{align*}
   E_{X_i} = D_2(\R(I)|A, \R(I))_{X_i}& = D_2(\R(I)_{X_i}|A, \R(I)_{X_i}) \\
  &= D_2(\eR(I)_{X_i}|A, \eR(I)_{X_i}) \\
  &=  D_2(\eR(I)|A, \eR(I))_{X_i}.
\end{align*}
So $E_{X_i}$ is $t^{-1}$-torsion (by \ref{l}).
By hypothesis $H_1(J)$ is unmixed, equi-dimensional  graded $\R(I)$-module of dimension $\dim \R(I)$.
We have $\Ass H_1(J)_{X_i} \subseteq \Ass \R(I)_{X_i}$. But $\R(I)_{X_i} = \eR(I)_{X_i}$.
We have $\Ass \R(I)_{X_i} = \{ Q_{X_i} \mid Q \in \Ass \eR(I) \}$.

Claim-2:  $t^{-1} \notin  Q_{X_i}$ for every $ Q \in \Ass \eR(I)$.

Suppose Claim-2 is true. Then $t^{-1}$ is $H_1(J)_{X_i}$-regular. But  $E_{X_i}$ is $t^{-1}$-torsion and a submodule of $H_1(J)_{X_i}$. This is a contradiction. So Claim-1 follows.

We prove Claim-2. Suppose  $t^{-1} \in  Q_{X_i}$ for some $ Q \in \Ass \eR(I)$. As $\eR(I)$ is unmixed, by \cite[4.5.5]{BH} we have $Q = PA[t, t^{-1}]\cap \eR(I)$ for some minimal prime $P$ of $A$.
We have $X_i^l t^{-1} \in Q$ for some $l \geq 0$. Note that $l = 0$ is not possible. Also note that if $l \geq 1$ then $a_i \in P$. This is a contradiction as $a_i$ is $A$-regular and $P \in \Ass A$.

So $E_{X_i} = 0$ for all $i$.
Thus $E$ is a finitely generated $\R(I)/\R(I)_+^m$  module for some $m \geq 1$. So $E$ is a finitely generated $A$-module. Therefore $\dim E \leq d$. But $\dim H_1(J)$ is $d + 1$ and it is unmixed and equi-dimensional. This is a contradiction. So $E = 0$.
\end{proof}

Next we give
\begin{proof}[Proof of Theorem \ref{second}]
The assertion (iii) $ \Leftrightarrow$ (iv) follows from Lemma \ref{lemm}.
For the rest of the assertions  we first note that $D_2(\eR(I)|A, \eR(I)) = D_2(\eR(I)|\wh{S}, \eR(I))$ and  $D_2(\R(I)|A, \R(I)) = D_2(\eR(I)|{S}, \R(I))$.

By \ref{ss} we have an exact sequence
\[
0 \rt D_2(\eR(I)|\wh{S}, \eR(I)) \rt H_1(\wh{J}) \rt \eR(I)^s \cdots.
\]
(iv) $\implies$ (ii):
 We note that $D_2(\eR(I)|{S}, \R(I))$ is a finitely generated graded $\R(I)$-module. As  $D_2(\eR(I)|{S}, \R(I))_n = 0$ for $n \gg 0$ it follows that $D_2(\eR(I)|{S}, \R(I))$
is $\R(I)_+$-torsion. The result follows from the above exact sequence.

(iii) $\implies$ (i): This is similar to (iv) $\implies$ (ii).

(i)  $\implies$ (iii) and  (ii)  $\implies$ (iv):
By \ref{inf-bc} we may assume that the residue field $k$ of $A$ is infinite. Let $I = (a_1, \ldots, a_s)$. By \ref{generic} we may assume that each $a_i$ is $A$-regular.
 Set $X_i = a_it$ for $i = 1, \ldots, s$. We also note that
\begin{align*}
  E =   D_2(\R(I)|A, \R(I))_{X_i}&\cong  D_2(\R(I)_{X_i}|A, \R(I)_{X_i}) \\
  &= D_2(\eR(I)_{X_i}|A, \eR(I)_{X_i}) \\
  &=  D_2(\eR(I)|A, \eR(I))_{X_i}.
\end{align*}
Suppose if possible $E \neq 0$. By \ref{l} we get that $E$ is $t^{-1}$-torsion.  By our hypotheses $\Ass H_1(J)_{X_i} \subseteq \Ass \R(I)_{X_i}$ and $\Ass H_1(\wh{J})_{X_i} \subseteq \Ass \eR(I)_{X_i}$.
 We note that $\R(I)_{X_i} = \eR(I)_{X_i}$.
By an argument similar to that  of Claim-2 in  proof of Theorem \ref{first},  we get that $t^{-1}$ is $H_1(J)_{X_i}$ and $H_1(\wh{J})_{X_i}$-regular. Also $t^{-1}$ is $E$-torsion. This is a contradiction. So $E$ is zero. It follows that $D_2(\R(I)|A, \R(I))$  (and $D_2(\eR(I)|A, \eR(I))$) is supported at $V(\R(I)_+)$ (respectively $V(\eR(I)_+)$. The result follows.
\end{proof}
\section{Proof of Theorem \ref{ex-2-thm}}
In this section we first show that while proving \ref{ex-2-thm} we can assume that $A$ is complete.
\begin{lemma}\label{comp-2}
(with hypotheses as in \ref{ex-2-thm}) Let $A^*$ be the completion of $A$. Set $\eR = \eR(I)$ and  $T = A^*[IA^*t, t^{-1}] = \eR \otimes_A A^*$.   The natural map $\wh{S}\otimes_A A^* \rt T$ has kernel $\wh{J}^* = \wh{J} \otimes_A A^*$. We note that $H_1(\wh{J}^*) = H_1(\wh{J})\otimes_{\eR} T$. We have
\begin{enumerate}[\rm (I)]
  \item The following assertions are equivalent:
  \begin{enumerate}[\rm (a)]
    \item $\eR$ is a complete intersection.
    \item $T$ is a complete intersection.
  \end{enumerate}
  \item The following assertions are equivalent:
  \begin{enumerate}[\rm (a)]
    \item $H_1(\wh{J})$ is a free $\eR$-module.
    \item $H_1(\wh{J}^*)$ is a free $T$-module.
  \end{enumerate}
  \item The following assertions are equivalent:
  \begin{enumerate}[\rm (a)]
  \item $D_3(\eR|A, \eR) = 0$.
  \item $D_3(T|A^*, T) = 0$.
  \end{enumerate}
\end{enumerate}
\end{lemma}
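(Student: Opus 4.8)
The entire lemma rests on a single observation about the completion map. Since $(A,\m)$ is local Noetherian, $A \to A^*$ is faithfully flat and local with $\m A^* = \m^*$ and $A^*/\m^* = k$; hence $\eR \to T = \eR \otimes_A A^*$ is faithfully flat. Writing $\n$ for the $*$-maximal ideal of $\eR$ (see \ref{ff}), I would first record that $\n T$ is the $*$-maximal ideal of $T$ with field closed fibre: using $T \otimes_{\eR} (\eR/\n) \cong A^* \otimes_A (\eR/\n)$ one computes
\[ T/\n T \cong A^* \otimes_A k = A^*/\m A^* = k. \]
Thus $\M := \n T$ is $*$-maximal in $T$ with $T/\M = k$, and the induced map of localizations $\eR_{\n} \to T_{\M}$ is flat local with closed fibre the field $k$. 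All three equivalences will follow from this together with faithful flatness, and --- crucially --- no excellence hypothesis on $A$ is needed, precisely because this fibre is automatically a field.

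For (III) I would argue exactly as in \ref{inf-bc}(III). By the flat base change theorem \cite[6.3]{I} there are graded isomorphisms
\[ D_j(T|A^*, T) \cong D_j(\eR|A, T) \cong D_j(\eR|A, \eR) \otimes_A A^* \qquad (j \geq 0), \]
the last isomorphism because the cotangent complex $L(\eR|A)$ consists of free $\eR$-modules, so tensoring its homology with the flat extension $A \to A^*$ computes the coefficients in $T = \eR \otimes_A A^*$. Taking $j = 3$ and using that $A^*$ is faithfully flat over $A$ yields that $D_3(T|A^*, T) = 0$ if and only if $D_3(\eR|A, \eR) = 0$.

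For (II) I would use the Tor-criterion for freeness over a $*$-local ring: a finitely generated graded module $M$ over $\eR$ is free if and only if $\Tor_1^{\eR}(M, k) = 0$, by minimality of the graded free resolution (the case $M = 0$ being included, as $0$ is free). Since $\eR \to T$ is flat and $k \otimes_{\eR} T = T/\n T = k$, flat base change for Tor gives
\[ \Tor_1^{\eR}(H_1(\wh{J}), k) \otimes_{\eR} T \cong \Tor_1^{T}(H_1(\wh{J}^{*}), k), \]
using the identification $H_1(\wh{J}^{*}) = H_1(\wh{J}) \otimes_{\eR} T$ from the statement. Faithful flatness of $T$ over $\eR$ then makes the left-hand module vanish exactly when the right-hand one does, and applying the same criterion over the $*$-local ring $T$ (again with residue field $k$) identifies these vanishings with the freeness of $H_1(\wh{J})$ over $\eR$ and of $H_1(\wh{J}^{*})$ over $T$.

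The main obstacle is (I). Here the plan is to read the statement ``$\eR$ (resp.\ $T$) is a complete intersection'' as the assertion that the localization $\eR_{\n}$ (resp.\ $T_{\M}$) at the $*$-maximal ideal is a complete intersection local ring, and then to invoke descent of the complete intersection property along flat local homomorphisms: for a flat local map $(R, \m_R) \to (S, \m_S)$ the ring $S$ is a complete intersection if and only if both $R$ and the closed fibre $S/\m_R S$ are complete intersections (Avramov; see also \cite{BH}). Applied to $\eR_{\n} \to T_{\M}$, whose closed fibre is the field $k$ computed in the first paragraph, this reduces immediately to the required equivalence between $\eR_{\n}$ and $T_{\M}$ being complete intersection local rings. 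The delicate points are exactly the fibre computation above --- which is what lets us avoid assuming $A$ excellent --- and the precise invocation of the flat-local complete-intersection theorem in both directions.
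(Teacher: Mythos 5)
Your proposal is correct and follows essentially the same route as the paper: part (III) via flat base change of Andr\'e--Quillen homology \cite[6.3]{I} and faithful flatness of $A^*$ over $A$, part (II) via the $\Tor_1(-,k)$ criterion for freeness together with flat base change of Tor, and part (I) via the flat local homomorphism theorem for complete intersections \cite[4.3.8]{MR} applied to $\eR \rt T$, whose closed fibre is the field $k$. The only difference is that you spell out the fibre computation and the graded/$*$-local bookkeeping that the paper leaves implicit.
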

\begin{proof}
(I) The map $\eR \rt T$ is flat by fiber $k$. The result follows from \cite[4.3.8]{MR}.

(II) We have $\Tor^T_1(k, H_1(\wh{J}^*) ) \cong \Tor^{\eR}_1(k, H_1(J)) \otimes_{\eR} T$. The result follows as $T$ is a faithfully flat $\eR$-algebra.

(III) As $A^*$ is a flat $A$-algebra we have
$D_3(T|A^*, T) =  D_3(\eR|A, T)$, see \cite[6.3]{I}. Also note that $D_3(\eR|A, T) = D_3(\eR|A, \eR)\otimes_A A^*$. The result follows as $A^*$ is a faithfully flat $A$-algebra.
\end{proof}
In this section we first give
\begin{proof}[Proof of Theorem \ref{ex-2-thm}]:
By \ref{comp-2} we may assume that $A$ is complete. Let $Q$ be a complete regular local ring with a surjection $ \psi \colon Q \rt A$. We have $\ker \psi $ is generated by a $Q$-regular sequence $\fb = f_1, \ldots, f_c$.
Let $k = A/\m$. Set $\eR = \eR(I)$.

(i) $\implies$ (ii): \\
Consider $A \rt \eR \rt k$. The Jacobi-Zariski sequence yields for $j \geq 3$
$$D_{j+1}(k|\eR,  k) \rt D_j(\eR|A, k) \rt D_j(k|A,  k). $$
As $A$ is a complete intersection we have  $ D_j(k|A,  k) = 0$ for $j \geq 3$. Also as $\eR$ is a complete intersection we have $D_{j}(k|\eR,  k) = 0$ for $j \geq 3$. Therefore
$D_j(\eR|A, k) = 0$ for $j \geq 3$. From \cite[8.7]{I} it follows that $D_j(\eR|A, E) = 0$ for $j \geq 3$ for any $\eR$-module $E$. In particular this holds when $E = G_\F(M)$ the associated graded module of
an $I$-stable  filtration on an $A$-module $M$.   By \ref{g-l} it follows that $D_j(\eR|A, \eR(\F, M)) = 0$ for $j \geq 2$.

Recall we have a right exact complex  $C \colon 0 \rt H_1(\wh{J}) \xrightarrow{\theta} \eR^s \rt \wh{J}/\wh{J^2} \rt 0$ where $\ker \theta = D_2(\eR|A, \eR)$. The later module is zero. So $C$ is exact. It follows that $\Tor^{\eR}_1(\wh{J}/\wh{J}^2, \eR(\F, M)) = D_2(\eR|A, \eR(\F, M)) = 0$.
Consider the exact sequence $ 0 \rt \M \rt \eR \rt k \rt 0$. As $t^{-1}$ is $\M$-regular we get that $\M = \eR(\F, N)$ for some $A$-module $N$ and a $I$-stable filtration on $N$, see \cite[3.1]{PTJ-2}. It follows that $\Tor^{\eR}_2(\wh{J}/\wh{J}^2, k) = 0$.
So $\projdim_{\eR} \wh{J}/\wh{J}^2 \leq 1$.
As $C$ is exact it follows that $H_1(\wh{J})$ is free $\eR$-module.

(ii) $\implies$ (i): \\
As $H_1(J)$ is free $\eR$-module it is unmixed and equ-dimensional (as $\eR$ is). So by Theorem \ref{first} (II) we get \\ $D_2(\eR|A, \eR) = 0$. So the complex  $C \colon 0 \rt H_1(\wh{J}) \xrightarrow{\theta} \eR^s \rt \wh{J}/\wh{J^2} \rt 0$ is exact. This implies that $\projdim_{\eR} \wh{J}/\wh{J}^2 \leq 1$. So we have
\[
D_2(\eR|A, \M) = \Tor^{\eR}_1(\M,\wh{J}/\wh{J}^2) = \Tor^{\eR}_2(k,\wh{J}/\wh{J}^2)  = 0.
\]
We consider the exact sequence $0 \rt \M \rt \eR \rt k \rt 0$. As $D_3(\eR|A,\eR) = 0$ and $D_2(\eR|A, \M) = 0$ it follows from the long exact sequence of homology that $D_3(\eR|A, k) = 0$.
By the Jacobi-Zariski sequence for $Q \rt A \rt \eR$ it follows that $D_3(\eR|Q, k) = 0$. It follows that $D_3(\eR|Q, -) = 0$. Now $\eR$ has finite flat dimension over $Q$. By \cite[1.4]{Av}, it follows that $\eR$ is a complete intersection.
\end{proof}

\section{Proof of Theorem \ref{proj-ci}}
\s \label{proj-ci-2} We first prove that it suffices to assume that $A$ is a quotient of a regular local ring. Let $(A,\m)$ be an excellent complete intersection and let $I$ be an ideal in $A$ of positive height. Let $A^*$ be the completion of $A$. Let $T = A^*[IA^*t] = \R(I)\otimes_A A^*$ and let $\wh{T} = A^*[IA^*t, t^{-1}] = \eR(I)\otimes_A A^*$. The natural map $S\otimes_A A^* \rt T$ has kernel $J^* = J\otimes_A A^*$. We note that $H_1(J^*) = H_1(J)\otimes_{R(I)} T$. The natural map $\wh{S}\otimes_A A^* \rt \wh{T}$ has kernel $\wh{J}^* = \wh{J} \otimes_A A^*$. We note that $H_1(\wh{J}^*) = H_1(\wh{J})\otimes_{\eR(I)} \wh{T}$.

\begin{lemma}\label{proj-ci-lemm}(with hypotheses as in \ref{proj-ci-2} We have
\begin{enumerate}[\rm (I)]
  \item The following assertions are equivalent:
   \begin{enumerate}[\rm (a)]
     \item $\Proj \R(I)$ is a complete intersection.
     \item $\Proj T$ is a complete intersection.
   \end{enumerate}
  \item The following assertions are equivalent:
   \begin{enumerate}[\rm (a)]
     \item $\Proj \eR(I)$ is a complete intersection.
     \item $\Proj \wh{T}$ is a complete intersection.
   \end{enumerate}
  \item The following assertions are equivalent:
   \begin{enumerate}[\rm (a)]
   \item $D_3(\R(I)|A, \R(I))_n = 0$ for $n \gg 0$.
   \item $D_3(T|A^*, T)_n = 0$ for $n \gg 0$.
   \end{enumerate}
   \item The following assertions are equivalent:
   \begin{enumerate}[\rm (a)]
   \item $D_3(\eR(I)|A, \eR(I))_n = 0$ for $n \gg 0$.
   \item $D_3(\wh{T}|A^*, \wh{T})_n = 0$ for $n \gg 0$.
   \end{enumerate}
    \item The following assertions are equivalent:
   \begin{enumerate}[\rm (a)]
   \item $H_1(J)_P$ is a free $\R(I)_P$-module for  every $P \in \Proj(\R(I))$.
   \item $H_1(J^*)_P$ is a free $T_Q$-module for  every $Q \in \Proj(T)$.
   \end{enumerate}
   \item The following assertions are equivalent:
   \begin{enumerate}[\rm (a)]
   \item $H_1(\wh{J})_P$ is a free $\eR(I)_P$-module for  every $P \in \Proj(\eR(I))$.
   \item $H_1(\wh{J}^*)_P$ is a free $\wh{T}_Q$-module for  every $Q \in \Proj(\wh{T})$.
   \end{enumerate}
\end{enumerate}
\end{lemma}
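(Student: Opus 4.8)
The plan is to treat all six equivalences uniformly, as instances of faithfully flat ascent and descent along the completion map $A \rt A^*$, invoking excellence only to control the fibers. First I would record the two structural facts that drive everything. Since $A \rt A^*$ is faithfully flat, so are the base-changed maps $\R(I) \rt T$ and $\eR(I) \rt \wh{T}$, and they preserve the grading with $T_+ = \R(I)_+ T$ and $\wh{T}_+ = \eR(I)_+ \wh{T}$. Hence $\Spec T \rt \Spec \R(I)$ is surjective, and contraction $P = Q \cap \R(I)$ sets up a clean correspondence on the $\Proj$ loci: if $Q \in \Proj(T)$ then $P \nsupseteq \R(I)_+$ (otherwise $T_+ = \R(I)_+T \subseteq Q$), so $P \in \Proj(\R(I))$; conversely, if $P \in \Proj(\R(I))$ then picking $f \in \R(I)_+ \setminus P$ we have $f \notin Q$ for every $Q$ lying over $P$ (as $f \notin P = Q \cap \R(I)$) while $f \in T_+$, so the entire fiber over $P$ lies in $\Proj(T)$. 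Moreover each such $\R(I)_P \rt T_Q$ is flat local, hence faithfully flat local, and its fiber $T_Q \otimes_{\R(I)_P} \kappa(P)$ is a localization of $T \otimes_{\R(I)} \kappa(P) = A^* \otimes_A \kappa(P)$. Writing $\fp = P \cap A$, so that $\kappa(P)$ is a field extension of $\kappa(\fp)$, excellence of $A$ gives that $A^* \otimes_A \kappa(\fp)$ is geometrically regular over $\kappa(\fp)$; therefore $A^* \otimes_A \kappa(P) = (A^* \otimes_A \kappa(\fp)) \otimes_{\kappa(\fp)} \kappa(P)$ is regular, and all its localizations, in particular the fiber, are regular, a fortiori complete intersections.

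With the fibers understood, parts (III) and (IV) are the easiest. By flat base change for the cotangent complex \cite[6.3]{I} we have a graded isomorphism $D_j(T|A^*, T) \cong D_j(\R(I)|A, T)$, and $D_j(\R(I)|A, T) = D_j(\R(I)|A, \R(I)) \otimes_A A^*$; the grading here is the one coming from powers of $I$, unaffected by enlarging the degree-zero piece from $A$ to $A^*$. Since $A^*$ is faithfully flat over $A$, for each fixed $n$ we get $D_3(T|A^*, T)_n = 0$ if and only if $D_3(\R(I)|A, \R(I))_n = 0$, so the two ``vanishing for $n \gg 0$'' conditions coincide. Part (IV) is verbatim the same argument with $\wh{S}, \eR(I), \wh{T}$ replacing $S, \R(I), T$.

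For the freeness statements (V) and (VI), recall $H_1(J^*) = H_1(J) \otimes_{\R(I)} T$ is finitely generated, so $H_1(J^*)_Q = H_1(J)_P \otimes_{\R(I)_P} T_Q$ for $P = Q \cap \R(I)$. Ascent is immediate: base change of a free module is free. For descent, if $H_1(J^*)_Q$ is free over $T_Q$ then it is flat, so by faithfully flat descent $H_1(J)_P$ is flat over $\R(I)_P$, and a finitely generated flat module over a Noetherian local ring is free. Feeding this through the $\Proj$-correspondence of the first paragraph—every $P$ is the contraction of some $Q$, and every $Q$ contracts to such a $P$—gives (V); (VI) is identical for the extended Rees algebra.

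Finally (I) and (II): here $\Proj \R(I)$ being a complete intersection means $\R(I)_P$ is a complete intersection local ring for every $P \in \Proj(\R(I))$, and similarly for $T$. For each matched pair $\R(I)_P \rt T_Q$ I would invoke the flat-local criterion for complete intersections (e.g. \cite[4.3.8]{MR}): the map is flat local and, by the first paragraph, its fiber is regular and hence a complete intersection, so $T_Q$ is a complete intersection if and only if $\R(I)_P$ is; running this over all primes via the correspondence yields (I), and (II) follows the same way. The main obstacle is precisely the fiber computation of the first paragraph: one must use excellence to guarantee that $A^* \otimes_A \kappa(P)$ is regular for every relevant $P$—this is where geometric, not merely plain, regularity of the formal fibers is essential, since $\kappa(P)$ is in general a nontrivial extension of $\kappa(\fp)$—and one must check carefully that the $\Proj$ loci over $A$ and over $A^*$ match up under contraction so that the pointwise ascent/descent assembles into the stated global equivalences.
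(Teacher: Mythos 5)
Your proposal follows essentially the same route as the paper: excellence of $A$ makes $A \rt A^*$ regular, hence $\R(I) \rt T$ and $\eR(I) \rt \wh{T}$ are regular and faithfully flat; parts (I)--(II) then follow from Avramov's flat-local ascent/descent of the complete intersection property \cite[4.3.8]{MR}, parts (III)--(IV) from flat base change of Andr\'e--Quillen homology \cite[6.3]{I} together with faithful flatness of $A^*$ over $A$, and parts (V)--(VI) from ascent/descent of freeness along the faithfully flat local maps $\R(I)_P \rt T_Q$. The only methodological difference is in (V)--(VI): the paper reduces to projectivity over the localizations $\R(I)_{X_i}$ and descends it via an $\Ext^1$ computation, whereas you descend flatness prime by prime and invoke ``finitely generated flat over a Noetherian local ring is free''; both are valid. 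One point you gloss over in every descent direction: you need a prime $Q \in \Proj(T)$ --- which by the paper's convention must be \emph{homogeneous} --- lying over a given $P \in \Proj(\R(I))$, but surjectivity of $\Spec T \rt \Spec \R(I)$ only produces some prime over $P$, so your assertion that ``the entire fiber over $P$ lies in $\Proj(T)$'' is not literally correct (the fiber contains non-homogeneous primes, which are not points of $\Proj(T)$). The repair is the one the paper makes explicit: replace $Q$ by the prime $Q^*$ generated by the homogeneous elements of $Q$; it is homogeneous, still contracts to $P$ since $P$ is homogeneous, and still avoids $T_+$ because $Q^* \subseteq Q$ and $T_+$ is generated by $\R(I)_+$.
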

\begin{proof}
  Let $P$ be a prime ideal in $\R(I)$ and $Q$ a prime ideal in $T$ with $Q\cap \R(I) = P$. Then $P \supseteq \R(I)_+$ if and only if $Q \supseteq T_+$.
  The map $A \rt A^*$ is regular. As $\R(I)$ is a finitely generated $A$-algebra it follows that the map $\R(I) \rt \R(I)\otimes_A A^* = T$ is also regular (and faithfully flat).

  (I) First assume that $\Proj \R(I)$ is a complete intersection. If  $Q \in \Proj(T)$ then note that $P = \R(I)\cap Q$  is in $\Proj(\R(I))$. The map $\R(I)_P \rt T_Q$ is regular. In particular its fiber is a regular local ring. So by a result of Avramov it follows that $T_Q$ is also a complete intersection, see \cite[4.3.8]{MR}.

  Conversely assume that $\Proj T$ is a complete intersection. Let $P \in \Proj \R(I)$. As the map $\R(I) \rt T$ is faithfully flat it follows that there exists prime ideal $Q$ in $T$ with $Q\cap \R(I) =P$. If $Q$ is not homogeneous then if $Q^*$ is the prime ideal in $T$ generated by homogeneous elements in $Q$ we get that $Q^* \supseteq P$. Thus $Q^*\cap \R(I) = P$. Note necessarily $Q^* \in \Proj T$. Thus we can assume there exists $Q\in \Proj T$ such that $Q \cap \R(I) = P$. The map $\R(I)_P \rt T_Q$ is flat. So by a result of Avramov it follows that $\R(I)_P$ is also a complete intersection, see  \cite[4.3.8]{MR}. The result follows.

  (II) This follows from a similar argument as in (I).

  (III) As $A^*$ is a flat $A$-algebra we have
$D_3(T|A^*, T) =  D_3(\R(I)|A, T)$, see \cite[6.3]{I}. Also note that $D_3(R(I)|A, T) = D_3(\R(I)|A, \eR)\otimes_A A^*$. This induces an isomorphism
$D_3(T|A^*, T)_n = D_3(\R(I)|A, \eR)_n\otimes_A A^*$ for all $n \in \Z$.
 The result follows as $A^*$ is a faithfully flat $A$-algebra.

 (IV) This follows from a similar argument as in (III).

 (V) Let $I = (a_1, \ldots, a_m)$. Set $X_i = a_it \in \R(I)_1$ and $Y_i  = a_i t\in T_1$. The map $\R(I)_{X_i} \rt T_{Y_i}$ is faithfully flat for all $i$.
 It suffices to show that if $M$ is a graded $\R(I)$-module then $M$ is a projective $\R(I)_{X_i}$-module if and only if $(M\otimes_{\R(I)}T)_{Y_i}$ is a projective $T_{Y_i}$-module for all $i$.

 If $M_{X_i}$ is projective $\R(I)_{X_i}$-module then clearly $M_{X_i}\otimes_{\R(I)_{X_i}}T_{Y_i}$ is a projective $T_{Y_i}$-module. Observe that
 $$(M\otimes_{\R(I)}T)_{Y_i} = M_{X_i}\otimes_{\R(I)_{X_i}}T_{Y_i}.$$

 Conversely assume $M_{X_i}\otimes_{\R(I)_{X_i}}T_{Y_i}$ is a projective $T_{Y_i}$-module. Let $0 \rt N \rt F \rt M_{X_i} \rt 0$ be an exact sequence where $F$ is a free $\R(I)_{X_i}$-module. So $N$ is also graded. Then notice that
 $$\Ext^1_{\R(I)_{X_i}}(M_{X_i}, N)\otimes_{\R(I)_{X_i}} T_{Y_i} = \Ext^1_{T_{Y_i}}(M_{X_i}\otimes_{\R(I)_{X_i}}T_{Y_i}, N\otimes_{\R(I)_{X_i}}T_{Y_i})  = 0 $$
 As the map $\R(I)_{X_i} \rt T_{Y_i}$ is faithfully flat we get $\Ext^1_{\R(I)_{X_i}}(M_{X_i}, N) = 0$. So the map $F \rt M_{X_i}$ splits. Thus $M_{X_i}$ is a projective $\R(I)_{X_i}$-module.

 (VI) This follows from a similar argument as in (V).
\end{proof}

\begin{proof}[Proof of Theorem \ref{proj-ci}]
By \ref{proj-ci-lemm} we may assume $A$ is a quotient of a regular local ring $(Q, \n)$.
Let $A = Q/(\fb)$ where $\fb = f_1, \ldots, f_c \subseteq \n^2$ is a $Q$-regular sequence.

(i) $\Leftrightarrow$ (ii): This follows as $\Proj \R(I) \cong \Proj \eR(I)$.

(i) $\implies$ (iii):
Let $M$ be a finitely generated  $A$-module and let $\F$ be an $I$-stable filtration on $M$. Set $\R(\F, M)$ be the Rees module  of $M$ \wrt \ $\F$ and let $\eR(\F, M)$ be the extended Rees module associated to $M$ \wrt \ $\F$. Let $E$ be a finitely generated graded  $\R(I)$-module.

Claim-1: For $j \geq 2$ there exists $n(j)$ depending on $j$ and $\R(\F, M)$ such that  $D_j(\R(I)|A, \R(\F, M))_n  = 0$ for $n \geq n(j)$.

 Let $Q'$ be a polynomial algebra over $Q$ mapping onto $\R(I)$.
We note that
$D_j(\R(I)|Q, E) = D_j(\R(I)|Q', E)$ for $j \geq 2$. Let $I = (a_1, \ldots, a_r)$. Set $X_i = a_it$ and let $Y_i$ be an inverse image of $X_i$ in $Q'$. The surjective map
$Q_{Y_i} \rt \R(I)_{X_i}$ is locally a complete intersection as both  $\R(I)_{X_i}$ is a complete intersection and $Q'_{Y_i}$ is a regular ring.
So we have $D_j(\R(I)|Q', E)_{X_i} = 0$ for all $i$. It follows that $D_j(\R(I)|Q', E)$ is $\R(I)_+$-torsion. It follows that
for $j \geq 2$ there exists $n(j)$ depending on $j$ and $E$ such that
 $D_j(\R(I)|Q', E)_n = 0$ for all $n \geq n(j)$.

We consider the Jacobi-Zariski sequence for $Q \rt A \rt \R(I)$. We have \\ $D_j(A|Q,-) = 0$ for $j \geq 2$.
So for $j \geq 3$ we have $D_j(\R(I)|Q, E) \cong D_j(\R(I)|A, E)$. Thus $D_j(\R(I)|A, E)_n = 0$ for $n \gg 0$, for $j \geq 3$.

For $j = 2$ we set $E = \R(\F, M)$. We have an exact sequence
\begin{align*}
  0 = D_2(A|Q, \R(\F, M))
   &\rt D_2(\R(I)|Q, \R(\F, M)) \rt D_2(\R(I)|A, \R(\F, M))  \\
  \rt D_1(A|Q, \R(\F, M))&= (\fb)/(\fb)^2 \otimes_A \R(\F,M) = \R(\F, M)^c.
\end{align*}
We have $D_2(\R(I)|Q, \R(\F, M))_{X_i} = 0$. So
we have $D_2(\R(I)|A, \R(\F, M))_{X_i} $ is a $\R(I)_{X_i}$ submodule of $\R(\F, M)^c_{X_i} = \eR(\F,  M)^c_{X_i}$. Note $\R(I)_{X_i} = \eR(I)_{X_i}$. We also have
\begin{align*}
  D_2(\R(I)|A, \R(\F, M))_{X_i} &\cong
D_2(\R(I)_{X_i}|A, \R(\F, M)_{X_i})  \\
  &\cong  D_2(\eR(I)_{X_i}|A, \eR(\F, M)_{X_i}).
\end{align*}
If $\eR(\F,  M)^c_{X_i} = 0$ then  $D_2(\R(I)|A, \R(\F, M))_{X_i} = 0$. If $\eR(\F,  M)^c_{X_i} \neq 0$  then $t^{-1}$ is $\eR(\F,  M)^c_{X_i}-$ regular. However by \ref{l} we get that
$D_2(\eR(I)_{X_i}|A, \eR(\F, M)_{X_i})$ is $t^{-1}$-torsion. It follows that $D_2(\eR(I)_{X_i}|A, \eR(\F, M)_{X_i}) = 0$.

Thus $D_2(\R(I)|A, \R(\F, M)) $ is supported on $V(\R(I)_+)$. It follows that \\ $D_2(\R(I)|A, \R(\F, M))_n = 0 $ for $n \gg 0$.

Notice Claim-1 proves the assertion (a). We prove (b).

Consider the right exact complex $C \colon 0 \rt H_1(J) \xrightarrow{\theta} \R(I)^s \rt J/J^2 \rt 0 $. Note $\ker \theta = D_2(\R(I)|A, \R(I))$ which vanishes in high degrees. Let $W = \image \theta$. We note that if $\F$ is an $I$-stable filtration on $M$ then $(H_1(J)\otimes \R(\F, M))_n \cong (W\otimes\R(\F, M))_n$ for $n \gg 0$.
It follows that $$\Tor^{\R(I)}_1(J/J^2, \R(\F, M))_n \cong D_2(\R(I)|A, \R(\F, M))_n = 0 \ \text{ for $n \gg 0$.}$$ So  $\Tor^{\R(I)}_1(J/J^2, \R(\F, M))$ is $\R(I)_+$-torsion.

Let $P \in \Proj(\R(I)) = \Proj(\eR(I))$. So $\R(I)_P \cong \eR(I)_P$. Let  $\kappa(P)$ be the residue field of $\eR(I)_P$.
We have an exact sequence $0 \rt E \rt \eR(I) \rt \eR(I)/P \rt 0$. As $t^{-1}$ is $E$-regular, by \cite[3.1]{PTJ-2} we get that $E = \eR(\F, N)$ for some module $N$ and $I$-stable filtration $\F$ on $N$.
We note that $\eR(\F, N)_P = \R(\F, N)_P$ and  $\eR(I)_P = \R(I)_P$. As $\Tor^{\R(I)}_1(J/J^2, \R(\F, N))$ is $\R(I)_+$-torsion we get $\Tor^{\R(I)_P}_1((J/J^2)_P, \R(\F, N)_P) = 0$.
So we have $\Tor^{\R(I)_P}_2((J/J^2)_P, \kappa(P)) = 0$. Thus $\projdim_{\R(I)_P} (J/J^2)_P \leq 1$. We note that as $D_2(\R(I)|A, \R(I))_n = 0 $ for $n \gg 0$ we get $C_P$ is an exact complex. So $H_1(J)_P$ is free $\R(I)_P$-module.

(ii) $\implies$ (iv): This is similar to (i) $\implies$ (iii).

(iv) $\implies$ (ii). Set $\eR = \eR(I)$.  We first note that as $H_1(\wh{J})_P$ is free for all $P \in \Proj(\eR)$ we get $\Ass H_1(\wh{J}) \subseteq V(\eR(I)_+) \cup \Ass \eR$. It follows from Theorem \ref{second} that $D_2(\eR|A, \eR)_n = 0$ for $n \gg 0$. Let $P$ be a prime in $\Proj(\eR)$. Consider the exact sequence
\begin{equation*}
 0 \rt E \rt \eR \rt \eR/P \rt 0   \tag{$\dagger$}
\end{equation*}
Consider the right exact complex $C \colon 0 \rt H_1(\wh{J}) \xrightarrow{\theta} \eR^s \rt \wh{J}/\wh{J}^2 \rt 0 $.
We note that $\ker \theta = D_2(\eR|A, \eR)_n = 0$ for $n \gg 0$. So $\ker \theta_L = 0$ for every $L \in \Proj \R(I)$.
Thus $\projdim (\wh{J}/\wh{J}^2)_L \leq 1$ for every $L \in \Proj(\eR)$. So $\Tor^{\eR_L}_1(E_L, (\wh{J}/\wh{J}^2)_L) = 0$. It follows that
$\Tor^{\eR}_1(E, \wh{J}/\wh{J}^2)$ is $\eR_+$-torsion. So $\Tor^{\eR}_1(E, \wh{J}/\wh{J}^2)_n = 0$ for $n \gg 0$.
 Let $W = \image \theta$. We note that  $(H_1(\wh{J})\otimes E)_n \cong (W\otimes E)_n$ for $n \gg 0$.
It follows that $D_2(\eR|A, E)_n \cong \Tor^{\R(I)}_1(\wh{J}/\wh{J}^2, E)_n  = 0 $ for $n \gg 0$.  So  $D_2(\eR|A, E)$ is $\eR_+$-torsion.
By the exact sequence $(\dagger)$ we get  an exact sequence
$$D_3(\eR|A, \eR) \rt D_3(\eR|A, R/P) \rt D_2(\eR|A, E). $$
Let $\kappa(P)$ be the residue field of $\eR_P$. Then by the above exact sequence we get $D_3(\eR_P|A, \kappa(P)) = 0$. So by \cite[8.7]{I} it follows that $D_3(\eR_P|A, -) = 0$. As $A = Q/(\fb)$ we get
that $D_3(\eR_P|Q, -) = 0$. Now $\eR_P$ has finite flat dimension over $Q$. It follows from \cite[1.4]{Av} that $\eR_P$ is a complete intersection.

(iii) $\implies$ (i): This is similar to (iv) $\implies$ (ii).
\end{proof}

\s\label{ex-1} \emph{Resolution of singularities:}\\
Let $k$ be a field of characteristic zero and let $(A,\m)$ be a reduced local ring essentially of finite type over $k$.
Hironaka, see \cite[Main Theorem, p.\ 132]{Hir}, proved that there exists an ideal $I\subseteq A$ such that:
\begin{enumerate}
  \item $V(I) = \Sing(A)$.
  \item The natural projection morphism $\pi \colon \Proj \R(I) \rt \Spec(A)$ is a resolution of singularities of $\Spec(A)$, i.e.,
$\Proj \R(I)$ is non-singular and $\pi$ induces a $k$-scheme isomorphism
$$ \pi \colon \Proj(\R(I)) \setminus \pi^{-1}(V(I)) \rt \Spec(A)\setminus \Sing(A).$$
\end{enumerate}
\emph{Example:} Further assume that $A$ is a reduced complete intersection of dimension $d \geq 1$.  Let $I$ be the ideal  as  constructed above. As $A$ is reduced note $\height I > 0$. By construction $\Proj \R(I)$ is non-singular. In particular $\Proj \R(I)$ is a complete intersection.

\section{Proof of  Theorem \ref{rank}}
\s \label{set-rank} \emph{Setup:} In this section $(A,\m)$ is a \CM \ domain. Let $I \subseteq \m$ be a non-zero ideal.  Let $J$ and $\wh{J}$ denote the defining ideals of the Rees algebra (and extended Rees algebra respectively) of $I$.

We first prove
\begin{lemma}
\label{rank-j}(with hypotheses as in \ref{set-rank}) We have
$$ \rank_{\eR(I)} \wh{J}/\wh{J}^2 = \mu(I).$$
\end{lemma}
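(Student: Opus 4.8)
The plan is to compute the rank at the generic point of $\eR(I)$, exploiting the fact that inverting $t^{-1}$ turns the extended Rees algebra into the smooth $A$-algebra $A[t,t^{-1}]$, exactly as in Lemma \ref{l}. Since $A$ is a domain, $\eR(I) \subseteq A[t,t^{-1}]$ is a domain, so the rank of a finitely generated $\eR(I)$-module is well-defined and is unchanged under localization at any non-zero element; in particular localizing at $t^{-1}$ preserves the generic rank. Thus it suffices to compute $(\wh{J}/\wh{J}^2)_{t^{-1}}$ as an $\eR(I)_{t^{-1}}$-module.

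First I would recall from \ref{setup} that $\wh{S} = A[X_1, \ldots, X_r, T]$ with $r = \mu(I)$, and that $\wh{J} = J\wh{S} + (TX_i - a_i \mid 1 \le i \le r)$ by \cite[5.5.7]{HS}. The key step is to analyze $\wh{J}$ after inverting $T$ (whose image is $t^{-1}$). Over $\wh{S}_T = A[X_1,\ldots,X_r,T,T^{-1}]$ each relation $g_i := TX_i - a_i$ equals $T(X_i - a_iT^{-1})$, and since $T$ is a unit the ideal $(g_1,\ldots,g_r)$ coincides with $(X_1 - a_1T^{-1}, \ldots, X_r - a_rT^{-1})$. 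Performing the linear change of variables $Y_i = X_i - a_iT^{-1}$ exhibits $\wh{S}_T = A[Y_1,\ldots,Y_r,T,T^{-1}]$, in which $g_1,\ldots,g_r$ is, up to units, the sequence of variables $Y_1,\ldots,Y_r$. This is a regular sequence, with $\wh{S}_T/(g_1,\ldots,g_r) \cong A[T,T^{-1}] \cong \eR(I)_{t^{-1}} = A[t,t^{-1}]$, the isomorphism sending $T \mapsto t^{-1}$ and $X_i \mapsto a_i t$. As this is compatible with $\wh{\epsilon}$, the two maps have the same kernel, so $\wh{J}_{t^{-1}} = (g_1,\ldots,g_r)$. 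Since the conormal module of an ideal generated by a regular sequence of length $r$ is free of rank $r$, I conclude that $(\wh{J}/\wh{J}^2)_{t^{-1}}$ is free of rank $r = \mu(I)$ over $\eR(I)_{t^{-1}}$, and hence $\rank_{\eR(I)} \wh{J}/\wh{J}^2 = \mu(I)$.

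The main obstacle I anticipate is the bookkeeping needed to justify that $\wh{J}_{t^{-1}}$ is generated \emph{precisely} by $g_1,\ldots,g_r$, i.e.\ that $J\wh{S}_{t^{-1}}$ is absorbed into $(g_1,\ldots,g_r)$. This is handled by the identification of the quotient above: since $\wh{S}_{t^{-1}}/(g_1,\ldots,g_r)$ is already isomorphic to $\eR(I)_{t^{-1}}$ compatibly with $\wh{\epsilon}$, equality of the kernels is forced without separately manipulating generators of $J$.

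An alternative, more homological route stays closer to the André--Quillen machinery of the paper. Running the Jacobi--Zariski sequence for $A \rt \wh{S} \rt \eR(I)$ with coefficients in $K = \operatorname{Frac}(\eR(I)) = \operatorname{Frac}(A)(t)$, one uses that $\wh{S}$ is smooth over $A$ (so $D_1(\wh{S}|A,K) = 0$) and that $A \rt K$ is essentially smooth, being a composite of the localization $A \rt \operatorname{Frac}(A)$ with $\operatorname{Frac}(A) \rt \operatorname{Frac}(A)(t)$. By localization this gives $D_1(\eR(I)|A,K) = D_1(K|A,K) = 0$, hence a short exact sequence $0 \rt (\wh{J}/\wh{J}^2)\otimes_{\eR(I)} K \rt \Omega_{\wh{S}/A}\otimes K \rt \Omega_{\eR(I)/A}\otimes K \rt 0$ whose two outer terms have rank $r+1$ and $1$; counting dimensions over $K$ again yields $\rank \wh{J}/\wh{J}^2 = r = \mu(I)$.
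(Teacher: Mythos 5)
Your argument is correct, but it is genuinely different from the paper's. The paper proves the lemma by induction on $\mu(I)$: it adjoins one generator at a time via the surjection $\wt{\epsilon}\colon \eR(Q)[X] \rt \eR(I)$ with $Q = (a_1,\ldots,a_{r-1})$, shows the relative conormal module $\fp/\fp^2$ has rank one (Proposition \ref{rank-one-extn}), and splices these contributions together with the Jacobi--Zariski sequence for $\wh{S}[X] \rt \eR(Q)[X] \rt \eR(I)$ together with the base-change fact \ref{rank-extn}; the $t^{-1}$-localization trick appears there too, but only to handle the single new relation $X - a_lt$ and to kill the $D_2$ term. You instead do the whole computation in one step at the generic point: after inverting $T$ the substitution $Y_i = X_i - a_iT^{-1}$ exhibits $g_1,\ldots,g_r$ as a regular sequence of variables cutting out $A[t,t^{-1}]$, which both identifies $\wh{J}_{t^{-1}}$ with $(g_1,\ldots,g_r)$ (your compatibility-of-quotients argument for this is sound and correctly disposes of the generators coming from $J\wh{S}$) and shows the localized conormal module is free of rank $r$. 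Your route is shorter and avoids the induction entirely; it also only uses that $A$ is a domain. What the paper's route buys is the auxiliary Propositions \ref{rank-one-extn} and \ref{rank-extn}, and en passant the observation (used immediately afterwards in the proof of Theorem \ref{rank} as Claim-1) that $\wh{S}_{\wh{J}}$ is regular --- information at the prime $\wh{J}$ itself, not just at the generic point. Your alternative Jacobi--Zariski computation with coefficients in $K = \operatorname{Frac}(\eR(I))$ is also correct and is closest in spirit to the exact sequence \ref{ss} that the paper feeds this lemma into.
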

The proof of Lemma \ref{rank-j} requires a few preparatory  results.
\begin{proposition}\label{rank-one-extn}
(with hypotheses as in \ref{set-rank}). Assume $I = (a_1, \ldots, a_l)$ minimally and $l \geq 2$. Let   $Q = (a_1, \ldots, a_{l-1})$. We have an inclusion $\epsilon \colon \eR(Q) \rt \eR(I)$. Consider the map
$\wt{\epsilon} \colon \eR(Q)[X] \rt \eR(I)$ defined by mapping $X$ to $a_l$. Clearly $\wt{\epsilon}$ is surjective and let $\fp$ be its kernel. Then
$$\rank_{\eR(I)} \fp/\fp^2 = 1.$$
\end{proposition}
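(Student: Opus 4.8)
The plan is to analyze the surjection $\wt{\epsilon} \colon \eR(Q)[X] \rt \eR(I)$ by localizing at a suitable multiplicatively closed set so that the problem reduces to a computation over the fraction field of $A$, where everything becomes transparent. Since $A$ is a domain, $\eR(I)$ is a domain, and $\fp/\fp^2$ is a finitely generated module over the domain $\eR(I)$; therefore its rank can be read off after inverting the generic point. The key observation is that $\eR(I)$ has Krull dimension $d+1$ where $d = \dim A$, while $\eR(Q)[X]$ has Krull dimension $(d+1) + 1 = d+2$, so on general grounds we expect $\fp$ to have height $1$, which already suggests $\rank \fp/\fp^2 = 1$ once we know $\fp$ is generically a complete intersection.

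**Reduction to the fraction field.**
First I would pass to the total ring of fractions. Let $K = \operatorname{Frac}(A)$. Inverting $A \setminus \{0\}$, the extended Rees algebra $\eR(I)$ becomes $K[t, t^{-1}]$ because $I$ becomes the unit ideal over $K$ (as $I \neq 0$), and similarly $\eR(Q)$ becomes $K[t,t^{-1}]$. Under this localization the map $\wt{\epsilon}$ becomes a surjection $K[t,t^{-1}][X] \rt K[t,t^{-1}]$ sending $X$ to the image of $a_l$, which is a unit multiple of $t^{-1}$ or a specific element of $K[t,t^{-1}]$. The kernel of a surjection $R[X] \rt R$ from a polynomial ring in one variable onto a domain $R$, sending $X \mapsto \alpha$, is the principal prime ideal $(X - \alpha)$. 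Hence the localized $\fp$ is principal, generated by a single nonzerodivisor, so $(\fp/\fp^2)$ localized is free of rank $1$.

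**Assembling the rank.**
The decisive step is to conclude from the generic computation that $\rank_{\eR(I)} \fp/\fp^2 = 1$. Since $\eR(I)$ is a domain with fraction field $L = \operatorname{Frac}(\eR(I))$, and rank is computed by tensoring with $L$, I would argue that $(\fp/\fp^2) \otimes_{\eR(I)} L$ is obtained by further localizing the generic computation above; because localizing at $A\setminus\{0\}$ already produced a module that is locally free of rank $1$ (principal, generated by the nonzerodivisor $X-\alpha$), tensoring down to $L$ preserves this rank. Thus $\rank_{\eR(I)} \fp/\fp^2 = 1$.

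**Main obstacle.**
I expect the main technical point to be verifying that $\fp$ is generically principal \emph{generated by a nonzerodivisor}, i.e.\ that $X - \alpha$ is genuinely a nonzerodivisor modulo $\fp^2$ after localization, so that $\fp/\fp^2$ is free rather than merely cyclic of possibly smaller rank. This requires checking that $X - \alpha$ is a regular element in the localized polynomial ring $K[t,t^{-1}][X]$, which it is since $K[t,t^{-1}]$ is a domain and $X - \alpha$ is monic of degree one; a regular element generating a prime ideal in a domain always has $\fp/\fp^2$ free of rank $1$ on the complement of $V(\fp)$. A secondary subtlety is ensuring that the localization identifications ($\eR(I) \otimes_A K \cong K[t,t^{-1}]$ and $\eR(Q)\otimes_A K \cong K[t,t^{-1}]$) are correct, which uses crucially that $I, Q \neq 0$ so they become unit ideals over the field $K$; this is where the hypothesis that $A$ is a domain and $I \neq 0$ enters essentially.
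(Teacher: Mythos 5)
Your argument is correct and follows essentially the same route as the paper: localize away from $\fp$ so that $\eR(Q)\rt\eR(I)$ becomes an isomorphism, identify the localized kernel as the principal prime generated by the monic linear polynomial $X-a_lt$ via division, and read off the rank at the generic point of $\eR(I)$. The only cosmetic difference is that you invert the multiplicative set $A\setminus\{0\}$ while the paper inverts the single element $t^{-1}$; both sets avoid $\fp$, so the two computations are interchangeable.
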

\begin{proof}
We note that $\epsilon_{t^{-1}} \colon \eR(Q)_{t^{-1}} \rt \eR(I)_{t^{-1}} $ is an isomorphism. We also note that $t^{-1} \notin \fp$. So we have an exact sequence
\[
0 \rt \fp_{t^{-1}} \rt \eR(Q)[X]_{t^{-1}} = A[t,t^{-1}][X]  \xrightarrow{\wt{\epsilon}_{t^{-1}}} \eR(I)_{t^{-1}} = A[t,t^{-1}] \rt 0.
\]
We have that $f = X - a_lt \in \ker \wt{\epsilon}_{t^{-1}}$.

Claim:  $\fp_{t^{-1}} = (f)$. \\
Let $g(X) \in \fp_{t^{-1}}$. As $f$ is monic we have $g(X) = f q + u$ where $u \in A[t, t^{-1}]$. Then note that $\wt{\epsilon}_{t^{-1}}(u) = \epsilon_{t^{-1}}(u) = 0$. But $\epsilon_{t^{-1}}$ is an isomorphism. So $u = 0$. Thus Claim is proved.

We note that $\fp$ is a height one prime ideal in $\eR(Q)[X]$ and as $t^{-1} \notin \fp$ we have that $\fp_{\fp}$ is principal. So $\eR(Q)[X]_\fp$ is a DVR. The result follows.
\end{proof}
We will also need the following well-known result. We give a proof for the convenience of the reader.
\begin{proposition}
  \label{rank-extn} Let $B \subseteq T$ be Noetherian domains. Let $M$ be a finitely generated $B$-module. Then
  $$\rank_B M = \rank_T M \otimes_B T.$$
\end{proposition}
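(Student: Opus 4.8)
The plan is to unwind both sides using the standard description of rank over a Noetherian domain: for a finitely generated module $N$ over a Noetherian domain $C$ with quotient field $\operatorname{Frac}(C)$, one has $\rank_C N = \dim_{\operatorname{Frac}(C)}(N \otimes_C \operatorname{Frac}(C))$, since localizing at the generic point $(0)$ of $\Spec C$ produces a finite-dimensional vector space over $\operatorname{Frac}(C)$ whose dimension is exactly the generic rank. So both $\rank_B M$ and $\rank_T(M\otimes_B T)$ will be interpreted as dimensions of vector spaces over the respective fraction fields, and the whole statement reduces to comparing these two dimensions.

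First I would set $L = \operatorname{Frac}(B)$ and $K = \operatorname{Frac}(T)$. Since $B \subseteq T$ and $T$ is a domain, every nonzero element of $B$ is invertible in $K$, so the inclusion $B \hookrightarrow T \hookrightarrow K$ extends to a field embedding $L \hookrightarrow K$; this is the one small point that uses the hypothesis that $T$ (and hence $B$) is a domain rather than merely a ring. I would then compute the right-hand side: by the definition above,
\[
\rank_T(M\otimes_B T) = \dim_K\bigl((M\otimes_B T)\otimes_T K\bigr) = \dim_K(M \otimes_B K),
\]
the last equality by associativity of tensor product and $T \otimes_T K = K$.

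Finally, I would relate $\dim_K(M\otimes_B K)$ to $\rank_B M = \dim_L(M\otimes_B L)$ by factoring the base change through $L$: since $L \subseteq K$, associativity gives $M \otimes_B K \cong (M\otimes_B L)\otimes_L K$. Now $M\otimes_B L$ is a finite-dimensional $L$-vector space, say of dimension $r = \rank_B M$, so $M\otimes_B L \cong L^r$ and hence $(M\otimes_B L)\otimes_L K \cong K^r$, which has $K$-dimension $r$. Therefore $\rank_T(M\otimes_B T) = r = \rank_B M$, as desired. The only subtlety worth flagging is the existence of the embedding $L \hookrightarrow K$ and the invariance of vector-space dimension under the field extension $L \subseteq K$; everything else is formal manipulation of tensor products, so I expect no serious obstacle.
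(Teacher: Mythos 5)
Your proof is correct, but it takes a different route from the paper's. The paper works directly with the exact-sequence characterization of rank: it takes $0 \rt B^r \rt M \rt E \rt 0$ with $E$ torsion, say $bE = 0$ for some nonzero $b \in B$, tensors with $T$ to get $\Tor^B_1(E,T) \xrightarrow{\delta} T^r \rt M\otimes_B T \rt E\otimes_B T \rt 0$, and observes that $b$ annihilates both $E\otimes_B T$ and $\Tor^B_1(E,T)$ (hence $\image\delta$), so $M\otimes_B T$ has rank $r$ over $T$; the torsion case is handled separately by the same annihilator observation. You instead pass to the generic points: setting $L = \operatorname{Frac}(B)$ and $K = \operatorname{Frac}(T)$, you use the embedding $L \hookrightarrow K$ (which exists exactly because $B \subseteq T$ are domains) and the associativity chain $(M\otimes_B T)\otimes_T K \cong M\otimes_B K \cong (M\otimes_B L)\otimes_L K$ to reduce everything to invariance of vector-space dimension under field extension. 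Your argument is shorter and handles the torsion and positive-rank cases uniformly, at the cost of invoking the equivalence between the fraction-field definition of rank and the exact-sequence definition (which is standard for finitely generated modules over a Noetherian domain, since $L$ is flat over $B$ and kills finitely generated torsion modules). The paper's argument is self-contained with respect to the exact-sequence definition and uses only right-exactness of tensor and the long exact Tor sequence. Both are complete proofs; there is no gap in yours.
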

\begin{proof}
If $M$ is a torsion $B$-module then clearly $M \otimes_B T$ is a torsion $T$-module.
Now assume $\rank_B M = r > 0$. Then there exists an exact sequence $0 \rt B^r \rt M \rt E \rt 0$ where $E$ is a torsion $B$-module. Say $b E = 0$ and $b \in B$ is non-zero.
So we have an exact sequence of $T$-modules
\[
\Tor^B_1(E, T) \xrightarrow{\delta}  T^r \rt M\otimes_B T \rt E\otimes_B T \rt 0.
\]
We note that $b \in B \subseteq T$ annihilates $E\otimes_B T $ and $\Tor^B_1(E, T)$ (and hence $\image \delta$). It follows that $\rank_T M \otimes_B T = r$. The result follows.
\end{proof}
We now give
\begin{proof}[Proof of Lemma \ref{rank-j}]
We prove the result by induction on $\mu(I)$.

We first consider the case when $I = (a)$. As $A$ is a domain we get that $a$ is $A$-regular.
So $\wh{J} = (TX_1 - a)$; see \cite[5.5.9]{HS}. So $S_{\wh{J}}$ is a DVR. In this case the result holds trivially.

Now assume that $\mu(I) = r \geq 2$ and the result holds for all ideals $Q$ such that $\mu(Q) = r -1$. Let $I = (a_1, \ldots, a_r)$ and assume $Q = (a_1, \ldots, a_{r-1})$.
Let $\theta \colon \wh{S} \rt \eR(Q)$ be a minimal presentation. Let $U =\ker \theta$. By induction hypothesis $\rank_{\eR(Q)} U/U^2 = r -1$. Consider $\theta^\prime \colon \wh{S}[X] \rt \eR(Q)[X]$ induced by $\theta$ and mapping $X$ to $X$. We note that $\ker \theta^\prime = U\wt{S}[X]$. Let
$\epsilon \colon \eR(Q) \rt \eR(I)$ be the inclusion and let $\wt{\epsilon}\colon \eR(Q)[X] \rt \eR(I)$ be the map induced by $\epsilon$ and by mapping $X$ to $a_rt$.
We note that $\wt{\epsilon}$ is surjective. Furthermore $\eta = \wt{\epsilon} \circ \theta^\prime \colon \wh{S}[X] \rt \eR(I)$ is a minimal presentation. Let $\wh{J} = \ker \eta$ and let
$\fp = \ker \wt{\epsilon}$.

We apply the Jacobi-Zariski sequence to $\wh{S}[X] \xrightarrow{\theta^\prime} \eR(Q)[X] \xrightarrow{\wt{\epsilon}} \eR(I)$. We have an exact sequence
\begin{equation*}
   D_2(\eR(I)|\eR(Q)[X], \eR(I)) \rt D_1(\eR(Q)[X]|\wh{S}[X], \eR(I)) \rt \wh{J}/\wh{J}^2 \rt \fp/\fp^2 \rt 0. \tag{*}
\end{equation*}
We have $$D_2(\eR(I)|\eR(Q)[X], \eR(I))_{t^{-1}} = D_2(A[t,t^{-1}]|A[t, t^{-1}][X], A[t, t^{-1}]) = 0.$$
So $D_2(\eR(I)|\eR(Q)[X], \eR(I))$ is $t^{-1}$-torsion.
We also have
\begin{align*}
  D_1(\eR(Q)[X], \wh{S}[X])  &= \frac{U\wh{S}[X]}{U^2\wh{S}[X]} \otimes_{\eR(Q)[X]}\eR(I)   \\
   &=  \left(\frac{U}{U^2}\otimes_S S[X] \right)\otimes_{\eR(Q)[X]} \eR(I) \\
   &=  \left(\frac{U}{U^2}\otimes_{\eR(Q)}\eR(Q) \otimes_S S[X] \right) \otimes_{\eR(Q)[X]} \eR(I) \\
  &=  \left(\frac{U}{U^2} \otimes_{\eR(Q)} \eR(Q)[X]  \right) \otimes_{\eR(Q)[X]} \eR(I)\\
  &= \frac{U}{U^2} \otimes_{\eR(Q)} \eR(I).
\end{align*}
By \ref{rank-extn}  it follows that
$$\rank_{\eR(I)}  D_1(\eR(Q)[X], \wh{S}[X]) = \rank_{\eR(Q)}U/U^2 = r -1.$$
By (*) and \ref{rank-one-extn},  it follows that
$$\rank_{\eR(I)} \wh{J}/\wh{J^2} = r.$$
The result follows.
\end{proof}

We now give
\begin{proof}[Proof of Theorem \ref{rank}]
(1) Let $\wh{S} \rt \eR(I)$ be a minimal presentation. By \ref{ss} we have an exact sequence
\[
0 \rt D_2(\eR(I)|\wh{S}, \eR(I)) \rt H_1(\wh{J}) \rt \eR(I)^{\mu(\wh{J})} \rt \wh{J}/\wh{J}^2 \rt 0.
\]
We now $D_2(\eR(I)|\wh{S}, \eR(I))$ is $t^{-1}$-torsion. So by Lemma \ref{rank-j} the result follows.

Claim-1: $\wh{S}_{\wh{J}}$ is a regular local ring.

As $\wh{S}/\wh{J} = \eR(I)$ it follows that $\height \wh{J} = d  + \mu(I) + 1 - (d+1) = \mu(I)$. As $\rank_{\eR(I)} \wh{J}/\wh{J}^2 = \mu(I)$ it follows that $\wh{S}_{\wh{J}}$ is a regular local ring.

(2) We first assert that \\
Claim-2 $S_J$ is a regular ring. \\
We note that $S \subseteq \wh{S} = S[T]$ is a flat extension. We have by \cite[5.5.7]{HS} that $J \subseteq \wh{J} \cap S$. Set $Q = \wh{J} \cap S$. The extension $S_Q \rt \wh{S}_{\wh{J}}$ is flat.
As $\wh{S}_{\wh{J}}$ is a regular local ring it follows that $S_Q$ is a regular local ring. Thus $S_J$ is a regular local ring.

So we have that $\rank_{\R(I)}J/J^2 = \height J$.
As $S/J = \R(I)$ it follows that $\height J = \mu(I) - 1$. By \ref{ss} we have an exact sequence
\[
0 \rt D_2(\R(I)|A, \R(I)) \rt H_1({J}) \rt \R(I)^{\mu(J)} \rt J/J^2 \rt 0.
\]
 Set $X_1 = a_1t$ and localize the above sequence \wrt \ $X_1$. We note that as $\R(I)_{X_1} = \eR(I)_{X_1}$
 $$D_2(\R(I)|A, \R(I))_{X_1} = D_2(\eR(I)|A, \eR(I))_{X_1}.$$
 The later module is $t^{-1}$-torsion. It follows that
 $$ \rank_{R(I)} H_1(J) = \rank_{R(I)_{X_1}} H_1(J)_{X_1} = \mu(J) - \rank J/J^2 = \mu(J) - \mu(I) + 1.$$
\end{proof}

\section{Proof of Proposition \ref{locus}}
In this section we give:
\begin{proof}[Proof of Proposition \ref{locus}]
(I) We note that $S = A[X_1, \ldots, X_n]$ is regular $*$-local ring and $J$ is a graded ideal of $S$. We have $\projdim S/J$ is finite.

If $H_1(J)_P$ is free $\R(I)_P$-module then by a result Gulliksen, see \cite[1.4.9]{GL}, it follows that $J_P$ is generated by a regular sequence. It follows that $\R(I)_P$ is a complete.
intersection.

Conversely if $\R(I)_P$ is a complete intersection then $J_P$ is generated by a regular sequence. It follows that $H_1(J)_P$ is free $\R(I)_P$-module.

(II) This is similar to (I).
\end{proof}

\begin{remark}
  We note that even if we choose minimal generators of $J$ they might not remain minimal in $J_P$
\end{remark}

\section{Proof of Theorem \ref{min-gen}}
In this section we give
\begin{proof}[Proof of Theorem \ref{min-gen}]
Let $Q$ be a minimal reduction of $I$. So $Q = (x_1, \ldots, x_r)$ as $I$ is equi-multiple. We note that $x_1, \ldots, x_r$ is an $A$-regular sequence. Let $\wh{S}_L = A[X_1, \ldots, X_r, T]$
be a minimal presentation of $\eR(Q)$ and let $\eta \colon \wh{S}_Q \rt \eR(Q)$ be the corresponding map. Then by \cite[5.5.9]{HS}  we get that $\ker \eta$ is generated by a regular sequence.
So $D_2(\eR(Q)|A, -) = 0$. Applying the Jacobi-Zariski sequence to $A \rt \eR(Q) \rt \eR(I)$ we obtain an inclusion $D_2(\eR(I)|A, k) \hookrightarrow D_2(\eR(I)|\eR(Q), k)$. It follows that
$\mu(H_1(\wh{J})) \leq \rank_k D_2(\eR(I)|\eR(Q), k)$.

 As $\eR(I)$ is a complete intersection we obtain $D_3(k|\eR(I), k) = 0$. We apply the Jacobi-Zariski sequence to $ \eR(Q) \rt \eR(I) \rt k$. So we get an inclusion $D_2(\eR(I)|\eR(Q), k) \hookrightarrow D_2(k|\eR(Q), k)$.
 We note that as $\eR(Q)$ is a complete intersection $\rank_k D_2(k|\eR(Q), k) = \embdim(\eR(Q))  - \dim \eR(Q)$.
 We note that $\embdim \eR(Q) = 1 + \embdim G_Q(A)$. We have $G_Q(A) \cong A/Q[x_1^*, \ldots, x_r^*]$. So $\embdim G_Q(A) = \embdim{A/Q} + r$.
 By the previous inequality it follows that $\mu(H_1(\wh{J}))  \leq \embdim A/Q + r - d$.
\end{proof}
We given an example which proves that the bound in Theorem \ref{min-gen} is strict.
\begin{example}\label{hyper}
Let $(R,\n)$ be a regular local ring with infinite residue field and let $(A, \m) = (R/(f), \n/(f))$ for some $f \in \n^2$. Take $I = \m$. We first note that $\projdim_{\wh{S}} \eR(\m) = \infty$
(otherwise $\projdim_A \m < \infty$, which is false). So $\wh{J}$ is not generated by a regular sequence. So $H_1(\wh{J}) \neq 0$. So $\mu(H_1(\wh{J})) \geq 1$.
Let $Q$ be a minimal reduction of $\m$. Then notice that $A/Q = \ov{R}/(\ov{f})$ where $\ov{R}$ is a DVR. So $\embdim A/Q = 1$. Thus the bound is attained.
\end{example}
\section{Proof of Theorem \ref{poly-m-thm} and corollary \ref{poly-m-corr-reg}}
\s  \label{setup-poly} Let  $(A,\m)$ be a Noetherian local ring and let $I$ be an $\m$-primary ideal.  Set $\R(I) = \bigoplus_{n \geq 0}I^n$ be the Rees algebra of $I$.  Assume $\dim A = d \geq 1$.
We first prove
\begin{lemma}\label{poly-growth}(with hypotheses as in \ref{setup-poly}) Let $E$ be a finitely generated $\R(\F)$-module. Then
\begin{enumerate}[\rm (1)]
  \item Fix $j \geq 2$. We have $\ell(D_j(\R(I)|A, E)_n$ is finite for all $n \in \Z$.
  \item Fix $j \geq 2$. The function $n \rt \ell(D_j(\R(I)|A, E)_n$ is polynomial of degree $\leq d -1$.
\end{enumerate}
\end{lemma}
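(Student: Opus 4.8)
The plan is to use that $I$ is $\m$-primary to pin down the support of $M := D_j(\R(I)|A, E)$ as an $A$-module, and then deduce both the finite length of each graded piece and the polynomial growth purely from the structure of $M$ as a finitely generated graded $\R(I)$-module. First I would recall that since $A \rt \R(I)$ is essentially of finite type, the cotangent complex is homotopic to a complex of finitely generated free graded $\R(I)$-modules (see \cite[6.11]{I}); hence $M$ is a finitely generated graded $\R(I)$-module for every $j$. Because $\R(I) = A[It]$ is standard graded over $\R(I)_0 = A$, each component $M_n$ is a finitely generated $A$-module. Thus for part (1) it suffices to prove $\Supp_A M \subseteq \{\m\}$.

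To establish this I would fix a prime $\fp \neq \m$ and argue locally. Since $I$ is $\m$-primary, $V(I) = \{\m\}$, so $I \nsubseteq \fp$ and therefore $I A_\fp = A_\fp$. As localization commutes with the formation of the Rees algebra, $\R(I)\otimes_A A_\fp = \R(IA_\fp) = A_\fp[t]$, a polynomial ring, hence a smooth $A_\fp$-algebra. Applying flat base change for Andr\'e-Quillen homology along the flat map $A \rt A_\fp$ (as in \cite[6.3]{I}) gives
$$ M \otimes_A A_\fp \cong D_j(A_\fp[t]\,|\,A_\fp,\, E\otimes_A A_\fp) = 0 \quad \text{for } j \geq 1, $$
the vanishing because $A_\fp[t]$ is smooth over $A_\fp$. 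So $\Supp_A M \subseteq \{\m\}$, and since each $M_n$ is finitely generated over $A$, it has finite length. This proves (1).

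For (2) I would read off the dimension of $M$ over $\R(I)$ from the support computation just made: if $P \in \Spec \R(I)$ satisfies $M_P \neq 0$ then $P \cap A \in \Supp_A M = \{\m\}$, so $P \supseteq \m\R(I)$. Hence $\Supp_{\R(I)} M \subseteq V(\m\R(I))$ and $\dim_{\R(I)} M \leq \dim \R(I)/\m\R(I)$. The ring $\R(I)/\m\R(I)$ is the fiber cone of $I$, whose dimension is the analytic spread $\ell(I)$, and for an $\m$-primary ideal $\ell(I) = \dim A = d$. Thus $\dim_{\R(I)} M \leq d$. Since $M$ is a finitely generated graded module over the standard graded ring $\R(I)$ all of whose graded pieces have finite length, the function $n \rt \ell(M_n)$ is of polynomial type of degree $\dim_{\R(I)} M - 1 \leq d - 1$, by the usual Hilbert-polynomial estimate (reduce to a module over $\R(I)/(\m\R(I))^k$, which is standard graded over the Artinian ring $A/\m^k$).

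\textbf{Main obstacle.} The two inputs I would want to justify carefully are the flat base-change isomorphism $M\otimes_A A_\fp \cong D_j(\R(I)_\fp\,|\,A_\fp, E_\fp)$ together with the identification $\R(I)_\fp = A_\fp[t]$, and the equality $\dim \R(I)/\m\R(I) = d$ coming from the analytic spread of an $\m$-primary ideal. Neither is deep, but the degree bound in (2) rests entirely on the latter, so that is the step I expect to require the most care.
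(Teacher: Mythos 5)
Your proposal is correct and follows essentially the same route as the paper: localize at $\fp \neq \m$ to get $\R(I)_\fp = A_\fp[t]$ smooth over $A_\fp$, hence vanishing of $D_j$ away from $\m$, which gives finite length of each graded piece; then observe that $M$ is killed by a power of $\m$ and is therefore a finitely generated graded module over $\R(I)/\m^s\R(I)$, a standard graded ring of dimension $d$ over an Artinian local ring, so the Hilbert function is of polynomial type of degree at most $d-1$. The only cosmetic difference is that you justify $\dim \R(I)/\m\R(I) = d$ via the analytic spread of an $\m$-primary ideal, where the paper simply asserts the dimension; both are fine.
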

\begin{proof}
  (1) Let $\fp \neq \m$ be a prime ideal in $A$. We note that $\R(I)_\fp = A_\fp[t]$ which is a smooth $A_\fp$-algebra. So $D_j(\R(I)|A, E)_\fp = D_j(A_\fp[t]|A_\fp, E_\fp) = 0$. We note that
  $(D_j(\R(I)|A, E)_n)_\fp = (D_j(A_\fp[t]|A_\fp, E_\fp)_n = 0$. We also note that as $D_j(\R(I)|A, E)$ is a finitely generated graded $\R(I)$-module, each $D_j(\R(I)|A, E)_n$ is a finitely generated $A$-module. The result follows.

  (2) As $D_j(\R(I)|A, E)$ is a finitely generated graded $\R(I)$-module, and as by (1) we have $\ell(D_j(\R(I)|A, E)_n$ is finite for all $n \in \Z$ it follows that there exists $s$ such that $\m^s D_j(\R(I)|A, E) = 0$. It follows that $D_j(\R(I)|A, E)$ is a $\R(I)/\m^s\R(I)$-module and the latter has dimension $d$. The result follows.
\end{proof}
We now give
\begin{proof}[Proof of Theorem \ref{poly-m-thm}]

(i) $\implies$ (ii):

We note that $\dim  D_2(\R(I)|A, E) \leq d -i$.  Let $\fp \in \Proj(\R(I)$ with height $\leq i$. As $\R(I)$ is equi-dimensional and catenary it follows that $\dim \eR(I)/\fp = \dim \eR(I) - \height  \fp \geq  d + 1 - i$, see \cite[Section 31,Lemma 2]{Mat} for the local case; the same result holds for $*-local$ case. It follows that $\fp$ is not in the support of $D_2(\R(I)|A, E) $ for any graded $\R(I)$-module $E$. We now choose $E = \R(I)/\fp$ for a prime ideal of height $\leq i$. We have $D_2(\R(I)|A, E) = D_2(\R(I)|S, E)$. Let $Q$ be the inverse image of $\fp $ in $S$. So we have
$$ 0 = D_2(\R(I)|S, \R(I)/\fp)_\fp = D_2(\R(I)_\fp|S_Q, \kappa(\fp)). $$
It follows that $H_1(J_Q) = 0$. The result follows.

(ii) $\implies$ (i): Suppose if possible there exists a finitely generated graded $\R(I)$-module with $\dim D_2(\R(I)|A, E) \geq d -i + 1$. Then there exists a prime ideal $\fp$ of $\R(I)$ in the support of
$D_2(\R(I)|A, E)$ with $\height \fp \leq i$. As $D_2(\R(I)|A, E)$ is a $\R(I)/\m^s$-module  for some $s \geq 1$ it follows  that $\fp \in \Proj(\R(I))$.
But  as $S_Q \rt \R(I)_\fp$ is a complete intersection we have
$$ D_2(\R(I)|S, E)_\fp = D_2(\R(I)_\fp|S_Q, E_\fp) = 0,$$
see \cite[2.5.2]{MR}. This is a contradiction. The result follows.

If (i) (and so (ii))  holds then by \cite[8.4]{I} we have for any prime ideal $\fp$ of $\R(I)$ with $\height \fp \leq i$ we have
$$D_j(\R(I)|S, E)_\fp = D_j(\R(I)_\fp|S_Q, E_\fp) = 0,$$
for any $j \geq 2$. The result follows.
\end{proof}
We now give
\begin{proof}[Proof of Corollary \ref{poly-m-corr-reg}]

(i) $\implies$ (ii):

By the theorem above we obtain that for any prime ideal $\fp$  in $\Proj \R(I)$ of height $\leq i$ the ideal $J_Q$ is a complete intersection. So $H_1(J)_\fp$ is free $\R(I)_\fp$-module.

(ii) $\implies$ (i):

We note that the $S$-ideal $J$ has finite projective dimension. So if $H_1(J)_\fp$ is free then by a result due to Gulliksen \cite[1.4.9]{GL} we obtain that $J_Q$ is a complete intersection.

(i) $\Leftrightarrow$ (iii) This follows from the fact (i) $\Leftrightarrow$ (ii) of the above theorem with the fact that $S$ is regular.
\end{proof}
\section{appendix}
In the appendix we prove  some results which we believe are already known. However we are unable to find a reference for these results. As it is critical for us we give  proofs.

\s\label{cont} We will need the following exercise problem from \cite[Exercise 6.7]{Mat}. Let $A \rt B$ be a homomorphism of Noetherian rings and let $M$ be a finitely generated $B$-module. Then 
$$\Ass_A M  = \{ P \cap A \mid P \in \Ass_B M \}.$$
\begin{theorem}\label{unmixed}
Let $(A,\m)$ be a \CM \ local ring of dimension $d \geq 1$. Let $I \subseteq \m$. Then
\begin{enumerate}[\rm (1)]
  \item $\eR(I)$ is unmixed and equi-dimensional.
  \item If $\height I > 0$ then $\R(I)$ is also unmixed and equi-dimensional.
\end{enumerate}
\end{theorem}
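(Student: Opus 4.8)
The plan is to prove both parts by localizing at a well-chosen nonzerodivisor, where the ring becomes \CM \ (hence has no embedded primes and is equi-dimensional), and then transporting the information on associated and minimal primes back along the localization map. Throughout I will use that a \CM \ local ring is unmixed and equi-dimensional, so that $\Ass A$ is exactly the set of minimal primes of $A$ and $\dim A/\fp = d$ for every minimal prime $\fp$ of $A$.

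For (1), note that $t^{-1}$ is a nonzerodivisor on $\eR(I)$ (since $\eR(I)\sub A[t,t^{-1}]$) and that $\eR(I)_{t^{-1}} = A[t,t^{-1}]$. As $A$ is \CM, so are $A[t]$ and its localization $A[t,t^{-1}]$; hence $\Ass A[t,t^{-1}]$ coincides with the set of its minimal primes, namely $\{\fp A[t,t^{-1}] : \fp \text{ a minimal prime of } A\}$. Because no associated (in particular no minimal) prime of $\eR(I)$ can contain the nonzerodivisor $t^{-1}$, contraction along $\eR(I)\rt A[t,t^{-1}]$ gives bijections of both $\Ass$ and the set of minimal primes with the corresponding sets for $A[t,t^{-1}]$; since these already agree for $A[t,t^{-1}]$, we conclude that $\Ass \eR(I)$ equals the set of minimal primes of $\eR(I)$, i.e. $\eR(I)$ is unmixed. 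For equi-dimensionality I would write each minimal prime as $Q = \fp A[t,t^{-1}] \cap \eR(I)$ and use the reduction map $A[t,t^{-1}] \rt (A/\fp)[t,t^{-1}]$ to identify $\eR(I)/Q \cong \eR_{A/\fp}(\ov I)$, the extended Rees algebra of the image $\ov I$ of $I$ in the $d$-dimensional local domain $A/\fp$. The extended Rees algebra of any ideal of a $d$-dimensional local ring has dimension $d+1$ (it carries $t^{-1}$ as a nonzerodivisor with quotient the associated graded ring $G_{\ov I}(A/\fp)$, which has dimension $d$), so each $\dim \eR(I)/Q = d+1$; hence all minimal quotients have the same dimension and $\dim \eR(I) = d+1$.

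For (2) the argument is parallel, but no global nonzerodivisor like $t^{-1}$ is available, and this is exactly where $\height I > 0$ enters. Since $A$ is \CM \ we have $\Ass A = \Min A$, so by prime avoidance $I$ contains a nonzerodivisor $a$, whence $at \in \R(I)$ is a nonzerodivisor on $\R(I) \sub A[t]$ with $\R(I)_{at} = \eR(I)_{at}$. Exactly as above, contraction from $\eR(I)_{at}$ shows that $\Ass \R(I)$ and the minimal primes of $\R(I)$ both biject with the localized primes, and since $\eR(I)$ is already unmixed by (1) this forces $\R(I)$ to be unmixed. For equi-dimensionality, $a$ avoids every minimal prime $\fp$ of $A$, so every minimal prime $Q = \fp A[t,t^{-1}]\cap \eR(I)$ restricts to a minimal prime $P = \fp A[t]\cap \R(I)$, and the reduction map $A[t]\rt (A/\fp)[t]$ identifies $\R(I)/P \cong \R_{A/\fp}(\ov I)$. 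Here $\height I > 0$ guarantees $I \not\subseteq \fp$, so $\ov I$ is a \emph{nonzero} ideal of the domain $A/\fp$ and $\dim \R_{A/\fp}(\ov I) = \dim(A/\fp)+1 = d+1$; thus all minimal quotients have dimension $d+1$. (If $I \subseteq \fp$ then $\ov I = 0$ and $\R_{A/\fp}(0) = A/\fp$ has dimension only $d$, which is why positive height is required for $\R(I)$ but not for $\eR(I)$.)

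The routine part is the localization/contraction bookkeeping for associated and minimal primes; the main obstacle is the identification of the quotients $\eR(I)/Q$ and $\R(I)/P$ as (extended) Rees algebras over the domains $A/\fp$, together with their dimension computation, since it is this that actually delivers equi-dimensionality rather than merely unmixedness. The key inputs there are the standard facts that $\dim G_{\ov I}(A/\fp) = \dim A/\fp$ and that the (extended) Rees algebra of a nonzero ideal in a $d$-dimensional local domain has dimension $d+1$.
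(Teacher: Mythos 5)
Your proof is correct, and while it shares the paper's central device --- exploiting the nonzerodivisor $t^{-1}$ (resp.\ an $A$-regular element of $I$) and the identification of the resulting localization with $A[t,t^{-1}]$ --- it diverges in how equi-dimensionality is obtained. The paper first computes $\Ass \eR(I)$ explicitly as $\{\fp A[t,t^{-1}]\cap \eR(I) \mid \fp \in \Ass A\}$ (via $\Ass_A \eR(I) = \bigcup_n \Ass_A I^n \subseteq \Ass A$ together with \cite[4.5.5]{BH}), and then exhibits above each such prime an explicit chain of primes of length $d+1$, obtained by contracting $P_iA[t,t^{-1}]$ along a maximal chain $\fp = P_0 \subsetneq \cdots \subsetneq P_d = \m$ in $A$ and appending the maximal homogeneous ideal; for $\R(I)$ it re-runs this analysis from scratch using $\R(I)_{x} \cong A_x[t]$ for an $A$-regular $x \in I$. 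You instead identify each minimal quotient $\eR(I)/Q$ with $\eR_{A/\fp}(\ov{I})$ (and $\R(I)/P$ with $\R_{A/\fp}(\ov{I})$) and invoke the standard dimension formula $\dim = d+1$ for (extended) Rees algebras over the $d$-dimensional domains $A/\fp$ (see \cite[5.1.4]{HS}); this is shorter and makes transparent exactly why $\height I>0$ is needed only for $\R(I)$, namely to force $\ov{I}\neq 0$ in each $A/\fp$, without which $\R_{A/\fp}(\ov{I})$ would have dimension only $d$. Your deduction of unmixedness of $\R(I)$ from that of $\eR(I)$ through the common localization $\R(I)_{at}=\eR(I)_{at}$ is likewise a genuine shortcut over the paper's direct treatment. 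The trade-off is that the paper's argument is self-contained, proving the needed dimension counts by hand, whereas yours imports the Rees-algebra dimension theorem (whose proof is essentially the chain construction the paper carries out); both are complete.
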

\begin{proof}
(1) Claim-1:  We have $$\Ass \eR(I) = \{ \fp A[t,t^{-1}] \cap \R(I) \mid \fp \in \Ass A \}.$$

Proof of Claim-1: Set $H = \{ \fp A[t,t^{-1}] \cap \R(I) \mid \fp \in \Ass A \}$. As $A$ is \CM \ all associate primes of $A$ are minimal. So by \cite[4.5.5]{BH} $H$ is the set of minimal primes of $\eR(I)$.
Conversely if $Q \in \Ass \eR(I)$ then note that as $t^{-1}$ is $\eR(I)$-regular we have that $t^{-1} \notin Q$. So $Q = \fp A[t, t^{-1}]\cap \eR(I)$ for some prime $\fp$ of $A$. We note that
$\fp = Q\cap A \in \Ass_A \eR(I) = \bigcup_{n \in \Z}\Ass_A  I^n \subseteq \Ass_A A$ (here we are considering $I^n$ as an $A$-module). So Claim-1 follows.

Let  $Q = \fp A[t, t^{-1}]\cap \eR(I)$ be an associate prime of $\eR(I)$. Then $\fp $ is a minimal prime of $A$.
As $A$ is \CM \ we have $\dim A/\fp = d$. So we have a chain of prime ideals in $A$
$$ \fp = P_0 \subseteq P_1 \subsetneq P_2 \subsetneq \cdots \subsetneq P_{d-1} \subsetneq P_d = \m.$$
So we have a chain of prime ideals in $\eR(I)$,
$$ Q = P_0 A[t,t^{-1}] \cap \eR(I) \subsetneq \cdots \subsetneq P_dA[t,t^{-1}]\cap \eR(I) =  \m A[t,t^{-1}]\cap \eR(I).$$
Finally note that $\m A[t,t^{-1}]\cap \eR(I)$ is properly contained in the maximal homogeneous ideal of $\eR(I)$.
So $\dim \eR(I)/Q = d +1$. The result follows.

(2) Let  $Q \in \Ass \R(I)$. Then $Q \cap A = \fp \in \Ass_\A \R(I) = \bigcup_{n \geq 0}\Ass_A  I^n \subseteq \Ass_A A$ (here we are considering $I^n$ as an $A$-module). 
 As $A$ is \CM \ we get that $\fp$ is a minimal prime of $A$ and $\dim A/\fp = d$. As $\height I \geq 1$ thee exists $x \in I$ which is $A$-regular and so not contained in any minimal prime of $A$. We note that $\R(I)_{xt^0} \cong  A_x[t]$. It follows that $Q_{xt^0} = \fp A_x[t]$.
We note that $\dim (A/\fp)_x = d - 1$. So there is a chain of primes in $A_{x}$
$$ \fp = P_0 \subseteq P_1 \subsetneq P_2 \subsetneq \cdots \subsetneq P_{d-1}.$$
This yields a chain of prime ideals in $\R(I)$
\[
 Q = P_0 A_x[t] \cap \R(I) \subsetneq \cdots \subsetneq P_{d-1}A_x[t]\cap \R(I) \subsetneq (P_{d-1} , t)A_x[t]\cap \R(I).
\]
We note that
$$ (P_{d-1}, t)A_x[t]\cap \R(I) \subsetneq  (\m, \R(I)_+).$$
So $\dim \R(I)/Q = d +1$.
It follows that $\R(I)$ is unmixed and equi-dimensional.
\end{proof}

\s \label{sup-elt} Let $(A,\m)$ be a Noetherian local ring and let $I \subseteq \m$ be an ideal in $A$. An element $x \in I$  is said to be $I$-superficial if there exists non-negative integers $c, n_0$ such that $(I^{n+1} \colon x)\cap I^c = I^n$ for all $n \geq n_0$. Superficial elements exist when the residue field $k$ of $A$ is infinite. If $\grade I > 0$ then  it is not difficult to prove that any $I$-superficial element is $A$-regular.

\s \label{const-sup} \emph{Sketch of existence of a superficial element:} Suppose $A/\m$ is infinite. Let $G = G_I(A)$ the associated graded ring of $A$ \wrt \ $I$. Let $P_1, \ldots, P_r, Q_1, \ldots, Q_s$ be the associate primes of $G$ such that $P_i \nsupseteq G_+$ for all $i$ and $Q_j \supseteq G_+$ for all $j$.  Set $P_{i,1} = I/I^2 \cap P_i$. We note that $P_{i.1}$ is properly contained in $I/I^2$. So by Nakayama's lemma  $V_i = (P_{i,1} + \m I)/\m I$ is a proper subspace of the $k$-vector space $U = I/\m I$. As $k$ is infinite it follows that $U \setminus \cup_{i = 1}^{r} V_i$ is non-empty. Then it is not difficult to show that any $x \in I$ such that
$\ov{x} \in U \setminus \cup_{i = 1}^{r} V_i$ is an $I$-superficial element.

We need the following result:
\begin{proposition}\label{generic}
Let $(A,\m)$ be a \CM \ local ring with infinite residue field. Let $I \subseteq \m$ be an ideal of positive height. Then there exists $a_1, \ldots, a_s \in I$ such that
\begin{enumerate}[\rm (1)]
  \item $I = (a_1, \ldots, a_s)$ (minimally).
  \item Each $a_i$ is $A$-regular.
\end{enumerate}
\end{proposition}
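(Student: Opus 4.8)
The plan is to reduce the statement to a prime-avoidance argument in the $k$-vector space $I/\m I$, where $k = A/\m$. First I would record the two standard facts that drive everything. Since $A$ is \CM \ it has no embedded primes, so $\Ass A$ consists precisely of the minimal primes of $A$ and is in particular finite; and an element $x \in A$ is $A$-regular \ff $x \notin \bigcup_{\fp \in \Ass A}\fp$. Moreover $\height I > 0$ forces $I \nsubseteq \fp$ for every $\fp \in \Ass A$, because any associated (hence minimal) prime containing $I$ would give $\height I = 0$.

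Next I would translate regularity into a condition on $U := I/\m I$. For each $\fp \in \Ass A$ put
\[
W_\fp := \frac{(I \cap \fp) + \m I}{\m I} \sub U.
\]
Each $W_\fp$ is a proper $k$-subspace: if $W_\fp = U$ then $I = (I\cap\fp) + \m I$, and applying Nakayama's lemma to the finitely generated module $M = I/(I\cap \fp)$ (for which the displayed equation reads $M = \m M$) yields $M = 0$, i.e.\ $I = I \cap \fp$, so $I \sub \fp$, contradicting the previous paragraph. The point of this subspace is the implication: if $a \in I$ has image $\ov{a} \notin W_\fp$ then $a \notin \fp$; hence if $\ov a \notin \bigcup_{\fp \in \Ass A} W_\fp$ then $a \notin \bigcup_\fp \fp$, and so $a$ is $A$-regular.

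It therefore suffices to produce a basis $\ov{a_1}, \ldots, \ov{a_s}$ of $U$ (where $s = \mu(I)$) with each $\ov{a_i} \notin \bigcup_{\fp \in \Ass A}W_\fp$; lifting such a basis to $a_1, \ldots, a_s \in I$ gives a minimal generating set by Nakayama, and each $a_i$ is $A$-regular by the previous step. To build the basis I would proceed one vector at a time, in the spirit of the construction sketched in \ref{const-sup}. Having chosen linearly independent $\ov{a_1}, \ldots, \ov{a_i}$ (for $i < s$), each avoiding $\bigcup_\fp W_\fp$, I observe that the span $\langle \ov{a_1}, \ldots, \ov{a_i}\rangle$ together with the finitely many $W_\fp$ is a finite collection of proper subspaces of $U$; since $k$ is infinite, $U$ is not a union of finitely many proper subspaces, so I may pick $\ov{a_{i+1}}$ outside all of them. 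This keeps the list linearly independent (the new vector is outside the span of the old ones) and $A$-regular, and the induction terminates at a full basis.

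The only real content beyond bookkeeping is this last step, and the crucial hypothesis there is that the residue field $k$ is infinite: it is precisely what allows me to avoid all the proper subspaces $W_\fp$ and the span of the previously chosen vectors simultaneously while maintaining linear independence. I expect this to be the main (though entirely standard) obstacle; the reduction to it via Nakayama and the characterization of regular elements as those avoiding $\bigcup_{\fp \in \Ass A}\fp$ is routine.
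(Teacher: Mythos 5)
Your proof is correct, but it takes a genuinely different route from the paper's. The paper deduces the proposition from a separate lemma (its \ref{generic-sup}) producing a minimal generating set consisting of $I$-\emph{superficial} elements: the proper subspaces of $I/\m I$ to be avoided there come from the associated primes of the associated graded ring $G_I(A)$ that do not contain $G_+$, and the hypothesis $\height I>0$ (hence $\grade I>0$, as $A$ is \CM) is then invoked only at the end to conclude that every $I$-superficial element is $A$-regular. You instead avoid the subspaces $W_\fp=((I\cap\fp)+\m I)/\m I$ attached directly to the finitely many (all minimal, since $A$ is \CM) associated primes of $A$; this is more elementary, bypasses the associated graded ring entirely, and is exactly tailored to the conclusion actually claimed. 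The trade-off is that your generators are only guaranteed to be $A$-regular, whereas the paper's are moreover $I$-superficial --- an extra property that is not part of the statement and is not used at the places where Proposition \ref{generic} is cited (the proofs of Theorems \ref{first} and \ref{second} only need $A$-regularity). Both arguments share the same skeleton: Nakayama to see that the relevant subspaces of $I/\m I$ are proper, and the infinitude of the residue field to escape a finite union of proper subspaces while building a basis one vector at a time.
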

To prove this result  we need the following:
\begin{lemma}\label{generic-sup}
Let $(A,\m)$ be a Noetherian local ring with infinite residue field. Let $I \subseteq \m$ be a non-zero ideal. Then there exists $a_1, \ldots, a_s \in I$ such that
\begin{enumerate}[\rm (1)]
  \item $I = (a_1, \ldots, a_s)$ (minimally).
  \item Each $a_i$ is $I$-superficial.
\end{enumerate}
\end{lemma}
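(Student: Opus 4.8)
The plan is to reduce the statement to a purely linear-algebraic fact over an infinite field, via the superficiality criterion already recorded in \ref{const-sup}. Recall from that sketch that, writing $G = G_I(A)$ and letting $P_1, \ldots, P_r$ be the (finitely many, since $G$ is Noetherian) associated primes of $G$ with $P_i \nsupseteq G_+$, each $V_i = (P_{i,1} + \m I)/\m I$ is a \emph{proper} $k$-subspace of $U = I/\m I$, and any $x \in I$ whose image $\overline{x} \in U$ lies outside $\bigcup_{i=1}^r V_i$ is $I$-superficial. Since $I \neq 0$ we have $U \neq 0$, say $\dim_k U = s \geq 1$. By Nakayama's lemma a family $a_1, \ldots, a_s \in I$ minimally generates $I$ exactly when $\overline{a}_1, \ldots, \overline{a}_s$ is a $k$-basis of $U$. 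Thus it suffices to produce a $k$-basis of $U$ whose members all lie in $U \setminus \bigcup_{i=1}^r V_i$, and then lift it arbitrarily to elements of $I$.

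First I would isolate the linear algebra: over an infinite field $k$, a finite-dimensional space $U$ is never the union of finitely many proper subspaces. In a minimal expression $U = \bigcup_j U_j$ (so $U_1 \not\subseteq \bigcup_{j \geq 2} U_j$), pick $x \in U_1 \setminus \bigcup_{j \geq 2} U_j$ and $y \notin U_1$; among the infinitely many $y + \lambda x$ with $\lambda \in k$, none lies in $U_1$, so two of them lie in a common $U_j$ with $j \geq 2$, whence their difference forces $x \in U_j$, a contradiction.

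Second, I would build the desired basis one vector at a time. Suppose linearly independent $\overline{a}_1, \ldots, \overline{a}_{j-1} \in U \setminus \bigcup_i V_i$ have been chosen with $j - 1 < s$. Their span $\langle \overline{a}_1, \ldots, \overline{a}_{j-1} \rangle$ is a proper subspace of $U$, so $\bigl(\bigcup_{i=1}^r V_i\bigr) \cup \langle \overline{a}_1, \ldots, \overline{a}_{j-1} \rangle$ is again a finite union of proper subspaces; by the fact above it is not all of $U$, so I may choose $\overline{a}_j$ outside it. Then $\overline{a}_j \notin \bigcup_i V_i$ and $\overline{a}_1, \ldots, \overline{a}_j$ remain linearly independent. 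After $s$ steps I obtain the required basis, and lifting each $\overline{a}_i$ to some $a_i \in I$ yields a minimal generating set $I = (a_1, \ldots, a_s)$ with each $a_i$ being $I$-superficial by the criterion of \ref{const-sup}.

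The only delicate point is the bookkeeping in the inductive step: one must enlarge the list of subspaces to avoid by the running span, so that the chosen vectors are simultaneously forced to be superficial \emph{and} linearly independent. This is exactly where infiniteness of $k$ is essential, and it is the crux of the argument; the remaining verifications (finiteness of $r$, properness of the $V_i$, and the Nakayama correspondence between bases of $U$ and minimal generating sets of $I$) are routine.
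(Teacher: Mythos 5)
Your proposal is correct and follows essentially the same route as the paper: both iteratively choose $a_j$ whose image in $U = I/\m I$ avoids the union of the superficiality subspaces $V_i$ from \ref{const-sup} together with the span of the previously chosen generators, the only cosmetic difference being that you count steps via $\dim_k U$ and make explicit the fact that a vector space over an infinite field is not a finite union of proper subspaces, while the paper phrases the termination via Noetherianness. No gap.
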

We prove Proposition \ref{generic} assuming Lemma \ref{generic-sup}.
\begin{proof}[Proof of Proposition \ref{generic}]
As $A$ is \CM \ we have $\grade Q  = \height Q$ for any ideal $Q$ in $A$. So $\grade I$ is positive. By Lemma \ref{generic-sup} there exists $a_1, \ldots, a_s \in I$ such that
 $I = (a_1, \ldots, a_s)$ (minimally) and
  each $a_i$ is $I$-superficial. As $\grade I > 0$ each $I$-superficial element $a_i$ is $A$-regular
\end{proof}
It remains to give
\begin{proof}[Proof of Lemma \ref{generic-sup}]
  Let $U = I/\m I$. By \ref{const-sup} there exists proper $k$-subspaces $V_i$ of $U$ (with $1\leq i \leq r$) such that if $x \in I$  with
$\ov{x} \in U \setminus \cup_{i = 1}^{r} V_i$ is an $I$-superficial element.

Let $a_1 \in I$ such that $\ov{a_1} \in U \setminus \cup_{i = 1}^{r} V_i$.
If $(a_1) = I$ then we are done. Otherwise $W_1 = ( (a_1) + \m I)/\m I$ is a proper subspace of $U$. Choose $a_2 \in I$ such that $\ov{a_2} \in U \setminus (\cup_{i = 1}^{r} V_i) \cup W_1$.
Then $a_2$ is $I$-superficial and $(a_1) \subsetneq (a_1, a_2) \subseteq I$. Iterating we obtain a sequence
$$(a_1) \subsetneq (a_1, a_2)  \subsetneq \cdots \subsetneq (a_1, \ldots, a_s) \subseteq I$$ such that each $a_i$ is $I$-superficial. If $I = (a_1, \ldots, a_s)$ then we are done.
Otherwise $W_s =  ( (a_1, \ldots, a_s) + \m I)/\m I$ is a proper subspace of $U$. Choose $a_{s+1} \in I$ such that $\ov{a_{s+1}} \in U \setminus (\cup_{i = 1}^{r} V_i) \cup W_s$.
Then $a_{s +1}$ is $I$-superficial and $ (a_1, \ldots, a_s) \subsetneq (a_1, \ldots, a_s, a_{s+1}) \subseteq I$. This process will terminate as $A$ is Noetherian. The result follows.
\end{proof}


\begin{thebibliography} {99}

\bibitem{A}
M.~Andr\'e,
\emph{Homologie des alg\`ebres commutatives},
Die Grundlehren der mathematischen Wissenschaften,
Band 206,
Springer-Verlag, Berlin-New York, 1974.

\bibitem{A-m}
 \bysame,
\emph{Alg\`ebres gradu\'ees associ\'ees et alg\`ebres sym\'etriques plates},
Comment. Math. Helv., 49, 1974, 277-301

\bibitem{Av}
L.~L.~Avramov,
\emph{Locally complete intersection homomorphisms and a conjecture of Quillen on the vanishing of cotangent homology},
Ann. of Math. (2), 150, (1999), no. 2, 455--487.


\bibitem {BH}  W. Bruns and J. Herzog,
Cohen-Macaulay Rings, revised edition,
Cambridge Studies in Advanced Mathematics, 39.
Cambridge University Press, 1998.

\bibitem{GL}
T.~H.~Gulliksen and G.~Levin,
\emph{Homology of local rings},
Queen's Papers in Pure and Appl. Math., No. 20
Queen's University, Kingston, ON, Canada.

\bibitem{H}
J.~Herzog, B.~Briggs and S.~B.~Iyengar,
\emph{Homological properties of the module of differentials},
arXiv:2502.14159.

\bibitem{Hir}
H.~Hironaka,
 \emph{Resolution of singularities of an algebraic variety over a field of characteristic zero, I, II},
  Ann. of Math. 79 (1964) 109--326.

\bibitem{HS}
C.~Huneke and  I.~Swanson,
\emph{Integral closure of ideals, rings, and modules},
London Math. Soc. Lecture Note Ser., 336
Cambridge University Press, Cambridge, 2006.

\bibitem{I}
S.~B.~Iyengar,
\emph{André-Quillen homology of commutative algebras},
Interactions between homotopy theory and algebra, 203–234.
Contemp. Math., 436
American Mathematical Society, Providence, RI, 2007.

\bibitem{MR}
J.~Majadas and A.~G.~Rodicio,
\emph{Smoothness, regularity and complete intersection},
London Math. Soc. Lecture Note Ser., 373,
Cambridge University Press, Cambridge, 2010.


\bibitem{Mat}
H.~Matsumura, \emph{Commutative ring theory}, second ed., Cambridge
  Studies in Advanced Mathematics, vol.~8, Cambridge University Press,
  Cambridge, 1989, Translated from the Japanese by M. Reid.


\bibitem{Pl-96}
F.~Planas-Vilanova,
\emph{Vanishing of the {A}ndr\'e-{Q}uillen homology module {$H_2(A,B,G(I))$}},
Comm. Algebra, 24, 1996, Vol 8, 2777--2791.

\bibitem{Pl-15}
\bysame,
\emph{The relation type of affine algebras and algebraic varieties},
J. Algebra, 441, (2015), 166--179.

\bibitem{Pl-21}
\bysame,
Noetherian rings of low global dimension and syzygetic prime ideals,
J. Pure Appl. Algebra 225 (2021), no. 2, Paper No. 106494, 8 pp.

\bibitem{Pl-22}
\bysame,
\emph{Regular local rings of dimension four and Gorenstein syzygetic prime ideals},
J. Algebra, 601 (2022), 105--114.

\bibitem{PTJ}
T.~J.~Puthenpurakal,
\emph{Itoh's conjecture for normal ideals},
Math. Z. 307 (2024), no. 1, Paper No. 9, 24 pp.

\bibitem{PTJ-2}
T.~J.~Puthenpurakal,
\emph{A solution to Itoh's conjecture for integral closure filtration},
Math. Z. 311 (2025), no. 3, Paper No. 65, 20 pp.

\bibitem{Q}
D.~Quillen,
\emph{On the (co)-homology of commutative rings},
 in Applications of Categorical Algebra (New York, 1968),
 Proc. Sympos. Pure Math. 17, A.M.S., Providence, RI, 1970, 65–87.

\bibitem{V}
W.~V.~Vasconcelos,
\emph{Arithmetic of blowup algebras},
London Math. Soc. Lecture Note Ser., 195
Cambridge University Press, Cambridge, 1994

\end{thebibliography}
\end{document}